\numberwithin{equation}{section}
\theoremstyle{plain}
\newtheorem{theorem}{Theorem}[section]
\newtheorem{corollary}{Corollary}[section]
\newtheorem{proposition}{Proposition}[section]
\newtheorem{remark}{Remark}[section]
\newtheorem{example}{Example}[section]
\newtheorem{definition}{Definition}[section]
\newcommand{\cd}{\ensuremath{\rightsquigarrow}}
\newcommand{\X}{\mathfrak{X}}
\newcommand{\x}{\mathbf{x}}
\newcommand{\sgn}{\mbox{\rm sgn}}
\newcommand{\R}{\ensuremath{\mathbb{R}}}
\DeclareMathOperator*{\esssup}{ess\,sup}
\DeclareMathOperator*{\amp}{amp}
\newcommand{\D}{\ensuremath{\mathcal{D}}}
\def \d{\text{\rm d}}
\def \Emp{\mathbb{E}}
\def \E{\text{\rm E}}
\def \P{\text{\rm P}}
\def \Q{\text{\rm Q}}
\DeclareMathOperator*{\card}{Card}
\newcommand\inp[2]{\langle #1, #2 \rangle}
\def \B{\mathbb{B}}
\definecolor{violet}{rgb}{0.7,0,0.6}
\begin{document}

\begin{frontmatter}
\title{Directional differentiability for supremum-type functionals: statistical applications}
\runtitle{Directional differentiability for supremum-type functionals}

\begin{aug}
\author{\fnms{Javier} \snm{C\'{a}rcamo}\thanksref{e1}\ead[label=e1,mark]{javier.carcamo@uam.es}}
\author{\fnms{Antonio} \snm{Cuevas}\thanksref{e2}\ead[label=e2,mark]{antonio.cuevas@uam.es}}
\and
\author{\fnms{Luis-Alberto} \snm{Rodr\'{i}guez}\thanksref{e3}%
\ead[label=e3,mark]{luisalberto.rodriguez@uam.es}%
\ead[label=u1,url]{www.foo.com}}

\address{Departamento de Matem\'{a}ticas, Universidad Aut\'{o}noma de Madrid, 28049 Madrid (SPAIN)
\printead{e1,e2,e3}}


\runauthor{J. C\'arcamo et al.}

\affiliation{Universidad Aut\'{o}noma de Madrid}

\end{aug}

\begin{abstract}
We show that various functionals related to the supremum of a real function defined on an arbitrary set or a measure space are Hadamard directionally differentiable. We specifically consider the supremum norm, the supremum, the infimum, and the amplitude of a function. The (usually non-linear) derivatives of these maps adopt simple expressions under suitable assumptions on the underlying space.
As an application, we improve and extend to the multidimensional case the results in \cite{Raghavachari} regarding the limiting distributions of Kolmogorov-Smirnov type statistics under the alternative hypothesis. Similar results are obtained for analogous statistics associated with copulas. We additionally solve an open problem about the Berk-Jones statistic proposed by \cite{Jager-Wellner-2004}. Finally, the asymptotic distribution of maximum mean discrepancies over Donsker classes of functions is derived.
\end{abstract}

\begin{keyword}
\kwd{Berk-Jones statistic}
\kwd{copulas}
\kwd{Delta method}
\kwd{empirical processes}
\kwd{Hadamard directional derivative}
\kwd{Kolmogorov distance}
\kwd{Kolmogorov-Smirnov statistic}
\kwd{Kuiper statistic}
\kwd{maximum mean discrepancy}
\end{keyword}

\end{frontmatter}

\section{Introduction} \label{Section.Introduction}

\textbf{\textsl{The general framework.}} The supremum or uniform norm has been systematically used in statistics to quantify the deviation between an observed phenomenon and a theoretical model.
A well-known case is the goodness-of-fit problem, where the Kolmogorov distance (i.e., the uniform distance between distribution functions) is one of the main tools to carry out the testing procedures.
In this context, the prototypical example is the Kolmogorov-Smirnov test in which the supremum norm of the difference between the empirical distribution function of the sample and the reference distribution function is employed.
The sup-norm has also been notably considered in the literature of almost all fields of statistics such as robustness, density estimation, regression and classification, among others. The reason for the extensive use of this distance might rely on different factors: it has a clear and simple interpretation; it takes into account the global behaviour of the functions; and, in general, it is easy to compute.

The aim of this work is to discuss the (directional) differentiability of the supremum norm --as well as various related functionals that commonly appear in statistics-- viewed as a real functional from the space of bounded functions defined on an arbitrary set or a measure space. We consider the supremum norm, the supremum, the infimum, and the amplitude of a real function. As an application, we use an extended version of the functional Delta method to derive the asymptotic distribution of many statistics that can be expressed in terms of these maps. In this way, we provide a simple and unified approach and the appropriate framework to deal with such type of statistics.

\medskip

\noindent \textbf{\textsl{The problem under study.}} Throughout this work, $\X$ is a  nonempty set and $\ell^\infty(\X)$ is the real Banach space of bounded functions $f:\X\longrightarrow\R$, equipped with the supremum norm, $\Vert f \Vert_\infty:=\sup_{x\in \X}|f(x)|$. 
If additionally $(\X,\mathcal{A},\mu)$ is a measure space, where $\mathcal{A}$ is a $\sigma$-algebra and $\mu$ a positive measure, we denote by $\ell^\infty(\X,\mathcal{A},\mu)$ the set of classes of equivalence of measurable and essentially bounded functions $f:\X\longrightarrow\R$ with the norm $\Vert f\Vert_{\ell^\infty(\mu)}:=\esssup_{x\in \X} |f(x)|$, where
\begin{equation*}
\esssup_{x\in \X} f:= \inf\{  C\in\R : \mu(\{ x\in \X  : f(x) >C \})=0  \}.
\end{equation*}

Important examples of this general setting are $\X=\R^d$ or $\bar\R^d$ ($d\ge 1$), with $\bar{\R}\equiv[-\infty,+\infty]$ the extended real line, and $\X=\X$, a class of real functions. To avoid unnecessary repetitions, unless specifically mentioned, from now on we will only consider the supremum.

For $q\in\ell^\infty(\X)$, the quantity of interest that we want to estimate is  $\phi(q)$, where $\phi$ is any of the following functionals:
\begin{equation}\label{funtionals}
\delta(f):= \Vert f\Vert_\infty,\quad \sigma (f):=\sup_{\X} f,\quad \iota(f):=\inf_{\X} f, \quad \text{and}\quad \alpha (f):=\amp_{\X} f,\quad f \in\ell^\infty(\X),
\end{equation}
with $\amp_{\X} f:=\sup_{\X} f-\inf_{\X} f$ (the amplitude of the function $f$).

We will assume that $q$ can be estimated by $\mathbb{Q}_n$, a random element taking values in $\ell^\infty(\X)$ a.s. satisfying
\begin{equation}\label{weak}
r_n(\mathbb{Q}_n-q)\rightsquigarrow \mathbb{Q}\quad   \text{in } \ell^\infty(\X),\quad \text{as } n\to\infty,
\end{equation}
where $r_n$ is a sequence of real numbers such that $r_n\to\infty$, $\mathbb{Q}$ is a tight Borel random variable in $ \ell^\infty(\X)$, and we use the arrow `$\rightsquigarrow$' to denote the weak convergence of probability measures in the sense of Hoffmann-J{\o}rgensen (see \cite{van der Vaart-Wellner}). The scaling $r_n$ usually goes to infinity as the square root of $n$, but its behaviour could be different in some examples.
In \cite{van der Vaart-Wellner} the theory of weak convergence is developed for a net of probability spaces, that is, a family of spaces indexed by a \emph{directed set}. We recall that a directed set $A$ is a non-empty set with a partial order relation `$\preceq$' satisfying that for every $a,b\in A$, there is $c\in A$ such that $a\preceq c$ and $b\preceq c $. The results obtained in this paper could also be stated in terms of nets. Nevertheless, this generalization is not relevant for the applications considered in this work and it will not be considered in what follows.

For $\phi\in\{\delta,\sigma,\iota, \alpha\}$ in (\ref{funtionals}), we are interested in analyzing the asymptotic behaviour of the normalized estimator of $\phi(q)$, that is, the statistic given by
\begin{equation}\label{problem}
D_n(\phi)\equiv D_\phi (q, \mathbb{Q}_n,r_n):=r_n(\phi(\mathbb{Q}_n)-\phi(q)).
\end{equation}


\medskip

\noindent \textbf{\textsl{Background.}} By the continuous mapping theorem, when $q=0$ (the null function), the weak convergence in (\ref{weak}) directly implies that $D_n(\phi) \cd \phi (\mathbb{Q})$. (Note that in this case `$\cd$' is the usual convergence in distribution of random variables.) This situation often corresponds to the case in which $D_n(\phi)$ is a normalized discrepancy --usually measured in terms of the sup-norm-- for testing the null hypothesis $\text{H}_0:$~$q=0$.
In this setting, the limiting behaviour of $D_n(\phi)$ if $q\ne 0$ provides information regarding the asymptotic power of the underlying testing procedure. The classical result on the asymptotic distribution of the Kolmogorov-Smirnov statistic under the null hypothesis (see, e.g., \cite{van der Vaart}) is a well-known example. It is also worth mentioning the usefulness of this approach for testing composite null
hypotheses such as $\text{H}_0 : q\le 0$. In this case, the limiting behavior of $D_n(\phi)$  when $q\ne 0$ provides information about both asymptotic power (when $q\nleq0$) and asymptotic null behavior (when $q\le0$ and $q=0$).
In \cite{Beare-Moon}, \cite{Seo} and \cite{Beare-Shi}, the focus is on asymptotic null behavior.

Finding the asymptotic distribution of $D_n(\phi)$ in (\ref{problem}) when $q$ is not identically zero is a more challenging problem. So far, this problem has been tackled generally for the sup-norm and some particular choices of the function $q$. To the best of our knowledge, the first remarkable result in this direction was obtained by \cite{Raghavachari}. This author found the asymptotic distribution of the normalized version of the plug-in estimator of $\phi(F-G)$ (for $\phi\in\{\delta,\sigma,\alpha\}$) in the one-sample and two-sample cases when $F$ and $G$ are continuous univariate distribution functions. The results in \cite{Raghavachari} have also been summarized in \citet[Chapter 26]{DasGupta}\color{black}. Over the years, the ideas in \cite{Raghavachari} have been used and replicated by several authors to obtain different results in similar settings.
A non-exhaustive list of these references is:  \cite{Alvarez-Esteban-2012}, \cite{Alvarez-Esteban-2016}, \cite{Freitag-Lange-Munk}, \cite, \cite{Schmoyer}, among others. 
In \cite{Genest-Neslehova}, the authors discussed a test of radial symmetry for copulas in which the key element is the estimation of $\Vert C-\bar C\Vert_\infty$, where $C$ is a bivariate copula and $\bar C$ is its survival copula. \cite{Dette-Volgushev-Bretz} used the same technique to find the asymptotic distribution of the estimator of $\Vert m_1(\beta_1)-m_2(\beta_2) \Vert_\infty$, where $m_1(\beta_1)$ and $m_2(\beta_2)$ are regression functions with parameters $\beta_1$ and $\beta_2$, respectively. In \citet[Theorem 6.1]{Dette-Kokot-Aue} a result in the same spirit as \cite{Raghavachari} is obtained for convergence of suprema of non-centered processes indexed by directed sets (see Remark \ref{Remark.Dette}).


\medskip

\noindent \textbf{\textsl{The proposed methodology.}} In all the previous references the approach used to compute the limiting distributions is based on the direct probabilistic analysis of the considered statistics. For instance, the proofs in \cite{Raghavachari} are essentially based on a careful analysis of the behaviour of the empirical process in the set of points around which the supremum in $\Vert F-G\Vert_\infty$ is attained. However, we explore here an alternative, more general, approach. It is based on the idea that the statistics in (\ref{problem}) have indeed the usual form, suitable to apply the functional Delta method.
Therefore, in light of (\ref{problem}), a direct and intuitive approach to find the asymptotic distribution of $D_n(\phi)$ could be to analyze the differentiability of the maps in (\ref{funtionals}) and use the functional Delta method.  In fact, as it will become evident in this work, looking at the behaviour and analytic properties of the underlying functional is much more enlightening than working directly with the probability distribution of the statistic.


Though there are many possible ways of defining the concept of differentiability for maps between metric or normed spaces, Hadamard differentiability is perhaps the most convenient in this context as it is appropriate for applying the functional Delta method (see \citet[Section 20]{van der Vaart}). However, there are many important examples of maps which are \textit{not} Hadamard differentiable. This is the case of  the functionals in (\ref{funtionals}), which are clearly continuous but non-differentiable. Despite not being fully differentiable, we will show that these maps are \textit{Hadamard directionally differentiable}.
This weaker notion of differentiability was introduced by \cite{Shapiro-1990}. \cite{Shapiro-1991} and \cite{Dumbgen} (see also \cite{Romisch}) independently showed that the Delta method still holds for directional differentiable maps. Recently, this idea has been successfully exploited in the econometric literature; see \cite{Beare-Moon}, \cite{Kaido}, \cite{Seo}, and \cite{Beare-Shi}. \cite{Fang-Santos} illustrate in depth the applicability of the directional differentiability to a wide variety of problems in econometrics. See additionally \cite{Beare-Fang} and \cite{Sommerfeld-Munk}.


\medskip

\noindent\textbf{\textsl{Structure and main results.}} 
In Section \ref{Section.Main} we give the necessary definitions, prove that the maps in (\ref{funtionals}) are Hadamard directional differentiable and determine their 
derivatives under very general assumptions. In particular, this implies that an extended version of the functional Delta method can be applied for these mappings.
As far as the authors know, in the statistical community the Hadamard directional differentiability of the infimum under no additional conditions on the underlying space was first obtained by \citet[Proposition 1]{Romisch}, after a personal communication of P. Lachout in 2004. \citet[Lemma S.4.9]{Fang-Santos} also obtained an expression for the Hadamard directional derivative of the supremum for continuous functions defined on a compact metric space.

 In Section \ref{Section.Main}, besides reviewing the different notions of differentiability and the Delta method, we also obtain several original results.
\begin{enumerate}
\item[(a)] Theorem \ref{Theorem.differentiability} in Section \ref{Subsection.General.Result} 
follows the spirit of \citet[Proposition 1]{Romisch} though our proof is slightly different and we include the supremum norm (not covered in \cite{Romisch}). In the rest of Section \ref{Section.Main} we rely on this result to obtain simplified expressions for the derivatives of the mappings in (\ref{funtionals}) when the space $\X$ is endowed with additional structure.

\item[(b)] In Section \ref{Subsection.Compact.Spaces} we assume that $\X$ is a compact metric space. The main novelty here is that the involved functions are \textit{not} required to be continuous (and we continue to deal with the supremum norm). \citet[Lemma S.4.9]{Fang-Santos} is obtained as a particular case. 

\item[(c)] In Sections \ref{Subsection.Totally.bounded} and \ref{Subsection.Weakly.compact} we consider the case in which $\X$ is a totally bounded me\-tric space and a weakly compact subset of a Banach space, respectively. To the best of our knowledge, the corresponding differentiability results are new in the literature.

\item[(d)] In Section \ref{Subection.Skorohod.Space} we analyze in detail the situation in which $\X=\bar \R^d$ and the functions belong to $\D(\bar\R^d)\equiv$ the extension of the Skorohod space in $[0,1]^d$ (introduced in \cite{Neuhaus}) to the whole $\bar\R^d$. The space $\D(\bar\R^d)$ is an important subspace of $\ell^\infty (\bar\R^d)$ as it includes the paths of many well-known stochastic processes with jumps in their paths such as multivariate empirical processes. Hence, the functions in $\D(\bar\R^d)$ are not necessarily continuous and the expressions of the derivatives of the maps are new.
\end{enumerate}

The versatility of the proposed methodology is illustrated in depth in Sections \ref{Section.Distribution.Functions}-\ref{Setion.Maximum.mean.discrepancies}, where we derive the asymptotic distribution of various statistics with no additional effort. We base the results on the directional differentiability of the functionals and the weak convergence of the underlying stochastic processes. Hence, this unifying approach allows us to reduce a usually difficult statistical problem to a much simpler analytical question related to the directional differentiability of the corresponding functional. Using these ideas, we obtain the following applications: In Section \ref{Section.Distribution.Functions} we extend and give simpler and shorter proofs of the results in \cite{Raghavachari} both in the one-sample and two-sample cases. The extension is carried out in different directions. Firstly, no assumption on the involved distribution functions is necessary to derive the asymptotic results. In contrast, in \cite{Raghavachari} the continuity of the distribution functions is required. Secondly, the results are obtained in a multidimensional setting. We note that the proofs are very simple (compared with those in \cite{Raghavachari}) because they just rely on the analysis of the differentiability of the functionals and the convergence of the associated processes separately. It should be further remarked that those works that have used the results and ideas in \cite{Raghavachari} were forced to impose the continuity of the involved functions as an assumption in their statements; see for instance \citet[Equation (11)]{Alvarez-Esteban-2016}, \citet[Section 2]{Freitag-Lange-Munk} or \citet[Assumption 7.4.]{Dette-Volgushev-Bretz}.
The regularity limitation of working with continuous functions is not mathematically aesthetic and it is in fact unnecessary, as we will show in this paper. Moreover, in Section \ref{Section-Copula} we will extend these results to copulas. Also, in Section \ref{Setion.Jager.Weller}, we apply this technique to solve an open question by \cite{Jager-Wellner-2004} related to the Berk-Jones statistic. Finally, in Section \ref{Setion.Maximum.mean.discrepancies} we derive the asymptotic distribution for the plug-in estimators of maximum mean discrepancies with respect to a Donsker class.

The main results of this paper can also be applied to find the asymptotic distribution of the empirical risk over Donsker classes of functions and estimators of kernel distances. These applications are not included in the present paper due to the limited space available and they will be developed in future works.


\section{Main results}\label{Section.Main}

In this section we introduce the definitions of directional differentiability of maps between Banach spaces, recall an extended version of the Delta method for these mappings, and discuss the analytic properties of the functionals introduced in Section \ref{Section.Introduction} according to the mathematical structure of $\X$.

\subsection{Directional differentiability and the Delta method}\label{Subsection.Differentiability}

In many situations it is common to face the problem of estimating a transformation, $\phi(\theta)$, of a (possibly infinite-dimensional) parameter $\theta$. Typically, $\theta$ is unknown but can be estimated by means of $T_n$ and $\phi$ is a map defined in a metric space. If $\phi$ is smooth enough in a local neighborhood of $\theta$ --for instance, differentiable at $\theta$ in a precise sense-- the asymptotic distribution of (the normalized version of) $\phi(T_n)$ can be determined by expanding $\phi$ around $\theta$ and using an invariance principle for $T_n$ in the underlying metric space. Of course, this is the key idea behind the \textit{(functional) Delta method}, one of the most frequently used methodologies in statistics to compute the limiting distribution of an estimator of a quantity of interest (see \citet[Section 3.9]{van der Vaart-Wellner}). This technique is specially fruitful when dealing with the popular plug-in estimators, which, by construction, are functions of the empirical distribution function of the observed sample. In such cases, the powerful theory of weak convergence of empirical processes provides the suitable mathematical machinery to determine the asymptotic behaviour of this kind of estimators (see \cite{Gine-Nickl}).

We start with the notion of G\^{a}teaux directional differentiability.

\begin{definition}\label{Definition.Gateaux}
Let $\mathcal{D}$ and $\mathcal{E}$ be real Banach spaces with norms $\Vert \cdot\Vert_\mathcal{D}$ and $\Vert \cdot\Vert_\mathcal{E}$, respectively. A map $\phi:\mathcal{D}\longrightarrow \mathcal{E}$ is said to be {\rm G\^{a}teaux directionally differentiable} at $\theta\in \mathcal{D}$ tangentially to a set $\mathcal{D}_0\subset \mathcal{D}$ if there exists a map $\phi^\prime_\theta: \mathcal{D}_0 \longrightarrow \mathcal{E}$ such that
\begin{equation}\label{Gateaux}
\left\Vert  \frac{\phi(\theta+t_n h)-\phi(\theta)}{t_n}-\phi^\prime_\theta(h)\right\Vert_\mathcal{E}\to 0,
\end{equation}
for all $h\in \mathcal{D}_0$ and all sequences $\{t_n\}\subset \R$ such that $t_n\downarrow 0$.
\end{definition}

It is well-known that G\^{a}teaux differentiability is too weak for the Delta method to hold. To solve this problem, the directions along which we approach to $\phi(\theta)$ in (\ref{Gateaux}) have to be allowed to change with $n$. This naturally leads to the concept of Hadamard directional differentiability. We follow \cite{Shapiro-1990} for the next definition.

\begin{definition}\label{Definition.Hadamard}
In the context of the previous definition, we say that $\phi:\mathcal{D}\longrightarrow \mathcal{E}$ is  {\rm Hadamard directionally differentiable} at $\theta\in \mathcal{D}$ tangentially to a set $\mathcal{D}_0\subset \mathcal{D}$ if there exists a map $\phi^\prime_\theta: \mathcal{D}_0 \longrightarrow \mathcal{E}$ such that
\begin{equation}\label{Hadamard}
\left\Vert  \frac{\phi(\theta+t_n h_n)-\phi(\theta)}{t_n}-\phi^\prime_\theta(h)\right\Vert_\mathcal{E}\to 0,
\end{equation}
for all $h\in \mathcal{D}_0$ and all sequences $\{t_n\}\subset \R$ and $\{h_n\}\subset \mathcal{D}$ such that $t_n\downarrow 0$ and $\Vert h_n-h\Vert_\mathcal{D}\to 0$.
\end{definition}

Obviously, the Hadamard directional differentiability condition \eqref{Hadamard} is stronger than the G\^{a}teaux notion \eqref{Gateaux}. The only difference between the directional and the usual differentiability is that the derivative $\phi^\prime_\theta$ is no longer required to be linear in Definitions \ref{Definition.Gateaux} and \ref{Definition.Hadamard}. Nevertheless, if equation (\ref{Hadamard}) is satisfied, then $\phi^\prime_\theta$ is continuous and positive homogeneous of degree 1 (see \citet[Proposition 3.1]{Shapiro-1990}).

\begin{remark}\label{Remark.Gateaux.Hadamard}
If $\phi$ is as in Definitions \ref{Definition.Gateaux}, and additionally $\phi$ is locally Lipschitz, i.e., there exists a constant $C>0$ such that $\Vert \phi(f)-\phi(g)\Vert_{\mathcal{E}}\le C \Vert f-g\Vert_{\mathcal{D}}$, for all $f,g\in \mathcal{D}$ in a neighborhood of each point of $\mathcal{D}$, then Hadamard directional differentiability is equivalent to the G\^{a}teaux one (see \citet[Proposition 3.5]{Shapiro-1990}). This condition is satisfied by $\delta, \sigma, \iota, \alpha: \ell^\infty(\X) \longrightarrow \R$ defined in (\ref{funtionals}). Hence, to check that the maps considered in Section \ref{Section.Introduction} are Hadamard directionally differentiable at $f\in  \ell^\infty(\X)$ we only need to show G\^{a}teaux directional differentiability.
\end{remark}

The important fact about Hadamard directional differentiability is that it is the crucial condition to ensure the validity of the following \textit{extended (functional) Delta method}.

\begin{proposition}\label{Proposition.delta}
Let $\mathcal{D}$ and $\mathcal{E}$ be Banach spaces and $\phi:\mathcal{D}_\phi\subset \mathcal{D} \longrightarrow \mathcal{E}$, where $\mathcal{D}_\phi$ is the domain of $\phi$. Assume that $\phi$ is Hadamard directionally differentiable at $\theta\in \mathcal{D}_\phi$ tangentially to a set $\mathcal{D}_0\subset \mathcal{D}$. For some sample spaces $\Omega_n$, let $T_n:\Omega_n\longrightarrow \mathcal{D}_\phi$ be maps such that $r_n(T_n-\theta)\rightsquigarrow T$, for some sequence of numbers $r_n\to\infty$ and a random element $T$ that takes values in $\mathcal{D}_0$. Then, $r_n( \phi(T_n)-\phi(\theta))\rightsquigarrow \phi^\prime_\theta(T)$. If additionally $\phi^\prime_\theta$ can be continuously extended to $\mathcal{D}$, then we have that $r_n( \phi(T_n)-\phi(\theta))=\phi^\prime_\theta(r_n(T_n-\theta))+o_\P(1)$.
\end{proposition}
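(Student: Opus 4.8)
The plan is to deduce both assertions from the extended continuous mapping theorem (\citet[Theorem 1.11.1]{van der Vaart-Wellner}), which is the natural vehicle once we recognize that Hadamard directional differentiability is \emph{exactly} the convergence hypothesis required by that theorem. Write $X_n:=r_n(T_n-\theta)$, so that $X_n\rightsquigarrow T$ in $\mathcal{D}$ by hypothesis, with $T$ tight and supported on $\mathcal{D}_0$. For each $n$ I would introduce the (possibly non-measurable) maps $g_n:\mathcal{D}\longrightarrow\mathcal{E}$,
\begin{equation*}
g_n(h):=r_n\big(\phi(\theta+r_n^{-1}h)-\phi(\theta)\big),
\end{equation*}
defined on $\{h:\theta+r_n^{-1}h\in\mathcal{D}_\phi\}$; since $T_n$ takes values in $\mathcal{D}_\phi$, the element $X_n$ lies in the domain of $g_n$ and $g_n(X_n)=r_n(\phi(T_n)-\phi(\theta))$. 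Set $g:=\phi^\prime_\theta$.

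The key step is to verify the convergence condition of the extended continuous mapping theorem: for every sequence $h_n\to h$ with $h\in\mathcal{D}_0$, one has $g_n(h_n)\to g(h)$. Taking $t_n:=r_n^{-1}$ we recognize
\begin{equation*}
g_n(h_n)=\frac{\phi(\theta+t_n h_n)-\phi(\theta)}{t_n},
\end{equation*}
so this is precisely \eqref{Hadamard}. A minor technical wrinkle is that $t_n=r_n^{-1}$ need not decrease monotonically, whereas Definition \ref{Definition.Hadamard} is phrased for $t_n\downarrow 0$; this is dispatched by a routine subsequence argument, extracting from any $t_n\to 0^{+}$ a monotone subsequence and using that every subsequence of $g_n(h_n)$ then has a further subsequence converging to $g(h)$. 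With the hypothesis verified, the theorem yields $g_n(X_n)\rightsquigarrow g(T)$, that is $r_n(\phi(T_n)-\phi(\theta))\rightsquigarrow\phi^\prime_\theta(T)$, which is the first assertion.

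For the second assertion, suppose $\phi^\prime_\theta$ admits a continuous extension to all of $\mathcal{D}$, still denoted $\phi^\prime_\theta$. I would apply the extended continuous mapping theorem to the $\mathcal{E}\times\mathcal{E}$-valued maps $h\mapsto(g_n(h),\phi^\prime_\theta(h))$: for $h_n\to h\in\mathcal{D}_0$ the first coordinate converges to $\phi^\prime_\theta(h)$ as above, while the second converges to $\phi^\prime_\theta(h)$ by continuity of the extension. Hence
\begin{equation*}
\big(r_n(\phi(T_n)-\phi(\theta)),\ \phi^\prime_\theta(X_n)\big)\rightsquigarrow\big(\phi^\prime_\theta(T),\phi^\prime_\theta(T)\big).
\end{equation*}
Since subtraction on $\mathcal{E}\times\mathcal{E}$ is continuous, the ordinary continuous mapping theorem shows that the difference $r_n(\phi(T_n)-\phi(\theta))-\phi^\prime_\theta(r_n(T_n-\theta))$ converges weakly to $\phi^\prime_\theta(T)-\phi^\prime_\theta(T)=0$; weak convergence to a constant is equivalent to convergence in (outer) probability, which is the claimed $o_\P(1)$ statement.

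The main obstacle is not the algebra but the measure-theoretic bookkeeping: the maps $g_n$ are generally non-measurable and we work within the Hoffmann--J{\o}rgensen theory of weak convergence, so one cannot naively treat $g_n(X_n)$ as a Borel random variable. This is precisely what the extended continuous mapping theorem is designed to absorb, provided the limit $T$ is tight and supported on $\mathcal{D}_0$ (guaranteed here) and the along-sequences convergence $g_n(h_n)\to g(h)$ holds on $\mathcal{D}_0$. Thus the whole argument hinges on the clean identification of \eqref{Hadamard} with that convergence condition; once this is in place, the continuity of $\phi^\prime_\theta$ (from \citet[Proposition 3.1]{Shapiro-1990}) ensures all limiting objects are well defined and the statement follows.
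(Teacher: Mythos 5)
Your proposal is correct and follows essentially the same route as the paper, which (via Remark \ref{Remark.Proof.Delta}) proves the result by applying the extended continuous mapping theorem to the maps $\phi_n(h)=r_n(\phi(\theta+r_n^{-1}h)-\phi(\theta))$, exactly your $g_n$. Your handling of the non-monotone $t_n=r_n^{-1}$ and the joint-convergence argument for the $o_\P(1)$ claim are the standard details behind the cited references (Shapiro 1991; van der Vaart and Wellner, Theorem 3.9.4) and are sound.
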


\begin{remark}\label{Remark.Proof.Delta}
The detailed proof of Proposition \ref{Proposition.delta} can be found in \citet[Theorem 2.1]{Shapiro-1991} (see also \citet[Theorem 1]{Romisch} or \citet[Theorem 2.1]{Fang-Santos}), but it is essentially the same one as for the traditional Delta method; see \citet[Theorem 20.8]{van der Vaart}. The key idea is to apply the  \textit{extended continuous mapping theorem} (\citet[Theorem 1.11.1]{van der Vaart-Wellner}) to the sequence of functionals defined by $\phi_n(h):=r_n(\phi(\theta+r_n^{-1}h)-\phi(\theta))$, $n\in \mathbb{N}$.
\end{remark}


In the present context, let us assume that $\theta_n\to\theta$ and $r_n(T_n-\theta_n)\rightsquigarrow T$, and we want to determine conditions so that $r_n( \phi(T_n)-\phi(\theta_n))\rightsquigarrow \phi^\prime_\theta(T)$. As it is pointed out in \citet[p. 375]{van der Vaart-Wellner}, a stronger form of differentiability is needed to obtain such a ``uniform" version of the Delta method.
\begin{definition}\label{Definition.Hadamard.uniform}
In the context of Definition \ref{Definition.Gateaux}, we say that $\phi:\mathcal{D}\longrightarrow \mathcal{E}$ is {\rm uniformly Hadamard differentiable} at $\theta\in \mathcal{D}$ tangentially to a set $\mathcal{D}_0\subset \mathcal{D}$ if there exists a map $\phi^\prime_\theta: \mathcal{D}_0 \longrightarrow \mathcal{E}$ such that
\begin{equation*}
\left\Vert  \frac{\phi(\theta_n+t_n h_n)-\phi(\theta_n)}{t_n}-\phi^\prime_\theta(h)\right\Vert_\mathcal{E}\to 0,
\end{equation*}
for all $h\in \mathcal{D}_0$ and all sequences $\{t_n\}\subset \R$, $\{\theta_n\}$, $\{h_n\}\subset \mathcal{D}$ such that $t_n\downarrow 0$, $\Vert \theta_n-\theta\Vert_\mathcal{D}\to 0$, and $\Vert h_n-h\Vert_\mathcal{D}\to 0$.
\end{definition}
If $\phi$ is uniformly Hadamard differentiable at $\theta$, $\theta_n\to\theta$ and $r_n(T_n-\theta_n)\rightsquigarrow T$, we still have that $r_n( \phi(T_n)-\phi(\theta_n))\rightsquigarrow \phi^\prime_\theta(T)$; see \citet[Theorem 3.9.5]{van der Vaart-Wellner}.

\subsection{A general result on Hadamard directional differentiability}\label{Subsection.General.Result}

In the next theorem we show that the maps introduced in Section \ref{Section.Introduction} are directionally differentiable at every function of $\ell^\infty(\X)$, where $\X$ is an arbitrary space. In the sequel $\sgn(\cdot)$ denotes the sign function.

\begin{theorem}\label{Theorem.differentiability}
The maps $\delta$, $\sigma$, $\iota$ and $\alpha$ in (\ref{funtionals}) are Hadamard directionally differentiable at every $f\in \ell^\infty(\X)\setminus \{ 0 \}$. For $g\in\ell^\infty(\X)$, their  derivatives are respectively given by

\begin{equation}\label{derivatives}
\begin{aligned}
\delta^\prime_f(g) & = \lim_{\epsilon\downarrow 0} \sup_{A_\epsilon(|f|)} \left( g \cdot \sgn(f)\right),\quad   &  \sigma^\prime_f(g) &= \lim_{\epsilon\downarrow 0} \sup_{A_\epsilon(f)} g,\\[2 mm]
\iota^\prime_f(g) &= \lim_{\epsilon\downarrow 0}  \inf_{B_\epsilon(f)} g ,\quad   &   \alpha^\prime_f(g) &= \lim_{\epsilon\downarrow 0} \Big(\sup_{A_\epsilon(f)} g -\inf_{B_\epsilon(f)} g \Big),
\end{aligned}
\end{equation}
where, for $\epsilon>0$ and $h\in \ell^\infty(\X)$, $A_\epsilon(h)$ and $B_\epsilon(h)$ are the superlevel and sublevel sets of $h$ defined by
\begin{equation*}
A_\epsilon(h):=\Big\{ x\in \X : h(x)\ge \sup_\X h -\epsilon  \Big\} \quad\text{and}\quad B_\epsilon(h):=\Big\{ x\in \X : h(x)\le \inf_\X h +\epsilon  \Big\}.
\end{equation*}
Moreover, if $(\X,\mathcal{A},\mu)$ is a measure space, the result still holds if we substitute the suprema (respectively infima) by essential suprema (respectively infima) with respect to $\mu$.
\end{theorem}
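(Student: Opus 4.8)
The plan is to exploit Remark~\ref{Remark.Gateaux.Hadamard}: each of the four functionals is globally Lipschitz from $\ell^\infty(\X)$ to $\R$ ($1$-Lipschitz for $\delta,\sigma,\iota$ and $2$-Lipschitz for $\alpha$, directly from the triangle inequality and $|\sup f-\sup g|\le\Vert f-g\Vert_\infty$), so Hadamard directional differentiability reduces to the G\^ateaux notion. Hence it suffices to fix $g\in\ell^\infty(\X)$ and an arbitrary sequence $t_n\downarrow 0$ and prove that the difference quotient $\bigl(\phi(f+t_n g)-\phi(f)\bigr)/t_n$ converges to the expression in \eqref{derivatives}; this simultaneously establishes existence of the derivative. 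I would first record that the right-hand sides are well defined: since $A_\epsilon(f)$ shrinks as $\epsilon\downarrow0$, the map $\epsilon\mapsto\sup_{A_\epsilon(f)}g$ is non-decreasing and bounded by $\Vert g\Vert_\infty$, so its limit exists (and likewise for the infimum and the signed variant). I would carry out $\sigma$ in full and reduce the other three to it.

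For $\sigma$, write $s:=\sup_\X f$ and $L:=\sigma^\prime_f(g)$, and split $\X=A_\epsilon(f)\cup A_\epsilon(f)^c$. On $A_\epsilon(f)^c$ one has $f<s-\epsilon$, so $\sup_{A_\epsilon(f)^c}(f+t_n g)\le s-\epsilon+t_n\Vert g\Vert_\infty$, while on $A_\epsilon(f)$ one has $\sup_{A_\epsilon(f)}(f+t_n g)\le s+t_n\sup_{A_\epsilon(f)}g$. Taking the maximum, dividing by $t_n$ and letting $n\to\infty$ with $\epsilon$ \emph{fixed} kills the term $-\epsilon/t_n+\Vert g\Vert_\infty\to-\infty$, so $\limsup_n\bigl(\sigma(f+t_n g)-s\bigr)/t_n\le\sup_{A_\epsilon(f)}g$; letting $\epsilon\downarrow0$ gives $\limsup\le L$.

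The main obstacle is the matching lower bound, because fixing $\epsilon$ and letting $t_n\to0$ yields the useless estimate $-\epsilon/t_n+\cdots\to-\infty$; the remedy is to let the level parameter shrink together with $t_n$. Concretely, fix $\eta>0$ and choose $\epsilon_n\downarrow0$ with $\epsilon_n/t_n\to0$ (e.g. $\epsilon_n=t_n^2$). Since the monotone quantity $\sup_{A_{\epsilon_n}(f)}g$ decreases to $L$, it is $\ge L$ for every $n$, so there is $x_n\in A_{\epsilon_n}(f)$ with $g(x_n)>L-\eta$ and $f(x_n)\ge s-\epsilon_n$; evaluating the supremum at $x_n$ gives $\sigma(f+t_n g)\ge s-\epsilon_n+t_n(L-\eta)$, whence $\bigl(\sigma(f+t_n g)-s\bigr)/t_n\ge-\epsilon_n/t_n+L-\eta\to L-\eta$. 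Letting $\eta\downarrow0$ yields $\liminf\ge L$, completing the $\sigma$-case.

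The remaining functionals follow formally. Since $\iota(f)=-\sigma(-f)$ and $A_\epsilon(-f)=B_\epsilon(f)$, the identity $\iota^\prime_f(g)=-\sigma^\prime_{-f}(-g)$ reproduces the stated formula for $\iota$. As $\alpha=\sigma-\iota$, the limit of its difference quotient is $\sigma^\prime_f(g)-\iota^\prime_f(g)$ and, both one-sided limits existing, equals $\lim_{\epsilon\downarrow0}\bigl(\sup_{A_\epsilon(f)}g-\inf_{B_\epsilon(f)}g\bigr)$. For $\delta$ I would use $\delta(f)=\max\bigl(\sigma(f),-\iota(f)\bigr)$ together with the expansions just obtained; a short case analysis according to whether $\Vert f\Vert_\infty$ equals $\sigma(f)$, $-\iota(f)$, or both shows that $\bigl(\delta(f+t_n g)-\Vert f\Vert_\infty\bigr)/t_n$ converges to $\sigma^\prime_f(g)$, to $-\iota^\prime_f(g)$, or to $\max\bigl(\sigma^\prime_f(g),-\iota^\prime_f(g)\bigr)$, respectively. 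Since $f\ne0$ forces $\Vert f\Vert_\infty>0$, on $A_\epsilon(|f|)$ the factor $\sgn(f)$ is constantly $+1$, constantly $-1$, or $\pm1$ on its two pieces, and $A_\epsilon(|f|)$ reduces to $A_\epsilon(f)$, to $B_\epsilon(f)$, or to their union, so that $\lim_{\epsilon\downarrow0}\sup_{A_\epsilon(|f|)}(g\cdot\sgn f)$ equals $\sigma^\prime_f(g)$, $-\iota^\prime_f(g)$, or $\max\bigl(\sigma^\prime_f(g),-\iota^\prime_f(g)\bigr)$, matching the three cases and reproducing \eqref{derivatives}. Finally, the measure-space statement is obtained by the identical estimates with $\esssup$ and the corresponding essential infima, the only change being that the point $x_n$ above is replaced by a set of positive $\mu$-measure on which $f\ge s-\epsilon_n$ and $g>L-\eta$, which exists precisely because $\esssup_{A_{\epsilon_n}(f)}g\ge L$.
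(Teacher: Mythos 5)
Your proof is correct, and for the core case $\sigma$ it follows essentially the same route as the paper: the identical upper bound via the split $\X=A_\epsilon(f)\cup A_\epsilon(f)^c$ with $\epsilon$ fixed, and a lower bound obtained by evaluating at near-maximizers of $g$ over shrinking superlevel sets of $f$. The only organizational difference there is that you couple the level parameter to the step size ($\epsilon_n=t_n^2$) and pass to the limit along the sequence, whereas the paper first sends the level parameter to zero at fixed $n$ and thereby obtains the slightly stronger pointwise bound $\sigma'_f(g)\le\sigma_n(f,g)$ for \emph{every} $n$; both arguments are valid. Your treatments of $\iota$ and $\alpha$ by duality coincide with the paper's. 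Where you genuinely diverge is the sup-norm $\delta$: the paper observes that for $\epsilon<\Vert f\Vert_\infty/2$ and $n$ large one has $s_n|f|+\sgn(f)\cdot g\ge 0$ on $A_\epsilon(|f|)$, so that the difference quotient for $\delta$ at $f$ in direction $g$ is eventually \emph{equal} to the one for $\sigma$ at $|f|$ in direction $g\cdot\sgn(f)$, and the formula follows in one line from the $\sigma$ case; you instead write $\delta=\max(\sigma,-\iota)$ and run a three-way case analysis according to whether $\Vert f\Vert_\infty$ equals $\sup f$, $-\inf f$, or both, then verify separately that $\lim_{\epsilon\downarrow0}\sup_{A_\epsilon(|f|)}(g\cdot\sgn f)$ reproduces the same three answers. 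Your route is more elementary and makes the structure of the derivative transparent (it exhibits $\delta'_f$ explicitly as a max of the two one-sided derivatives when the norm is attained on both sides), at the cost of being longer and requiring the small continuity argument that the strict inequality between $\sigma$ and $-\iota$ persists under the perturbation $f+t_ng$; the paper's identification is shorter but hides this case structure inside the set $A_\epsilon(|f|)$. Both proofs handle the measure-space variant by the same routine substitution of essential suprema and positive-measure sets for points.
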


\begin{proof}
We first start with $\sigma$ as the conclusion for the rest of the maps can be derived from this case. Let us fix $f\in \ell^\infty(\X)\setminus  \{0\}$. For $n\in\mathbb{N}$ and each sequence of real numbers $\{s_n\}$ such that $s_n\uparrow \infty$, we consider $\sigma_n(f):\ell^\infty(\X) \longrightarrow\R$ defined by
\begin{equation}\label{sigma-n}
\sigma_n (f,g):=\sup_\X  (s_n f+ g) -s_n\sup_\X f,\quad g\in \ell^\infty(\X).
\end{equation}
From Remarks \ref{Remark.Gateaux.Hadamard} and \ref{Remark.Proof.Delta}, it suffices to show that $\sigma_n (f,g)\to \sigma^\prime_f(g)$, as $n\to\infty$, with $\sigma^\prime_f(g)$ defined in (\ref{derivatives}).
For $\epsilon>0$ and ${x} \notin A_\epsilon(f)$, we have that
\begin{equation*}
s_n f({x})+g({x})-s_n\sup_\X f \le  \sup_\X g - s_n \epsilon.
\end{equation*}
Hence, for all $\epsilon>0$, we obtain that
\begin{equation}\label{bound.1.th1}
\begin{split}
\limsup_{n\to\infty} \sigma_n (f,g) &=  \limsup_{n\to\infty} \Big( \sup_{A_\epsilon(f)}   (s_n f+g)-s_n \sup_\X f     \Big)\\
&\le \sup_{A_\epsilon(f)}  g.
\end{split}
\end{equation}

Conversely, let us define
\begin{equation}\label{h}
h(\epsilon):=\sup_{A_\epsilon(f)} g,\quad \epsilon>0.
\end{equation}
Observe that $h$ is non-decreasing and thus the limit as $\epsilon$ decreases to $0$ exists and, by definition, coincides with $\sigma_f^\prime(g)$.
For each $m\in\mathbb{N}$, there exists $x_m\in A_{1/m}(f)$ satisfying
\begin{equation}\label{sup-sequences}
g({x}_m)\ge h(1/m)-1/m\quad\text{and}\quad f({x}_m)\ge \sup_\X f -1/m.
\end{equation}
From (\ref{sup-sequences}), for each $s_n$, we have that
\begin{equation}\label{bound.2.th1}
\begin{split}
h(1/m)&\le g({x}_m)+1/m\\
&= s_n f({x}_m)+g({x}_m)-s_n f({x}_m)+1/m\\
&\le \sigma_n(f,g)+ (s_n+1)/m.
\end{split}
\end{equation}
Now (\ref{bound.2.th1}) implies that, for all $n\in\mathbb{N}$,
\begin{equation}\label{bound.3.th1}
\lim_{\epsilon\downarrow 0} \sup_{A_\epsilon(f)} g = \lim_{m\to\infty} h(1/m) \le \sigma_n(f,g).
\end{equation}
The proof corresponding to $\sigma$ follows from (\ref{bound.1.th1}) and (\ref{bound.3.th1}).

Now, we consider the map $\delta$ in (\ref{funtionals}). Assume that $f\in \ell^\infty(\X) $ with $\Vert f\Vert_\infty>0$.
For  $g\in \ell^\infty(\X) $, we have to show that  $\delta_n (f,g)\to \delta^\prime_f(g)$, as $n\to\infty$, where $\delta_n (f,g):=\Vert s_n f+ g \Vert_\infty-s_n\,\Vert f \Vert_\infty$ and $s_n\uparrow \infty$. First,
for $\epsilon<\Vert f\Vert_\infty/2$ and $s_n>2\Vert g \Vert_\infty /\Vert f \Vert_\infty$, it is readily checked that $s_n\, |f| +  \sgn(f)\cdot  g \ge 0$ globally on $A_\epsilon(|f|)$. We hence conclude that
\begin{equation*}
\lim_{n\to\infty}  \delta_n (f,g) = \lim_{n\to\infty}  \sigma_n (|f|,g \cdot\sgn(f))=  \sigma_{|f|}^\prime (g \cdot \sgn(f))=\delta_f^\prime(g).
\end{equation*}

The proof for $\iota$ and $\alpha$ follows from the duality between supremum and infimum. Finally, the case in which $\X$ is a measure space  can be treated in a similar way so it is therefore omitted.
\end{proof}

As pointed out in the introduction, \citet[Proposition 1]{Romisch} provides the same result as Theorem \ref{Theorem.differentiability} for the infimum. Obviously, the derivatives of the supremum and amplitude of a function can be derived from the infimum by duality. The additional contribution of Theorem \ref{Theorem.differentiability} is the differentiability of the supremum norm operator, $\delta$. Also, the proof we have included here is slightly different to the one in \citet{Romisch}. The expressions in \eqref{derivatives} will be used throughout Sections \ref{Subsection.Compact.Spaces}-\ref{Subection.Skorohod.Space} to obtain simplified expressions of the derivatives.

Theorem \ref{Theorem.differentiability} ensures that the functionals in \eqref{funtionals} are Hadamard direc\-tio\-nally differentiable. Nevertheless, in general these maps are \textit{not} in uniformly Hadamard differentiable (see Definition \ref{Definition.Hadamard.uniform}) as the following example shows.

\begin{example}\label{Example-not-uniform}
{\rm Let $\X$ be the interval $[0,1]$ in $\R$ and we consider the function $f\equiv 1$. For $x\in [0,1]$ and $n\in\mathbb{N}$, let $f_n(x):= 1+x/n$, $g(x):=1-x$, and $s_n=n$. We have that $f_n\to f$ in $\ell^\infty(\X)$ and it is easy to check that $\sigma_n(f_n,g)=0$, where $\sigma_n$ is given in \eqref{sigma-n}. However, $\sigma_f^\prime(g)=\sup_{[0,1]} g=1$. We conclude that $\sigma$ is not uniformly Hadamard differentiable, and therefore neither are the rest of the maps  in \eqref{funtionals}.
}
\end{example}

Following the same ideas as in the proof of Theorem \ref{Theorem.differentiability}, the following partial result can be proved.
\begin{corollary}
Let $\delta$, $\sigma$, $\iota$ and $\alpha$ be as in (\ref{funtionals}). For each $f$, $g\in \ell^\infty(\X)$ and all sequences $\{t_n\}\subset\R$, $\{f_n\}$, $\{g_n\}\subset\ell^\infty(\X)$ such that $t_n\downarrow0$ , $f_n\to f$ and $g_n\to g$ in $\ell^\infty(\X)$, we have that
\begin{equation}\label{bound.derivatives}
\begin{aligned}
 \limsup_{n\to\infty}\frac{\delta(f_n+t_n g_n)-\delta(f_n)}{t_n}&\le \delta^\prime_f (g),\quad   &  \limsup_{n\to\infty}\frac{\sigma(f_n+t_n g_n)-\sigma(f_n)}{t_n}&\le \sigma^\prime_f (g),\\[2 mm]
\liminf_{n\to\infty}\frac{\iota(f_n+t_n g_n)-\iota(f_n)}{t_n}&\ge \iota^\prime_f (g),\quad   &   \limsup_{n\to\infty}\frac{\alpha(f_n+t_n g_n)-\alpha(f_n)}{t_n}&\le \alpha^\prime_f (g),
\end{aligned}
\end{equation}
where $\delta^\prime_f$, $\sigma^\prime_f$, $\iota^\prime_f$ and $\alpha^\prime_f$ are given in \eqref{derivatives}.
\end{corollary}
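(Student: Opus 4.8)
The plan is to reduce everything to the statement for $\sigma$, exactly as in the proof of Theorem \ref{Theorem.differentiability}, and to reuse only the ``upper bound'' half of that argument. Fix $f,g\in\ell^\infty(\X)$ (with $\|f\|_\infty>0$ in the case of $\delta$) and sequences $t_n\downarrow 0$, $f_n\to f$, $g_n\to g$. Setting $s_n:=1/t_n\uparrow\infty$ and using $s_nt_n=1$, a direct computation gives
\begin{equation*}
\frac{\sigma(f_n+t_ng_n)-\sigma(f_n)}{t_n}=\sup_\X(s_nf_n+g_n)-s_n\sup_\X f_n=:\sigma_n(f_n,g_n),
\end{equation*}
which is precisely the functional $\sigma_n$ from \eqref{sigma-n}, but evaluated at the perturbed arguments $f_n,g_n$ rather than at the fixed pair $f,g$. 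Thus the first and main task is to prove that $\limsup_{n}\sigma_n(f_n,g_n)\le\sigma^\prime_f(g)$.

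Next I would fix $\epsilon>0$, write $\rho_n:=\|f_n-f\|_\infty\to 0$, and split the supremum over $\X=A_\epsilon(f)\cup(\X\setminus A_\epsilon(f))$, examining $\Phi_n(x):=s_n\big(f_n(x)-\sup_\X f_n\big)+g_n(x)$, so that $\sigma_n(f_n,g_n)=\sup_\X\Phi_n$. On $A_\epsilon(f)$ the elementary inequality $f_n(x)\le\sup_\X f_n$ forces $\Phi_n(x)\le g_n(x)$, whence $\sup_{A_\epsilon(f)}\Phi_n\le\sup_{A_\epsilon(f)}g_n\to\sup_{A_\epsilon(f)}g$ (the supremum over a fixed set being $1$-Lipschitz in the sup-norm). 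On the complement, $f(x)<\sup_\X f-\epsilon$ together with $f_n(x)\le f(x)+\rho_n$ and $\sup_\X f_n\ge\sup_\X f-\rho_n$ yields $f_n(x)-\sup_\X f_n<-\epsilon+2\rho_n$; since $\rho_n\to0$, for $n$ large this is below $-\epsilon/2$, so $\sup_{\X\setminus A_\epsilon(f)}\Phi_n\le -s_n\epsilon/2+\|g_n\|_\infty\to-\infty$. Combining the two pieces gives $\limsup_n\sigma_n(f_n,g_n)\le\sup_{A_\epsilon(f)}g$ for every $\epsilon>0$, and letting $\epsilon\downarrow0$ produces $\sigma^\prime_f(g)$ by the definition in \eqref{derivatives}.

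I expect this decomposition to be the crux, and it is exactly where the weakening to a one-sided bound is essential: one cannot hope for $s_n\rho_n\to0$ (indeed Example \ref{Example-not-uniform} shows that $s_n\rho_n$ may stay bounded away from $0$, which is precisely why the maps fail to be uniformly Hadamard differentiable). The argument survives only because on $A_\epsilon(f)$ I discard the nonpositive term $s_n(f_n-\sup_\X f_n)$ altogether, while on the complement the \emph{fixed} gap $\epsilon$ dominates the \emph{vanishing} perturbation $2\rho_n$; no control of the product $s_n\rho_n$ is ever required.

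Finally I would deduce the remaining three inequalities. For $\delta$ one repeats the same split on $A_\epsilon(|f|)$, but now a sign analysis is needed: on $A_\epsilon(|f|)$ the value $|f|$ is close to $\|f\|_\infty>0$, so $f$ has a definite sign there, and for $n$ large $|s_nf_n+g_n|$ agrees with $\sgn(f)\,(s_nf_n+g_n)$ up to a term tending to $-\infty$; this reduces $\delta_n(f_n,g_n)$ to the $\sigma_n(|f|,g\cdot\sgn(f))$-type estimates above and gives $\limsup_n t_n^{-1}\big(\delta(f_n+t_ng_n)-\delta(f_n)\big)\le\delta^\prime_f(g)$. The $\iota$ inequality follows from the identity $\iota=-\sigma(-\,\cdot\,)$, which converts the $\limsup$ bound for $\sigma$ (applied to $-f_n\to-f$ and $-g_n\to-g$) into the desired $\liminf$ bound, using $A_\epsilon(-f)=B_\epsilon(f)$ and hence $-\sigma^\prime_{-f}(-g)=\iota^\prime_f(g)$. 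Lastly, writing $\alpha=\sigma-\iota$ and combining $\limsup(X_n-Y_n)\le\limsup X_n-\liminf Y_n$ with $\alpha^\prime_f(g)=\sigma^\prime_f(g)-\iota^\prime_f(g)$ (both one-sided $\epsilon$-limits existing by monotonicity) closes the $\alpha$ case.
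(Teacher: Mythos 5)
Your proof is correct and follows exactly the route the paper intends: it reuses the upper-bound half of the proof of Theorem \ref{Theorem.differentiability} (the argument leading to \eqref{bound.1.th1}), adapted to the moving base points $f_n$ by observing that on $A_\epsilon(f)$ the nonpositive term $s_n(f_n-\sup_\X f_n)$ can be discarded while on the complement the fixed gap $\epsilon$ dominates $\Vert f_n-f\Vert_\infty$, and then deduces the $\delta$, $\iota$ and $\alpha$ cases by the same reductions used there. The paper only sketches this ("following the same ideas as in the proof of Theorem \ref{Theorem.differentiability}"), so your write-up simply fills in the details of that indicated argument.
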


In general, the reverse inequalities in \eqref{bound.derivatives} fail to hold because it is not possible to control the term $(\phi(f_n)-\phi(f))/t_n$ ($\phi\in\{\delta,\sigma,\iota, \alpha\}$), for all sequences $\{t_n\}\subset\R$ and $\{f_n\}\subset\ell^\infty(\X)$ such that $t_n\downarrow0$ and $f_n\to f$.

\subsection{Compact metric spaces}\label{Subsection.Compact.Spaces}

In some occasions the limit in $\epsilon$ of the derivatives in (\ref{derivatives}) can be removed. For example, if $\X$ is a compact metric space, the derivatives can be characterized by means of convergent sequences in $\X$ as the following corollary shows. 

\begin{corollary}\label{Corollary.Compact}
In the context of Theorem \ref{Theorem.differentiability}, let us further assume that $(\X,d)$ is a compact metric space. The derivatives in (\ref{derivatives}) can be expressed as

\begin{equation}\label{derivatives-2}
\begin{aligned}
\delta^\prime_f(g) &= \sup_{A_0(|f|)} (g \cdot \sgn(f))^\filledtriangleup_{|f|},\quad   &  \sigma^\prime_f(g)  &= \sup_{A_0(f)} g^\filledtriangleup_f,\\[2 mm]
\iota^\prime_f(g) &= \inf_{B_0(f)} g^\filledtriangledown_f ,\quad   &   \alpha^\prime_f(g) &= \sup_{A_0(f)} g^\filledtriangleup_f -\inf_{B_0(f)} g^\filledtriangledown_f,
\end{aligned}
\end{equation}
where for $h, l \in \ell^\infty(\X)$,
\begin{align}\label{A0B0}
\begin{split}
A_0(h):=& \Big\{ x\in\X: \text{ there exists } \{x_n\}\subset \X \text{ with } x_n\to x\text{ and }h(x_n)\to\sup_\X h  \Big\},\\
B_0(h):=& \Big\{ x\in\X: \text{ there exists } \{x_n\}\subset \X \text{ with } x_n\to x\text{ and }h(x_n)\to\inf_\X h  \Big\},
\end{split}\\
\begin{split}\label{h+h-}
h^\filledtriangleup_l(x):=& \sup \Big\{\limsup_{n\to\infty} h(x_n) : x_n\to x\text{ and } l(x_n)\to\sup_\X l   \Big\},\quad x\in A_0(l),\\
h^\filledtriangledown_l(x):=& \inf \Big\{\liminf_{n\to\infty} h(x_n) : x_n\to x\text{ and } l(x_n)\to\inf_\X l   \Big\},\quad x\in B_0(l).
\end{split}
\end{align}
\end{corollary}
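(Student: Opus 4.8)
The plan is to reduce everything to the supremum functional $\sigma$, to establish the new expression for $\sigma^\prime_f$ by proving the single identity
\[
\lim_{\epsilon\downarrow 0}\sup_{A_\epsilon(f)} g \;=\; \sup_{A_0(f)} g^\filledtriangleup_f,
\]
and then to transfer the conclusion to $\delta$, $\iota$ and $\alpha$ exactly along the lines of the proof of Theorem \ref{Theorem.differentiability}. Throughout I write $S:=\sup_\X f$ and $L:=\sigma^\prime_f(g)=\lim_{\epsilon\downarrow 0}\sup_{A_\epsilon(f)} g$, the latter limit existing by Theorem \ref{Theorem.differentiability} and, concretely, by monotonicity: the map $\epsilon\mapsto\sup_{A_\epsilon(f)} g$ is non-decreasing since $A_\epsilon(f)$ grows with $\epsilon$, so $\sup_{A_\epsilon(f)} g\ge L$ for every $\epsilon>0$. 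A preliminary remark is that compactness guarantees $A_0(f)\ne\emptyset$: taking $y_n$ with $f(y_n)\to S$ and extracting a convergent subsequence produces a cluster point lying in $A_0(f)$, and for any such point the competitor set in the definition of $g^\filledtriangleup_f$ is non-empty, so the right-hand side above is well defined.

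For the inequality $\sup_{A_0(f)} g^\filledtriangleup_f\le L$, which does not require compactness, I would fix $x\in A_0(f)$ and an admissible sequence $x_n\to x$ with $f(x_n)\to S$. Given $\epsilon>0$, one has $x_n\in A_\epsilon(f)$ for all large $n$, hence $g(x_n)\le\sup_{A_\epsilon(f)} g$; passing to the $\limsup$ in $n$ and then letting $\epsilon\downarrow 0$ yields $\limsup_n g(x_n)\le L$. Taking the supremum over all admissible sequences gives $g^\filledtriangleup_f(x)\le L$, and then the supremum over $x\in A_0(f)$ gives the claim.

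The reverse inequality $L\le\sup_{A_0(f)} g^\filledtriangleup_f$ is where compactness is essential and is the main obstacle. For each $m$ I would pick $y_m\in A_{1/m}(f)$ with $g(y_m)\ge\sup_{A_{1/m}(f)} g-1/m\ge L-1/m$, using the monotonicity noted above. By compactness $\{y_m\}$ has a subsequence $y_{m_k}\to x^*$; since $S-1/m_k\le f(y_{m_k})\le S$, we get $f(y_{m_k})\to S$, so $x^*\in A_0(f)$ and $\{y_{m_k}\}$ is itself an admissible sequence for $x^*$. Consequently $g^\filledtriangleup_f(x^*)\ge\limsup_k g(y_{m_k})\ge\liminf_k g(y_{m_k})\ge L$, whence $\sup_{A_0(f)} g^\filledtriangleup_f\ge L$. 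Combining the two inequalities proves the identity and hence the formula for $\sigma^\prime_f(g)$.

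Finally I would propagate the result. For $\delta$, the proof of Theorem \ref{Theorem.differentiability} gives $\delta^\prime_f(g)=\sigma^\prime_{|f|}(g\cdot\sgn(f))$, so applying the formula just established to the function $|f|$ and direction $g\cdot\sgn(f)$ delivers $\delta^\prime_f(g)=\sup_{A_0(|f|)}(g\cdot\sgn(f))^\filledtriangleup_{|f|}$. The infimum case follows from the duality $\iota^\prime_f(g)=-\sigma^\prime_{-f}(-g)$, which turns $A_0$, $g^\filledtriangleup_f$ and the ``non-decreasing'' monotonicity into $B_0$, $g^\filledtriangledown_f$ and the corresponding reversed statements. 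For the amplitude, since $\alpha^\prime_f(g)=\lim_{\epsilon\downarrow 0}\big(\sup_{A_\epsilon(f)} g-\inf_{B_\epsilon(f)} g\big)$ and each of the two monotone limits converges separately, the limit splits as $\sigma^\prime_f(g)-\iota^\prime_f(g)$, yielding the stated expression $\sup_{A_0(f)} g^\filledtriangleup_f-\inf_{B_0(f)} g^\filledtriangledown_f$.
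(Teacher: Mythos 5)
Your proof is correct and follows essentially the same route as the paper's: the upper bound via $x_n\in A_\epsilon(f)$ for large $n$, the lower bound by extracting a convergent subsequence of near-maximizers $y_m\in A_{1/m}(f)$ (the paper reuses the sequence from the proof of Theorem \ref{Theorem.differentiability}), and the transfer to $\delta$, $\iota$, $\alpha$ by the identity $\delta^\prime_f=\sigma^\prime_{|f|}(\cdot\,\sgn(f))$ and duality. The only difference is that you spell out the reductions the paper dismisses as ``analogous.''
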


\begin{proof}
We only give a detailed proof for $\sigma$ because the rest of the cases are analogous. We consider the sequence $\{x_m\}$  satisfying (\ref{sup-sequences}) obtained in Theorem \ref{Theorem.differentiability}. As $(\X,d)$ is compact, we can extract a convergent subsequence $x_{m_k}\to x$ in $\X$, as $k\to\infty$. From (\ref{sup-sequences}), we have that $x\in A_0(f)$ and, recalling (\ref{h}), from Theorem \ref{Theorem.differentiability}, we obtain that
\begin{equation}\label{compact-inequality-1}
\sigma_f^\prime(g)=\lim_{k\to\infty} h(1/m_{k})\le \limsup_{k\to\infty} g(x_{m_k})\le g_f^\filledtriangleup(x)\le  \sup_{A_0(f)} g^\filledtriangleup_f.
\end{equation}

In the other direction, let $x\in A_0(f)$ and $\{x_n\}\subset \X$ such that $x_n\to x$ and $f(x_n)\to \sup_\X f$. For each $\epsilon>0$, we have that $x_n\in A_\epsilon(f)$,  for $n$ large enough. We therefore conclude that
\begin{equation}\label{compact-inequality-2}
\limsup_{n\to\infty} g(x_n) \le  \sup_{A_\epsilon(f)} g,\quad \text{ for all }\epsilon>0.
\end{equation}
The conclusion follows from (\ref{compact-inequality-1}), (\ref{compact-inequality-2}) and Theorem \ref{Theorem.differentiability}.
\end{proof}

\begin{remark}
From the proof of Corollary \ref{Corollary.Compact} we see that the result is still valid for sequentially compact topological spaces. 
As this extension is not important for the applications in this work we will omit this framework in the following.
\end{remark}

In the following, if $(\X,d)$ is a metric space we denote by $\mathcal{C}(\X,d)$ the subset of continuous functions in $\ell^\infty(\X)$. We observe that if $g\in \mathcal{C}(\X,d)$, then $g^\filledtriangleup_f(x)=g(x)$ ($x\in A_0(f)$) and $g_f^\filledtriangledown(x)=g(x)$ ($x\in B_0(f)$), where $g_f^\filledtriangleup$ and $g_f^\filledtriangledown$ are defined as in (\ref{h+h-}). If we further assume that $f\in \mathcal{C}(\X,d)$, we have that $A_0(|f|)=M^+(|f|)$, $A_0(f)=M^+(f)$ and $B_0(f)=M^-(f)$, where for $h\in \ell^\infty(\X)$,
\begin{equation}\label{M+M-}
M^+(h):= \Big\{ x\in\X:  h(x)=\sup_\X h  \Big\}\quad\text{and}\quad M^-(h):= \Big\{ x\in\X: h(x)=\inf_\X h  \Big\}.
\end{equation}
This observation yields the following corollary.

\begin{corollary}\label{Corollary.Compact.2}
Let $(\X,d)$ be a compact metric space and let $\delta$, $\sigma$, $\iota$ and $\alpha$ be the maps defined in (\ref{funtionals}). The maps $\sigma$, $\iota$ and $\alpha$ are Hadamard directionally differentiable at any $f\in \ell^\infty(\X)$ tangentially to the set $\mathcal{C}(\X,d)$ with derivatives, for $g\in\mathcal{C}(\X,d)$,
\begin{equation}\label{derivative-3}
\sigma^\prime_f(g)= \sup_{A_0(f)} g, \quad  \iota^\prime_f(g)= \inf_{B_0(f)} g\quad\text{and}\quad \alpha^\prime_f(g)=\sup_{A_0(f)} g -\inf_{B_0(f)} g.
\end{equation}

If additionally $f\in \mathcal{C}(\X,d)\setminus \{ 0 \}$, we have that
\begin{equation}\label{derivative-continuous}
\begin{aligned}
\delta^\prime_f(g) &= \sup_{M^+(|f|)} (g \cdot \sgn(f)),\quad   &  \sigma^\prime_f(g)  &= \sup_{M^+(f)} g,\\[2 mm]
\iota^\prime_f(g) &= \inf_{M^-(f)} g ,\quad   &   \alpha^\prime_f(g) &= \sup_{M^+(f)} g -\inf_{M^-(f)} g,
\end{aligned}
\end{equation}
where $M^+(\cdot)$ and $M^-(\cdot)$ are defined in (\ref{M+M-}).
\end{corollary}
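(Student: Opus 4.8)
The plan is to read both sets of formulas directly off Corollary~\ref{Corollary.Compact}, using continuity of the direction $g$ (and, later, of $f$) to collapse the auxiliary functions $g^\filledtriangleup_f$, $g^\filledtriangledown_f$ and the sets $A_0$, $B_0$ to their elementary counterparts. No new limiting argument is required: the analytic content already sits in Corollary~\ref{Corollary.Compact} together with the elementary observation recorded just before the statement, so the proof is essentially a substitution.

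For the formulas in \eqref{derivative-3}, I would first note that the hypothesis $f\ne 0$ appearing in Theorem~\ref{Theorem.differentiability} and Corollary~\ref{Corollary.Compact} was needed only for the supremum norm $\delta$, where it enters through $\sgn(f)$; the chain of bounds \eqref{bound.1.th1}--\eqref{bound.3.th1} proving $\sigma_n(f,g)\to\sigma_f^\prime(g)$ is valid verbatim for every $f\in\ell^\infty(\X)$, and likewise for $\iota$ and $\alpha$ by duality. Hence $\sigma$, $\iota$, $\alpha$ are Hadamard directionally differentiable at every $f$ tangentially to all of $\ell^\infty(\X)$, and a fortiori tangentially to the subset $\mathcal{C}(\X,d)$, the derivative being the restriction to continuous directions of the map in \eqref{derivatives-2}. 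For continuous $g$ the observation preceding the corollary gives $g^\filledtriangleup_f(x)=g(x)$ on $A_0(f)$ and $g^\filledtriangledown_f(x)=g(x)$ on $B_0(f)$, which turns \eqref{derivatives-2} into \eqref{derivative-3}. As a sanity check, at $f=0$ one has $A_0(0)=B_0(0)=\X$, recovering $\sigma_0^\prime(g)=\sup_\X g$ and $\iota_0^\prime(g)=\inf_\X g$.

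For \eqref{derivative-continuous}, assume in addition $f\in\mathcal{C}(\X,d)\setminus\{0\}$. Continuity of $f$ yields the identifications $A_0(|f|)=M^+(|f|)$, $A_0(f)=M^+(f)$ and $B_0(f)=M^-(f)$ recorded before the statement; substituting them into \eqref{derivative-3} gives at once the expressions for $\sigma_f^\prime$, $\iota_f^\prime$ and $\alpha_f^\prime$. The only extra work concerns $\delta$, namely identifying $(g\cdot\sgn(f))^\filledtriangleup_{|f|}$ on $M^+(|f|)$. Here the point is that on $M^+(|f|)$ one has $|f|=\Vert f\Vert_\infty>0$, so $f$ is nonzero there; by continuity of $f$ the sign $\sgn(f)$ is locally constant around each such point, whence along any sequence $x_n\to x\in M^+(|f|)$ with $|f(x_n)|\to\Vert f\Vert_\infty$ we have $\sgn(f(x_n))=\sgn(f(x))$ eventually, and continuity of $g$ gives $g(x_n)\sgn(f(x_n))\to g(x)\sgn(f(x))$. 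Thus $(g\cdot\sgn(f))^\filledtriangleup_{|f|}(x)=g(x)\sgn(f(x))$ on $M^+(|f|)$, and the first formula in \eqref{derivatives-2} reduces to $\delta_f^\prime(g)=\sup_{M^+(|f|)}(g\cdot\sgn(f))$.

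The main obstacle is the treatment of $\delta$: since $\sgn(f)$ is discontinuous precisely at the zeros of $f$, one must argue that these discontinuities never interfere with the derivative, i.e.\ that the maximizing set $M^+(|f|)$ is bounded away from the zero set of $f$. This is exactly what continuity of $f$ together with $\Vert f\Vert_\infty>0$ secures, as used above. A minor bookkeeping point, needed for the first part to cover $f=0$, is the remark that the $f\ne 0$ restriction in the earlier results is inessential for $\sigma$, $\iota$ and $\alpha$; everything else is a direct substitution into Corollary~\ref{Corollary.Compact}.
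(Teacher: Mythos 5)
Your proposal is correct and follows essentially the same route as the paper, which derives this corollary directly from Corollary \ref{Corollary.Compact} via the observation preceding the statement (that continuity of $g$ collapses $g^\filledtriangleup_f$, $g^\filledtriangledown_f$ to $g$, and continuity of $f$ identifies $A_0$, $B_0$ with $M^+$, $M^-$). Your two additional points of care --- that the $f\ne 0$ restriction is inessential for $\sigma$, $\iota$, $\alpha$, and that $\sgn(f)$ is locally constant on a neighborhood of each point of $M^+(|f|)$ so that $(g\cdot\sgn(f))^\filledtriangleup_{|f|}=g\cdot\sgn(f)$ there --- are exactly the details the paper leaves implicit, and both are argued correctly.
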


The expression of the derivative $\sigma^\prime_f$ in \eqref{derivative-continuous} for continuous functions defined on a compact metric space has been previously obtained in \citet[Lemma S.4.9]{Fang-Santos}. Observe that equalities in \eqref{derivative-3} are valid even when the function $f$ is not conti\-nuous (as in the more general Corollary \ref{Corollary.Compact}). Note also that $M^+(|f|)$ (respectively, $M^+(f)$ and $M^-(f)$) in (\ref{M+M-}) is the set of extremal points corresponding to the sup-norm (respectively, the supremum and infimum) of $f$.

Another interesting question is to find conditions under which the derivatives of the maps are linear, i.e., the cases in which the mappings are fully Hadamard differentiable.
This kind of results can be traced back to \cite{Banach} (see also \cite{Leonard-Taylor-1983}, \cite{Leonard-Taylor-1985}, and the references therein). In these works the supremum norm differentiability was investigated from the point of view of functional analysis within the space $\mathcal{C}(\X,d)$, with $(\X,d)$ a compact metric space. The following result, a direct consequence of Corollary~\ref{Corollary.Compact.2}, provides general outcomes in a different context. We denote by $\card(A)$ the cardinality of the set $A$.

\begin{corollary}\label{Corollary-Full-differentiability}
Assume that $(\X,d)$ is a compact metric space and let $f\in \ell^\infty(\X)\setminus \{ 0 \}$. Let $A_0(\cdot)$ and $B_0(\cdot)$ be the sets in (\ref{A0B0}). For the maps defined in (\ref{funtionals}) we have that:
\begin{enumerate}
\item[(a)] The map $\delta$ is (fully) Hadamard differentiable at $f$ tangentially to the set $\mathcal{C}(\X,d)$ if and only if $\card(A_0(|f|))=1$ and
$\{ \limsup_{n\to\infty}  \sgn (f(x_n)) : x_n\to x \text{ and } |f(x_n)|\to\Vert f\Vert_\infty   \}=\{c \}$.
In such a case, $\delta^\prime_f(g)=c\, g(x^*)$, where $A_0(|f|)=\{x^*\}$.

\item[(b)] The map $\sigma$ is (fully) Hadamard differentiable at $f$ tangentially to the set $\mathcal{C}(\X,d)$ if and only if $\card(A_0(f))=1$.
In such a case, $\sigma^\prime_f(g)=g(x^+)$, where $A_0(f)=\{x^+\}$.

\item[(c)] The map $\iota$ is (fully) Hadamard differentiable at $f$ tangentially to the set $\mathcal{C}(\X,d)$ if and only if $\card(B_0(f))=1$.
In such a case, $\iota^\prime_f(g)=g(x^-)$, where $B_0(f)=\{x^-\}$.

\item[(d)] The map $\alpha$ is (fully) Hadamard differentiable at $f$ tangentially to the set $\mathcal{C}(\X,d)$ if and only if $\card(A_0(f))=\card(B_0(f))=1$. In such a case, $\alpha^\prime_f(g)=g(x^+)-g(x^-)$, where $A_0(f)=\{x^+\}$ and $B_0(f)=\{x^-\}$.

\end{enumerate}
\end{corollary}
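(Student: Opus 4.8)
The plan is to exploit that each of the four derivatives furnished by Corollary~\ref{Corollary.Compact.2} (and, for $\delta$, by Corollary~\ref{Corollary.Compact}) is, up to a global sign, a supremum of point-evaluations $g\mapsto g(x)$, hence a convex (for $\sigma$, $\alpha$, $\delta$) or concave (for $\iota$) positively homogeneous functional on $\mathcal{C}(\X,d)$. Since a Hadamard directionally differentiable map is \emph{fully} Hadamard differentiable precisely when its derivative is linear, and since a convex (or concave) positively homogeneous functional $p$ with $p(0)=0$ is linear if and only if it is odd, the whole statement reduces to deciding, for each functional, when the identity $p(g)+p(-g)=0$ holds for every $g\in\mathcal{C}(\X,d)$. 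I would record this reduction first, together with the fact that convexity gives $p(g)+p(-g)\ge 0$ always; thus for the ``only if'' directions it suffices to exhibit a single direction $g$ with $p(g)+p(-g)>0$.

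For $\sigma$, $\iota$ and $\alpha$ the oddness identity is transparent. Using $\sup_{A_0(f)}(-g)=-\inf_{A_0(f)}g$ one gets
\[
\sigma'_f(g)+\sigma'_f(-g)=\sup_{A_0(f)}g-\inf_{A_0(f)}g=\amp_{A_0(f)}g ,
\]
and likewise $\alpha'_f(g)+\alpha'_f(-g)=\amp_{A_0(f)}g+\amp_{B_0(f)}g$; the infimum case follows from the duality $\iota'_f(g)=-\sigma'_{-f}(-g)$. These sums vanish for every continuous $g$ exactly when the relevant extremal sets are singletons: if, say, $A_0(f)$ contained two distinct points $x_1\ne x_2$, the normality of the compact metric space $\X$ (Urysohn/Tietze) supplies a continuous $g$ with $g(x_1)=1$, $g(x_2)=-1$, whence $\amp_{A_0(f)}g\ge 2>0$. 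Conversely, when the sets reduce to single points the derivatives collapse to the evaluation functionals $g(x^+)$, $g(x^-)$, $g(x^+)-g(x^-)$, which are linear. This settles (b), (c) and (d).

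The map $\delta$ is the genuine obstacle, because $\sgn(f)$ enters and $f$ is not assumed continuous, so I must first rewrite $\delta'_f$ in a usable form. Starting from $\delta'_f(g)=\sup_{A_0(|f|)}(g\cdot\sgn(f))^\filledtriangleup_{|f|}$ and using $g(x_n)\to g(x)$ for continuous $g$, I would show that for $x\in A_0(|f|)$,
\[
(g\cdot\sgn(f))^\filledtriangleup_{|f|}(x)=
\begin{cases}
g(x), & x\in A_+\setminus A_-,\\
-g(x), & x\in A_-\setminus A_+,\\
|g(x)|, & x\in A_+\cap A_-,
\end{cases}
\]
where $A_+$ (resp.\ $A_-$) is the set of $x\in A_0(|f|)$ admitting a sequence $x_n\to x$ with $f(x_n)\to\Vert f\Vert_\infty$ (resp.\ $f(x_n)\to-\Vert f\Vert_\infty$), so that $A_+\cup A_-=A_0(|f|)$. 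Taking the supremum over $x$ yields the clean convex positively homogeneous expression
\[
\delta'_f(g)=\max\Big(\sup_{A_+}g,\ \sup_{A_-}(-g)\Big).
\]
Writing $S(x):=\{\limsup_n\sgn(f(x_n)):x_n\to x,\ |f(x_n)|\to\Vert f\Vert_\infty\}$, the bookkeeping linking the statement to $A_\pm$ is that $1\in S(x)\Leftrightarrow x\in A_+$ and $-1\in S(x)\Leftrightarrow x\in A_-$; hence $\card(A_0(|f|))=1$ together with $S(x^*)=\{c\}$ is equivalent to $A_+\cup A_-=\{x^*\}$ with $x^*$ in exactly one of $A_+,A_-$, and then $\delta'_f(g)=c\,g(x^*)$ with $c=1$ if $x^*\in A_+$ and $c=-1$ if $x^*\in A_-$.

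It remains to check that any other configuration destroys oddness, giving (a). If $A_0(|f|)$ has two distinct points $x_1\ne x_2$, I would choose, via Urysohn, a continuous $g$ whose values at $x_1,x_2$ force both $\delta'_f(g)>0$ and $\delta'_f(-g)>0$: values $1$ and $-1$ when both points lie on the same side (both in $A_+$, or both in $A_-$), and equal values $1$ and $1$ when they lie on opposite sides; in each case one supremum defining $\delta'_f(g)$ and one defining $\delta'_f(-g)$ is forced positive, so $\delta'_f(g)+\delta'_f(-g)>0$. If instead $A_0(|f|)=\{x^*\}$ but $x^*\in A_+\cap A_-$, then $\delta'_f(g)=|g(x^*)|$, and $g$ with $g(x^*)=1$ again gives a strictly positive sum. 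Thus linearity fails unless the stated condition holds. The points where I expect to spend the most care, and which constitute the crux, are the derivation of the $\max(\sup_{A_+}g,\sup_{A_-}(-g))$ representation from the triangle-up operation (controlling sequences along which $f$ oscillates between $\pm\Vert f\Vert_\infty$) and the sign choices in the separating functions, since $A_+$ and $A_-$ may overlap and need not be disjoint.
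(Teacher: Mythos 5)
Your proof is correct, and it follows the route the paper intends: the paper states this corollary without proof, as ``a direct consequence of Corollary~\ref{Corollary.Compact.2}'', and your argument is precisely the fleshing-out of that claim --- reading linearity of the sublinear derivatives off the formulas in \eqref{derivative-3} and \eqref{derivatives-2}, with the only genuinely nontrivial supplement being your rewriting of $\delta^\prime_f(g)$ as $\max\bigl(\sup_{A_+}g,\ \sup_{A_-}(-g)\bigr)$, which correctly handles the possibly discontinuous $f$ and matches the sign condition in part (a). The oddness criterion for sublinear functionals, the separating continuous functions on the metric space, and the case analysis on $A_+$, $A_-$ are all sound.
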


Note that when $f\in \mathcal{C}(\X,d)$, we have that $A_0(|f|)=M^+(|f|)$ in (\ref{M+M-}) and the condition $\card(A_0(|f|))=1$  means that $f$ is a \textit{peaking function}, that is, there exists $x^*\in\X$ such that $|f(x^*)|=\Vert f\Vert_\infty$ and $|f(x^*)|>|f(x)|$, for all $x\in\X$ with $x\ne x^*$.

From a statistical point of view, identifying the cases in which the maps are Hadamard differentiable has two important consequences when the limit in (\ref{weak}) is Gaussian: firstly, as the linear derivatives are (essentially) the evaluation at an appropriate point, by the extended Delta method (see Proposition \ref{Proposition.delta}), the asymptotic distribution of the statistic in (\ref{problem}) is normal; secondly,
the standard bootstrap for (\ref{problem}) is consistent if and only if the underlying map $\phi$ is fully Hadamard differentiable (see \citet[Theorem 3.1]{Fang-Santos}).

\subsection{Totally bounded metric spaces}\label{Subsection.Totally.bounded}

If $\mathbb{Q}$ is a tight Borel measurable map into $ \ell^\infty(\X)$ as in (\ref{weak}), then there is a pseudo-metric on $\X$ such that the sample paths of $\mathbb{Q}$ are uniformly continuous and $\X$ is totally bounded (see \citet[Lemma 1.5.9]{van der Vaart-Wellner}).
For statistical applications it is therefore important to determine conditions under which the derivatives in (\ref{derivatives}) have similar expressions as those in Corollary \ref{Corollary.Compact.2} when the underlying space is totally bounded.

We recall that if $(\X,d)$ is a totally bounded metric space, $(\bar \X, d)$ is a compact metric space, where $\bar \X$ is the completion of $\X$ with respect to $d$. Further, the space $\mathcal{C}_u(\X,d)$ of bounded and uniformly continuous functions $f:\X\longrightarrow \R$ is isometric to $\mathcal{C}(\bar\X,d)$. Each $f\in \mathcal{C}_u(\X,d)$ has a unique extension to a function $\bar f \in \mathcal{C}(\bar\X,d)$. For $x\in \bar{\X}\setminus \X$, this extension is defined by $\bar f(x)=\lim_{n\to\infty} f(x_n)$, with $\{x_n\}\subset \X$ such that $x_n \to x$ (in fact, Cauchy-continuity is enough to check that $\bar f$ is well-defined, but uniform continuity suffices for our purposes).

In this setting, it is straightforward to check that Corollary \ref{Corollary.Compact} still holds if we substitute the sets $A_0(\cdot)$ and  $B_0(\cdot)$ by
\begin{equation}\label{A0barB0bar}
\begin{split}
\bar A_0(h):=& \Big\{ x\in\bar\X: \text{ there exists } \{x_n\}\subset \X \text{ with } x_n\to x\text{ and }h(x_n)\to\sup_{ \X}  h  \Big\},\\
\bar B_0(h):=& \Big\{ x\in\bar\X: \text{ there exists } \{x_n\}\subset \X \text{ with } x_n\to x\text{ and }h(x_n)\to\inf_{ \X}  h  \Big\},
\end{split}
\end{equation}
for $h\in \ell^\infty(\X)$. In particular, the following corollary, important for statistical applications in which $\X$ is a class of functions (see Section \ref{Setion.Maximum.mean.discrepancies}), holds.

\begin{corollary}\label{Corollary.Totally.Compact}

Let $(\X,d)$ be a totally bounded metric space and let $\delta$, $\sigma$, $\iota$ and $\alpha$ be the maps defined in (\ref{funtionals}).
\begin{enumerate}
\item[(a)] The maps $\sigma$, $\iota$ and $\alpha$ are Hadamard directionally differentiable at $f\in \ell^\infty(\X)$ tangentially to the set $\mathcal{C}_u(\X,d)$ with derivatives, for $g\in\mathcal{C}_u(\X,d)$,
\begin{equation*}
\sigma^\prime_f(g)= \sup_{\bar A_0(f)} \bar g, \quad  \iota^\prime_f(g)= \inf_{\bar B_0(f)}\bar  g\quad\text{and}\quad \alpha^\prime_f(g)=\sup_{\bar A_0(f)} \bar g -\inf_{\bar B_0(f)} \bar g,
\end{equation*}
where $\bar A_0(\cdot)$ and $\bar B_0(\cdot)$ are defined in (\ref{A0barB0bar}).

\item[(b)] If additionally $f\in \mathcal{C}_u(\X,d)\setminus \{0\}$, we have that
\begin{equation*}
\delta^\prime_f(g)=\sup_{\bar M^+ (|f|)} (\bar g \cdot \sgn(\bar f)),\, \sigma^\prime_f(g)= \sup_{\bar M ^+ (f)} \bar g, \, \iota^\prime_f(g)= \inf_{\bar M^-(f)} \bar g,\,  \alpha^\prime_f(g)=\sup_{\bar M^+(f)} \bar g -\inf_{\bar M^-(f)} \bar g,
\end{equation*}
where for $h\in \mathcal{C}_u(\X,d)$,
\begin{equation}\label{M+barM-bar}
\bar M^+(h):= \Big\{ x\in\bar \X:  \bar h(x)=\sup_\X h  \Big\}\quad\text{and}\quad \bar M^-(h):= \Big\{ x\in\bar \X: \bar h(x)=\inf_\X h  \Big\}.
\end{equation}

\end{enumerate}

\end{corollary}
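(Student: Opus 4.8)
The plan is to deduce everything from Theorem~\ref{Theorem.differentiability} by transporting its derivative formulas \eqref{derivatives} to the completion $(\bar\X,d)$, which is compact because $\X$ is totally bounded. The Hadamard directional differentiability tangentially to $\mathcal{C}_u(\X,d)$ is essentially free: Theorem~\ref{Theorem.differentiability} already yields differentiability tangentially to the whole of $\ell^\infty(\X)$, and a map differentiable tangentially to a set is differentiable tangentially to any subset, so in particular tangentially to $\mathcal{C}_u(\X,d)\subset\ell^\infty(\X)$. Hence the real content is to re-express $\sigma'_f$, $\iota'_f$ and $\alpha'_f$ as suprema and infima of $\bar g$ over $\bar A_0(f)$ and $\bar B_0(f)$ once the direction $g$ is taken in $\mathcal{C}_u(\X,d)$. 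The argument mirrors the proof of Corollary~\ref{Corollary.Compact}, the only change being that convergent subsequences now live in $\bar\X$ rather than in $\X$, and that the limit of $g$ along such sequences is controlled by $\bar g$ through uniform continuity.

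Concretely, for $\sigma$ I would establish two inequalities. For ``$\le$'', I take the near-maximizing sequence $\{x_m\}\subset\X$ of \eqref{sup-sequences} produced in Theorem~\ref{Theorem.differentiability}, so $g(x_m)\ge h(1/m)-1/m$ and $f(x_m)\ge\sup_\X f-1/m$ with $h$ as in \eqref{h}; by total boundedness it has a Cauchy subsequence converging to some $x\in\bar\X$, and $f(x_{m_k})\to\sup_\X f$ forces $x\in\bar A_0(f)$ in the sense of \eqref{A0barB0bar}. Since $g\in\mathcal{C}_u(\X,d)$ we have $g(x_{m_k})\to\bar g(x)$, and as $\sigma'_f(g)=\lim_m h(1/m)$ this gives $\sigma'_f(g)\le\bar g(x)\le\sup_{\bar A_0(f)}\bar g$. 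For ``$\ge$'', I fix $x\in\bar A_0(f)$ with a witnessing sequence $x_n\to x$ in $\X$ satisfying $f(x_n)\to\sup_\X f$; then $x_n\in A_\epsilon(f)$ eventually, so $g(x_n)\le\sup_{A_\epsilon(f)}g$, and letting $n\to\infty$ (using $g(x_n)\to\bar g(x)$) and then $\epsilon\downarrow0$ yields $\bar g(x)\le\sigma'_f(g)$; a supremum over $x$ closes the loop. The statement for $\iota$ follows by applying the same argument to $-f$ and $-g$ (the duality already used in Theorem~\ref{Theorem.differentiability}), and since both monotone limits in \eqref{derivatives} exist separately, $\alpha'_f(g)=\sigma'_f(g)-\iota'_f(g)$ delivers the amplitude formula. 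This proves part~(a).

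For part~(b) I would first record that when $f\in\mathcal{C}_u(\X,d)$ one has $\sup_\X f=\sup_{\bar\X}\bar f$ (density of $\X$ in $\bar\X$ together with continuity of $\bar f$), and then verify the set identities $\bar A_0(f)=\bar M^+(f)$ and $\bar B_0(f)=\bar M^-(f)$ of \eqref{M+barM-bar}: the inclusion ``$\subseteq$'' uses continuity of $\bar f$ to pass $f(x_n)\to\sup_\X f$ to $\bar f(x)=\sup_\X f$, while ``$\supseteq$'' uses density to approximate any maximizer of $\bar f$ by points of $\X$. Substituting these identities into part~(a) gives the formulas for $\sigma$, $\iota$ and $\alpha$, and the case of $\delta$ is then handled, as in Theorem~\ref{Theorem.differentiability}, through the reduction $\delta'_f(g)=\sigma'_{|f|}(g\cdot\sgn(f))$.

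I expect the supremum-norm map $\delta$ to be the only delicate point, precisely because the direction $g\cdot\sgn(f)$ need not lie in $\mathcal{C}_u(\X,d)$: the factor $\sgn(f)$ is discontinuous across the zero set of $f$, so part~(a) does not apply verbatim. The resolution is to localize near $\bar M^+(|f|)$, where $|f|$ equals $\Vert f\Vert_\infty>0$; by continuity $f$ keeps a constant sign on a neighborhood of each such maximizer, so there $g\cdot\sgn(f)$ agrees with the uniformly continuous function $\pm g$. Running the two-sided sequence argument for $|f|$ then produces, at a limiting maximizer $x^*\in\bar M^+(|f|)$, the value $\bar g(x^*)\,\sgn(\bar f(x^*))$, which gives $\delta'_f(g)=\sup_{\bar M^+(|f|)}(\bar g\cdot\sgn(\bar f))$ and completes part~(b).
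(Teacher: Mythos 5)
Your proposal is correct and follows essentially the same route as the paper, which simply observes that the completion $(\bar\X,d)$ is compact, that $\mathcal{C}_u(\X,d)$ is isometric to $\mathcal{C}(\bar\X,d)$ via the extension $g\mapsto\bar g$, and that the sequence arguments of Corollary~\ref{Corollary.Compact} carry over with $A_0(\cdot)$, $B_0(\cdot)$ replaced by $\bar A_0(\cdot)$, $\bar B_0(\cdot)$. You in fact supply more detail than the paper (which states the adaptation is ``straightforward to check''), and your localization argument for $\delta$ near $\bar M^+(|f|)$ --- where $f$ keeps constant sign so that $g\cdot\sgn(f)$ is locally $\pm g$ --- correctly handles the one genuinely delicate point.
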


\begin{remark}\label{Remark.Full.Totally.bounded}
Corollary \ref{Corollary-Full-differentiability} still holds if $(\X,d)$ is a totally bounded metric space and we replace $\mathcal{C}(\X,d)$, $A_0(\cdot)$ and $B_0(\cdot)$ with $\mathcal{C}_{u}(\X,d)$, $\bar A_0(\cdot)$ and $\bar B_0(\cdot)$ (defined in (\ref{A0barB0bar})), respectively.

\end{remark}

\subsection{Weakly compact sets}\label{Subsection.Weakly.compact}

The compacteness assumption on $\X$ in Corollaries \ref{Corollary.Compact} and \ref{Corollary.Compact.2} could be too demanding in some infinite-dimensional settings. A simple inspection of the proof of Corollary \ref{Corollary.Compact} shows that a similar result
can be stated when $\X$ is a weakly compact subset of a Banach space by using Eberlein-\v{S}mulian theorem (see \citet[p. 163]{Conway}). In such a case, Corollary \ref{Corollary.Compact} still holds by substituting the sets $A_0(h)$ and $B_0(h)$ in (\ref{A0B0}) and the quantities $h^\filledtriangleup_f(x)$ and $h^\filledtriangledown_f(x)$ in (\ref{h+h-}) respectively by
\begin{equation}\label{A0wB0w}
\begin{split}
A_0^w(h):=& \Big\{ x\in\X: \text{ there exists } \{x_n\}\subset \X \text{ with } x_n\rightharpoonup x\text{ and }h(x_n)\to\sup_\X h  \Big\},\\
B_0^w(h):=& \Big\{ x\in\X: \text{ there exists } \{x_n\}\subset \X \text{ with } x_n\rightharpoonup x\text{ and }h(x_n)\to\inf_\X h  \Big\},
\end{split}
\end{equation}
and
\begin{equation}\label{h+wh-w}
\begin{split}
h^{\filledtriangleup,w}_f(x):=& \sup \Big\{\limsup_{n\to\infty} h(x_n) : x_n\rightharpoonup x\text{ and } f(x_n)\to\sup_\X f   \Big\},\quad x\in A_0^w(f),\\
h^{\filledtriangledown,w}_f(x):=& \inf \Big\{\liminf_{n\to\infty} h(x_n) : x_n\rightharpoonup x\text{ and } f(x_n)\to\inf_\X f   \Big\},\quad x\in B_0^w(f),
\end{split}
\end{equation}
where $x_n\rightharpoonup x$ stands for the weak convergence in the corresponding space. We recall that if $\{x_n\}\subset \mathcal{B}$ with $\mathcal{B}$ a Banach space, $x_n\rightharpoonup x$ means that $\varphi(x_n)\to\varphi(x)$ for all $\varphi\in \mathcal{B}^*$, the topological dual space of $\mathcal{B}$ formed by linear and continuous functionals from $\mathcal{B}$ to $\R$. If $\mathcal{B}=\mathcal{H}$ is a Hilbert space with inner product $\inp{\cdot}{\cdot}$, the weak convergence amounts to $\inp{x_n}{y}\to \inp{x}{y}$, for all $y\in\mathcal{H}$.

In this context, we have analogous results as Corollaries \ref{Corollary.Compact.2} and \ref{Corollary.Totally.Compact} by changing the set of tangency points.
\begin{definition}\label{Definition-prelinear}
If $\X$ is a subset of a vector space, a function $g:\X\longrightarrow\R$ is said to be {\rm prelinear} on $\X$ if $\sum_{i=1}^r \lambda_i g(x_i)=0$ whenever $\sum_{i=1}^r \lambda_i x_i=0$, for $r<\infty$, $\lambda_i\in\R$ and $x_i\in \X$ ($i=1,\dots,r$).
\end{definition}
Every prelinear function $g$ defined on $\X$ admits a unique extension to a linear function on $\text{span}(\X)$, the linear span of $\X$  (see \citet[Lemma 2.30, p. 88]{Dudley-1999}). This extension is given by
\begin{equation}\label{extension}
\tilde g\left(\sum_{i=1}^r \lambda_i x_i\right)=\sum_{i=1}^r \lambda_i g(x_i),\quad \text{with }x_i\in\X\text{ and } \lambda_i\in\R\ (i=1,\dots,r).
\end{equation}

In the following, if $(\X,d)$ is a metric space contained in a vector space, we denote by $\mathcal{C}_{pl}(\X,d)$ the subset of $\mathcal{C}(\X,d)$ formed by prelinear functions on $\X$. Further, if $\mathcal{B}$ is a Banach space with norm $\Vert \cdot \Vert$, $d_{\mathcal{B}}$ stands for the metric on $\mathcal{B}$, i.e., $d_{\mathcal{B}}(x,y)=\Vert x-y\Vert$ ($x,y\in\mathcal{B}$). 

\begin{corollary}\label{Corollary.weakly.compact}
Let $\mathcal{B}$ be a Banach space and let $\delta$, $\sigma$, $\iota$ and $\alpha$ be the maps in (\ref{funtionals}). Let us assume that the set $\X\subset \mathcal{B}$ satisfies the following two conditions:
\begin{enumerate}
\item[(i)] $\X$ is a weakly compact subset of $\mathcal{B}$.
\item[(ii)] For each $g\in \mathcal{C}_{pl}(\X,d_{\mathcal{B}})$, its linear extension $\tilde g$ in (\ref{extension}) is continuous on $\text{\rm span}(\X)$.
\end{enumerate}
Then, the maps $\sigma$, $\iota$ and $\alpha$ are Hadamard directionally differentiable at $f\in \ell^\infty(\X)$ tangentially to $\mathcal{C}_{pl}(\X,d_{\mathcal{B}})$ with derivatives, for $g\in\mathcal{C}_{pl}(\X,d_{\mathcal{B}})$,
\begin{equation*}
\sigma^\prime_f(g)= \sup_{A_0^w(f)} g, \quad  \iota^\prime_f(g)= \inf_{B_0^w(f)} g\quad\text{and}\quad \alpha^\prime_f(g)=\sup_{A_0^w(f)} g -\inf_{B_0^w(f)} g,
\end{equation*}
where $A_0^w(\cdot)$ and $B_0^w(\cdot)$ are defined in (\ref{A0wB0w}).

If additionally $f\in \mathcal{C}_{pl}(\X,d_{\mathcal{B}})\setminus \{0\}$, then the derivatives of $\delta$, $\sigma$, $\iota$ and $\alpha$ are as in (\ref{derivative-continuous}).
\end{corollary}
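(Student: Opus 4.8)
The plan is to derive both assertions from the weak-sequential-compactness analogue of Corollary~\ref{Corollary.Compact} and then to show that hypothesis~(ii) forces every admissible direction $g\in\mathcal{C}_{pl}(\X,d_{\mathcal{B}})$ to be weakly sequentially continuous on $\X$, so that the refined quantities $g^{\filledtriangleup,w}_f$ and $g^{\filledtriangledown,w}_f$ collapse to $g$ itself. Concretely, the Hadamard directional differentiability of $\sigma,\iota,\alpha$ is already furnished by Theorem~\ref{Theorem.differentiability} tangentially to all of $\ell^\infty(\X)$ (the construction there applies verbatim to $f=0$, where $A_0^w(f)=B_0^w(f)=\X$), so differentiability tangentially to the smaller set $\mathcal{C}_{pl}(\X,d_{\mathcal{B}})$ is automatic and only the derivatives remain to be identified. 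By condition~(i) and the Eberlein--\v{S}mulian theorem $\X$ is weakly sequentially compact, so the proof of Corollary~\ref{Corollary.Compact} transcribes with the metric convergence $x_n\to x$ replaced by weak convergence $x_n\rightharpoonup x$; in particular $A_0^w(f)$ and $B_0^w(f)$ are nonempty and, for every $g\in\ell^\infty(\X)$,
\[
\sigma^\prime_f(g)=\sup_{A_0^w(f)} g^{\filledtriangleup,w}_f,\qquad \iota^\prime_f(g)=\inf_{B_0^w(f)} g^{\filledtriangledown,w}_f,\qquad \alpha^\prime_f(g)=\sigma^\prime_f(g)-\iota^\prime_f(g),
\]
with the objects as in (\ref{A0wB0w})--(\ref{h+wh-w}).

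The crux, and the one genuinely new step, is to prove weak sequential continuity of each $g\in\mathcal{C}_{pl}(\X,d_{\mathcal{B}})$. Being prelinear, $g$ extends by (\ref{extension}) to a linear map $\tilde g$ on $\mathrm{span}(\X)$; hypothesis~(ii) makes $\tilde g$ norm-continuous there, and the Hahn--Banach theorem extends it to an element of $\mathcal{B}^*$. A bounded linear functional is weakly continuous by the very definition of the weak topology, so $x_n\rightharpoonup x$ with $x_n,x\in\X$ yields $g(x_n)=\tilde g(x_n)\to\tilde g(x)=g(x)$. Hence for $x\in A_0^w(f)$ every sequence admissible in the definition of $g^{\filledtriangleup,w}_f(x)$ satisfies $\limsup_n g(x_n)=g(x)$, so $g^{\filledtriangleup,w}_f(x)=g(x)$, and likewise $g^{\filledtriangledown,w}_f(x)=g(x)$ on $B_0^w(f)$; substituting these into the display above gives the three stated derivatives. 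Everything outside this paragraph is a transcription of the compact-case arguments with $\to$ replaced by $\rightharpoonup$.

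For the final assertion I would assume in addition $f\in\mathcal{C}_{pl}(\X,d_{\mathcal{B}})\setminus\{0\}$ and apply the previous paragraph to $f$ itself: then $f$, and hence $|f|$, is weakly sequentially continuous, and weak sequential compactness guarantees that the extrema are attained. The elementary equivalence already used in Corollary~\ref{Corollary.Compact.2} then applies: the constant sequence gives $M^+(f)\subset A_0^w(f)$, while weak sequential continuity gives the reverse inclusion, so $A_0^w(f)=M^+(f)$, and similarly $B_0^w(f)=M^-(f)$ and $A_0^w(|f|)=M^+(|f|)$. For $\delta$, on $M^+(|f|)$ one has $|f(x)|=\Vert f\Vert_\infty>0$, so any $x_n\rightharpoonup x$ eventually satisfies $\sgn(f(x_n))=\sgn(f(x))$ together with $g(x_n)\to g(x)$, whence $(g\cdot\sgn(f))^{\filledtriangleup,w}_{|f|}(x)=g(x)\,\sgn(f(x))$; this produces the expression for $\delta^\prime_f$, and the formulas for $\sigma^\prime_f,\iota^\prime_f,\alpha^\prime_f$ in (\ref{derivative-continuous}) follow immediately.
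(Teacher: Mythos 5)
Your proposal is correct and follows essentially the same route as the paper: you invoke the Eberlein--\v{S}mulian weak-sequential-compactness analogue of Corollary~\ref{Corollary.Compact}, use hypothesis~(ii) together with Hahn--Banach to extend $\tilde g$ to an element of $\mathcal{B}^*$ so that $g$ is weakly sequentially continuous and $g^{\filledtriangleup,w}_f=g^{\filledtriangledown,w}_f=g$, and then identify $A_0^w(f)=M^+(f)$ (and its companions) when $f$ is itself prelinear and continuous. The extra details you supply --- the $f=0$ case for $\sigma,\iota,\alpha$ and the eventual-sign argument for $\delta$ --- are points the paper leaves implicit, and they are handled correctly.
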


\begin{proof}
As in the previous proofs, we only discuss the map $\sigma$. Let us consider $x\in A_0^w(f)$ (defined in (\ref{A0wB0w})) and $g\in \mathcal{C}_{pl}(\X,d_{\mathcal{B}})$. We consider a sequence $\{x_n\}\subset \X$ such that $x_n \rightharpoonup x$ and $f(x_n)\to \sup_\X f$ (the existence of such a sequence is guaranteed by condition (i) and (\ref{sup-sequences})). Condition (ii) and Hahn-Banach theorem imply that there exists a linear and continuous map, say $\bar g$, defined on $\mathcal{B}$ such that $\bar g= \tilde g$ on $\text{\rm span}(\X)$, and hence $\bar g=g$ on $\X$. As $\bar g\in \mathcal{B}^*$ and $x_{n}\rightharpoonup x$, we conclude that $\lim_{n\to\infty} g(x_{n})=g(x)$. This shows that $g^{\filledtriangleup,w}_f(x)=g(x)$, with $g^{\filledtriangleup,w}_f(x)$ defined as in (\ref{h+wh-w}), and the conclusion follows from the observation at the beginning of this section.

Finally, if $f\in \mathcal{C}_{pl}(\X,d_{\mathcal{B}})$, the same argument used before shows that $A_0^w(f)=M^+(f)$, where the set $M^+(\cdot)$ is defined in (\ref{M+M-}).
\end{proof}

We observe that hypothesis (i) in the previous corollary is essential to extract a weakly convergent subsequence in $\X$. We also observe that condition (ii) cannot be dropped as, in general, the linear extension $\tilde g$
of a function $g\in \mathcal{C}_{pl}(\X,d_{\mathcal{B}})$ is not necessarily continuous in $\text{span}(\X)$ as the following example shows: Let $\mathcal{B}$ be an infinite-dimensional Banach space with norm $\Vert \cdot \Vert$. We consider $\X=\{ x_n\}_{n=0}^\infty\subset \mathcal{B}$, where $x_0=0$ and $\{x_n\}_{n=1}^\infty$ is a linearly independent subset of $\mathcal{B}$ such that $\Vert x_n\Vert=1/n$ ($n\in\mathbb{N}$). It is easy to check that the function defined by $g(0)=0$ and $g(x_n)=1/\sqrt{n}$ ($n\in\mathbb{N}$) belongs to $\mathcal{C}_{pl}(\X,d_{\mathcal{B}})$, but its linear extension $\tilde g$ is \textit{not} continuous because it is not bounded on the unit sphere since $\tilde g(x_n/\Vert x_n\Vert)=\sqrt{n}$ ($n\in\mathbb{N}$).

The following proposition provides easy to check conditions guaranteeing that Corollary \ref{Corollary.weakly.compact} (ii) is fulfilled.

\begin{proposition}\label{Proposition.extension}
Let $\mathcal{B}$ be a Banach space with norm $\Vert \cdot \Vert$ and $\X\subset \mathcal{B}$.  Let us assume that one of the following two conditions is satisfied:
\begin{enumerate}
\item[(a)] There exists $x\in\X$ and $\delta>0$ such that $B(x,\delta):=\{  y\in\mathcal{B}: \Vert y-x \Vert \le \delta \}\subset \X$.

\item[(b)] $\mathcal{B}$ is a Hilbert space and there exists  $\{x_i\}_{i\in I}\subset  \X$, where $I$ is an arbitrary index set such that $\text{\rm span}(\X)=\text{\rm span}(\{x_i\}_{i\in I})$,  $\{x_i\}_{i\in I}$ are pairwise orthogonal and $c:=\inf_{i\in I} \Vert x_i\Vert>0$.
\end{enumerate}
Then, for each $g\in \mathcal{C}_{pl}(\X,d_{\mathcal{B}})$, its linear extension  $\tilde g$ in (\ref{extension}) is continuous on $\text{\rm span}(\X)$.
\end{proposition}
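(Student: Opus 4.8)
The plan is to reduce both cases to a single elementary fact: since the extension $\tilde g$ is linear on $\text{\rm span}(\X)$ (it is well defined precisely because $g$ is prelinear, by \citet[Lemma 2.30]{Dudley-1999}), it is continuous if and only if it is bounded, i.e.\ if and only if there is a constant $K$ with $|\tilde g(v)|\le K\Vert v\Vert$ for all $v\in\text{\rm span}(\X)$. The only analytic information available is that $g\in\mathcal{C}(\X,d_{\mathcal B})\subset\ell^\infty(\X)$, so $M:=\Vert g\Vert_\infty<\infty$; the whole exercise is to turn this uniform bound on the \emph{values} of $g$ into a bound on its linear \emph{extension}. I would handle (a) and (b) by different geometric arguments.

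For (a), the hypothesis that $\X$ contains a ball $B(x,\delta)$ does the work immediately. For every $y$ with $\Vert y\Vert\le\delta$ one has $x+y\in\X$, so $y=(x+y)-x\in\text{\rm span}(\X)$; in fact this already shows $\text{\rm span}(\X)=\mathcal{B}$. Using linearity and $\tilde g=g$ on $\X$ gives $\tilde g(y)=g(x+y)-g(x)$, whence $|\tilde g(y)|\le 2M$ on the whole ball of radius $\delta$. Positive homogeneity then yields $|\tilde g(v)|\le (2M/\delta)\Vert v\Vert$ for every $v$, so $K=2M/\delta$ works. This case should be routine.

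For (b) I would use the Hilbert geometry to reduce continuity to the existence of a Riesz representer. Put $e_i:=x_i/\Vert x_i\Vert$, an orthonormal system with $\text{\rm span}(\{e_i\})=\text{\rm span}(\{x_i\})=\text{\rm span}(\X)$, so each $v\in\text{\rm span}(\X)$ has a finite expansion $v=\sum_{i\in F}\mu_i e_i$ with $\Vert v\Vert^2=\sum_{i\in F}\mu_i^2$. Rewriting $v=\sum_{i\in F}(\mu_i/\Vert x_i\Vert)x_i$ and using prelinearity, $\tilde g(v)=\sum_{i\in F}\mu_i\,g(x_i)/\Vert x_i\Vert=\langle v,w\rangle$, where $w:=\sum_i (g(x_i)/\Vert x_i\Vert)\,e_i$ is the natural candidate representer. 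If $w\in\mathcal{H}$, then Cauchy--Schwarz gives $|\tilde g(v)|\le\Vert w\Vert\,\Vert v\Vert$ and we are done with $K=\Vert w\Vert$.

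The hard part, and the step on which I expect the whole argument to hinge, is showing $w\in\mathcal{H}$, that is, that the coordinate sequence is square-summable: $\sum_i g(x_i)^2/\Vert x_i\Vert^2<\infty$. This is exactly where the hypotheses must be spent: the lower bound $\Vert x_i\Vert\ge c>0$ controls the denominators and reduces the matter to $\sum_i g(x_i)^2<\infty$, while orthogonality makes the $x_i$ uniformly separated, $\Vert x_i-x_j\Vert^2=\Vert x_i\Vert^2+\Vert x_j\Vert^2\ge 2c^2$. When the index set $I$ is finite the summability is automatic and continuity is immediate; in the general case I would try to leverage the continuity of $g$ together with the geometry of $\X$ in the intended applications (where $\X$ is weakly compact, hence norm-bounded) to control the tail of $(g(x_i))_i$ and secure the required square-summability. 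Once $w\in\mathcal{H}$ is established the continuity bound $K=\Vert w\Vert$ follows at once, so this summability estimate for the representer is really the crux of the proof.
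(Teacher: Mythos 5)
Your treatment of case (a) is correct and complete: from $B(x,\delta)\subset\X$ and linearity you get $\tilde g(y)=g(x+y)-g(x)$ for $\Vert y\Vert\le\delta$, hence $|\tilde g(v)|\le (2\Vert g\Vert_\infty/\delta)\Vert v\Vert$ by homogeneity. The paper disposes of (a) even more briefly — since the ball lies inside $\X$, $\tilde g$ agrees with the continuous function $g$ on a norm-neighbourhood of $x$, so it is continuous at $x$ and therefore, being linear, continuous everywhere — but your boundedness argument is equally valid and in fact uses only $g\in\ell^\infty(\X)$ rather than continuity at $x$.

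For case (b) you have correctly located the crux — the square-summability $\sum_{i}g(x_i)^2/\Vert x_i\Vert^2<\infty$ needed to make your Riesz representer $w$ an element of $\mathcal{H}$ — and you leave it unproved, so as written the proposal does not establish (b). But the gap is not yours to close: for infinite $I$ the statement is false. Take $\mathcal{H}=\ell^2$, $\X=\{e_i\}_{i\ge1}$ the standard orthonormal basis (so $c=1$) and $g\equiv 1$ on $\X$; this $g$ is bounded, norm-continuous (the set is uniformly discrete) and prelinear (the $e_i$ are linearly independent), yet $\tilde g\bigl(\sum_{i\le n}e_i/n\bigr)=1$ while $\bigl\Vert\sum_{i\le n}e_i/n\bigr\Vert=1/\sqrt n\to0$, so $\tilde g$ is unbounded; the example survives even if one adjoins $0$ to make $\X$ weakly compact. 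The paper's own proof of (b) breaks down at exactly the step you flagged: it asserts $\Vert\sum_{i=1}^r\lambda_ix_i\Vert=\sum_{i=1}^r|\lambda_i|\,\Vert x_i\Vert$, whereas orthogonality gives the Pythagorean identity $\Vert\sum_i\lambda_ix_i\Vert^2=\sum_i\lambda_i^2\Vert x_i\Vert^2$, and $(\sum_i\lambda_i^2)^{1/2}$ does not dominate $\sum_i|\lambda_i|$, so the claimed bound $|\tilde g(x)|\le\Vert g\Vert_\infty\Vert x\Vert/c$ does not follow. The correct version of (b) requires either $I$ finite (then $\mathrm{span}(\X)$ is finite-dimensional and continuity is automatic) or the additional hypothesis $\sum_i g(x_i)^2/\Vert x_i\Vert^2<\infty$, which by your Riesz-representer argument is both necessary and sufficient. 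Your instinct that this summability "is really the crux" is exactly right; it simply cannot be derived from the stated hypotheses.
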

\begin{proof}
Let us assume that (a) holds. As $g\in \mathcal{C}(\X,d_{\mathcal{B}})$, the condition $B(x,\delta)\subset \X$ ensures that $\tilde g$ in (\ref{extension}) is continuous at $x$, and, by linearity, continuous on $\text{\rm span}(\X)$.

Assume now that (b) is satisfied. For $x\in \text{\rm span}(\X)$, we can write $x=\sum_{i=1}^r \lambda_i x_i$, with $\lambda_i\in\R$ $(i=1,\dots,r)$. Taking into account that $\Vert x\Vert = \sum_{i=1}^r   |\lambda_i| \Vert x_i\Vert \ge  c \sum_{i=1}^r   |\lambda_i|  $, we finally obtain that
\begin{equation*}
|\tilde g(x)|\le   \Vert g\Vert_\infty  \sum_{i=1}^r |\lambda_i| \le \Vert g\Vert_\infty \Vert x \Vert /c.
\end{equation*}
The previous inequalities show that $\tilde g$ is continuous on $\text{\rm span}(\X)$ and the proof is complete.
\end{proof}

Closed bounded convex subsets of a reflexive Banach space are weakly compact (see \citet[Corollary 3.22]{Brezis}). Therefore, the hypotheses of Corollary \ref{Corollary.weakly.compact} are general enough to include many infinite-dimensional sets. Thanks to Proposition \ref{Proposition.extension}, an important example covered by Corollary \ref{Corollary.weakly.compact} is when $\X$ is the closed unit ball of a reflexive Banach space, and, in particular, the closed unit ball of a Hilbert space. On the other hand, working with prelinear functions could seem to be too restrictive. However, we point out that if $\P$ is a probability measure and a set $\X$ is $\P$-pre-Gaussian (\citet[Definition 3.7.26, p. 251]{Gine-Nickl}), there is a version of the $\P$-bridge whose sample paths are prelinear (see \citet[Theorem 3.7.28, p. 252]{Gine-Nickl}). Such a version is usually called \textit{suitable}.

\begin{remark}\label{Remark.Full.Weakly}
Corollary \ref{Corollary-Full-differentiability} still holds with the obvious modifications if $\X$ is in the conditions of Corollary \ref{Corollary.weakly.compact}. It is enough to replace convergence with weak convergence and $\mathcal{C}(\X,d)$, $A_0(\cdot)$ and $B_0(\cdot)$ with $\mathcal{C}_{pl}(\X,d_\mathcal{B})$, $A_0^w(\cdot)$ and $B_0^w(\cdot)$, respectively.

\end{remark}

\subsection{The case $\X=\bar \R^d$ and the Skorohod space $\D(\bar\R^d)$}\label{Subection.Skorohod.Space}


Throughout this section $\X=\bar \R^d$ ($d\ge 1$) endowed with $d_e$, the metric corresponding to the Euclidean norm on $[0,1]^d$ through a given homeomorphism. Hence, $(\bar \R^d,d_e)$ is a compact metric space and we can apply Corollaries \ref{Corollary.Compact} and \ref{Corollary.Compact.2} in Section \ref{Subsection.Compact.Spaces}.

Many important stochastic processes take values in the one-dimensional Skorohod space, $\mathcal{D}(\bar\R)$, consisting of all the \textit{c\`{a}dl\`{a}g} functions, that is, right-continuous functions having limit from the left at every point. This space provides a natural and convenient setting to analyze the behaviour of processes with unidimensional time parameter and jumps in their paths such as Poisson processes, L\'evy processes, empirical processes or discretizations of stochastic processes, among others. Skorohod-type spaces are usually equipped with different norms to make them separable. However, we are only interested in a multidimensional extension of the Skorohod space viewed as a subset of $\ell^\infty(\bar  \R^d)$ with the supremum norm.
The final aim of this section is providing alternative expressions for the directional derivatives in (\ref{derivatives-2}) when the involved functions belong to the $d$-dimensional Skorohod space.

The $d$-dimensional Skorohod space, introduced in \cite{Neuhaus} (see also \cite{Bickel-Wichura}) and more recently considered in \cite{Seijo-Sen}), is usually defined in compact rectangles of $\R^d$. We will firstly extend this space to functions defined in $\bar\R^d$.

For $v\in\{-1,1\}$ and $x\in\bar\R$, let
\begin{equation*}
I_v(x):=
\begin{cases}
\emptyset, & \text{if }v=-1,\, x=-\infty,\\
[-\infty,x), & \text{if }v=-1,\, x\in(-\infty,+\infty],\\
(x,+\infty], & \text{if }v=+1,\, x\in [-\infty,+\infty),\\
\emptyset, & \text{if }v=+1,\, x=+\infty,\\
\end{cases}
\end{equation*}
and
\begin{equation*}
\tilde I_v(x):=
\begin{cases}
[-\infty,x), & \text{if }v=-1,\, x<\infty,\\
\bar\R, & \text{if }v=-1,\, x=+\infty,\\
\emptyset, & \text{if }v=+1,\, x=+\infty,\\
[x,+\infty], & \text{if }v=+1,\, x<\infty.\\
\end{cases}
\end{equation*}
We consider $\mathcal{V}:=\{-1,1\}^d$ the set of $2^d$ vertices of $[-1,1]^d$. For $\mathbf{v}=(v_1,\dots,v_d)\in \mathcal{V}$ and $\mathbf{x}=(x_1,\dots,x_d)\in \bar\R^d$, we define the $\mathbf{v}$-quadrants of $\mathbf{x}$ by
\begin{equation*}
Q_{\mathbf{v}}(\mathbf{x}):=I_{v_1}(x_1)\times\cdots\times I_{v_d}(x_d)\quad\text{and}\quad \tilde Q_{\mathbf{v}}(\mathbf{x}):=\tilde I_{v_1}(x_1)\times\cdots\times \tilde I_{v_d}(x_d).
\end{equation*}
Observe that $Q_{\mathbf{v}}(\mathbf{x})\subset \tilde Q_{\mathbf{v}}(\mathbf{x})$, $\tilde Q_{\mathbf{v}}(\mathbf{x})\cap \tilde Q_{\mathbf{v}'}(\mathbf{x})=\emptyset$ whenever $\mathbf{v}$, $\mathbf{v}'\in\mathcal{V}$ with $\mathbf{v}\ne \mathbf{v}'$, and $\cup_{\mathbf{v}\in\mathcal{V}} \tilde Q_{\mathbf{v}}(\mathbf{x})=\bar\R^d$, for all $\mathbf{x}\in \bar\R^d$. Additionally, for each $\mathbf{x}\in \bar\R^d$, there exists a unique $\mathbf{v}_\mathbf{x}\in\mathcal{V}$ such that $\mathbf{x}\in \tilde Q_{\mathbf{v}_\mathbf{x}}(\mathbf{x})$. For instance, if $\mathbf{x}\in\R^d$, we have that $\mathbf{v}_\mathbf{x}=\mathbf{1}$, where $\mathbf{1}:=(1,\dots,1)$.

With the previous concepts we can define the quadrant limits. Let us consider a function $f:\bar\R^d\longrightarrow\R$, $\mathbf{v}\in\mathcal{V}$ and $\mathbf{x}\in \bar\R^d$. We say that $l\in\R$ is the $\mathbf{v}$-limit of $f$ at $\mathbf{x}$ if $Q_{\mathbf{v}}(\mathbf{x})\ne \emptyset$ and for every sequence $\{\mathbf{x}_n\}\subset Q_{\mathbf{v}}(\mathbf{x})$ such that $\mathbf{x}_n\to \mathbf{x}$, we have that $f(\mathbf{x}_n)\to l$. In such a case, we  denote $l\equiv f_\mathbf{v}(\mathbf{x})$. Additionally, it is said that $f$ is continuous from above at $\mathbf{x}\in \bar\R^d$ if $f_{\mathbf{v}_\mathbf{x}}(\mathbf{x})$ exists and $f_{\mathbf{v}_\mathbf{x}}(\mathbf{x})=f(\mathbf{x})$. We say that $f$ is continuous from above  if it is continuous from above at every $\mathbf{x}\in\bar\R^d$.

\begin{definition}
The {\rm Skorohod space} on $\bar\R^d$, denoted by $\D(\bar\R^d)$, is the collection of all continuous from above real functions $f$ defined in $\bar\R^d$ for which the $\mathbf{v}$-limit of $f$ exists for every $\mathbf{v}\in\mathcal{V}$ and $\mathbf{x}\in \bar\R^d$ such that $Q_{\mathbf{v}}(\mathbf{x})\ne\emptyset$.
\end{definition}

When $d=1$, $\D(\bar\R)$ is usual Skorohod space on $\bar \R$. The properties of the multidimensional Skorohod space in $[0,1]^d$ shown in \cite{Neuhaus} can be extended with no difficulty to $\D(\bar\R^d)$.
For instance, the elements in $\D(\bar\R^d)$ belong to $\D(\bar\R)$ in each coordinate, have at most countably many discontinuities and all of them are of the ``first class". The fact that $\D(\bar\R^d)\subset \ell^\infty(\bar\R^d)$ follows from \citet[Corollary 1.6]{Neuhaus} by noting that functions in $\D(\bar\R^d)$ have finite quadrant limits at infinity points.

\begin{remark}\label{Remark.quadrants}
We observe that if $f\in\D(\bar\R^d)$ and $\{\mathbf{x}_n\}\subset \tilde Q_{\mathbf{v}}(\mathbf{x})$ such that $\mathbf{x}_n\to \mathbf{x}$, then $f(\mathbf{x}_n)\to f_{\mathbf{v}}(\mathbf{x})$. This follows from the fact that
\begin{equation*}
\tilde Q_\mathbf{v}(\mathbf{x})=\left\{   \mathbf{y}\in\bar\R^d :  \mathbf{y}\in \overline{Q_{\mathbf{v}_\mathbf{y}}(\mathbf{y})\cap Q_\mathbf{v}  (\mathbf{x})}    \right\},
\end{equation*}
where $\bar A$ denotes the closure of the set $A$. In other words, the functions in $\D(\bar\R^d)$ have quadrant limits in $\tilde Q_{\mathbf{v}}(\mathbf{x})$.
\end{remark}



We are now in position to see how the derivatives in (\ref{derivatives-2}) look like when $\X=\bar \R^d$ and the functions on which they act belong to $\D(\bar\R^d)$.

\begin{corollary}\label{Corollary.Skorohod}

For any  $f\in \D(\bar\R^d)\setminus \{0\}$, the maps $\delta$, $\sigma$, $\iota$ and $\alpha$ in (\ref{funtionals}) are Hadamard directionally differentiable at $f$ tangentially to $\D(\bar\R^d)$. For $g\in\D(\bar\R^d)$, their derivatives are given by
\begin{equation}\label{derivatives-Skorohod}
\begin{aligned}
\delta^\prime_f(g)&= \max_{\mathbf{v}\in\mathcal{V}}\sup_{M_\mathbf{v}^+(|f|)}\left( g_\mathbf{v}\cdot \sgn(f_\mathbf{v})\right),\quad   &  \sigma^\prime_f(g)&=\max_{\mathbf{v}\in\mathcal{V}}\sup_{M^+_\mathbf{v}(f)} g_\mathbf{v},\\[2 mm]
\iota^\prime_f(g) &= \min_{\mathbf{v}\in\mathcal{V}}\inf_{M^-_\mathbf{v}(f)} g_\mathbf{v},\quad   &   \alpha^\prime_f(g)&=\max_{\mathbf{v}\in\mathcal{V}}\sup_{M^+_\mathbf{v}(f)} g_\mathbf{v}- \min_{\mathbf{v}\in\mathcal{V}}\inf_{M^-_\mathbf{v}(f)} g_\mathbf{v},
\end{aligned}
\end{equation}
where for $h\in\D(\bar\R^d)$,
\begin{equation}\label{Mf+}
\begin{split}
M_\mathbf{v}^+(h)&:=\left\{ \mathbf{x}\in\  \bar\R^d : Q_{\mathbf{v}}(\mathbf{x})\ne \emptyset \text{ and }h_\mathbf{v}(\mathbf{x})=\sup h \right\},\\
M_\mathbf{v}^-(h)&:=\left\{ \mathbf{x}\in\  \bar\R^d : Q_{\mathbf{v}}(\mathbf{x})\ne \emptyset \text{ and }h_\mathbf{v}(\mathbf{x})=\inf h \right\}.
\end{split}
\end{equation}

\end{corollary}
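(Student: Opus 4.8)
The plan is to reduce everything to Corollary~\ref{Corollary.Compact}, which applies verbatim here since $(\bar\R^d,d_e)$ is a compact metric space and $\D(\bar\R^d)\subset\ell^\infty(\bar\R^d)$. As in the proof of Theorem~\ref{Theorem.differentiability}, it suffices to treat $\sigma$: the formula for $\iota$ will follow by duality (apply the $\sigma$-computation to $-f$ and use $(-f)_\mathbf{v}=-f_\mathbf{v}$, so that $M^-_\mathbf{v}(f)=M^+_\mathbf{v}(-f)$), $\alpha$ is the difference of the two, and $\delta$ is obtained from $\sigma$ applied to $|f|$ in the direction $g\cdot\sgn(f)$, noting $|f|\in\D(\bar\R^d)$ and $|f|_\mathbf{v}(x)=|f_\mathbf{v}(x)|$.

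First I would record the one structural fact that drives the whole argument. Fix $x\in\bar\R^d$. Because $\mathcal{V}=\{-1,1\}^d$ is finite and $\{\tilde Q_\mathbf{v}(x):\mathbf{v}\in\mathcal{V}\}$ is a finite partition of $\bar\R^d$ (with $\tilde Q_\mathbf{v}(x)\ne\emptyset$ exactly when $Q_\mathbf{v}(x)\ne\emptyset$), every sequence $x_n\to x$ admits a subsequence lying entirely inside a single $\tilde Q_\mathbf{v}(x)$ with $Q_\mathbf{v}(x)\ne\emptyset$. By Remark~\ref{Remark.quadrants}, along such a subsequence one has simultaneously $f(x_n)\to f_\mathbf{v}(x)$ and $g(x_n)\to g_\mathbf{v}(x)$. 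Conversely, whenever $Q_\mathbf{v}(x)\ne\emptyset$ the point $x$ lies in $\overline{Q_\mathbf{v}(x)}$, so there is a sequence $x_n\in Q_\mathbf{v}(x)$ with $x_n\to x$, and again $f(x_n)\to f_\mathbf{v}(x)$ and $g(x_n)\to g_\mathbf{v}(x)$.

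With this reduction in hand, I would identify the two ingredients of Corollary~\ref{Corollary.Compact} for $\sigma$. For the membership set, $x\in A_0(f)$ holds iff some approaching sequence drives $f$ to $\sup f$; by the reduction this happens iff $f_\mathbf{v}(x)=\sup f$ for some admissible $\mathbf{v}$, that is, $A_0(f)=\bigcup_{\mathbf{v}\in\mathcal{V}} M^+_\mathbf{v}(f)$. For the upper value, every admissible approaching sequence is, up to a subsequence, a pure $\mathbf{v}$-sequence along which $f(x_n)\to f_\mathbf{v}(x)$ must equal $\sup f$ and $\limsup g(x_n)=g_\mathbf{v}(x)$; hence
\[
g^\filledtriangleup_f(x)=\max\bigl\{\,g_\mathbf{v}(x):Q_\mathbf{v}(x)\ne\emptyset,\ f_\mathbf{v}(x)=\sup f\,\bigr\},\qquad x\in A_0(f).
\]
Substituting into $\sigma^\prime_f(g)=\sup_{A_0(f)}g^\filledtriangleup_f$ and interchanging the supremum over $x$ with the finite maximum over $\mathbf{v}$ yields exactly $\sigma^\prime_f(g)=\max_{\mathbf{v}\in\mathcal{V}}\sup_{M^+_\mathbf{v}(f)}g_\mathbf{v}$.

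For $\delta$ the only extra point---and the place requiring care---is the behaviour of $\sgn$. Using the $\delta$-formula of Corollary~\ref{Corollary.Compact}, $\delta^\prime_f(g)=\sup_{A_0(|f|)}(g\cdot\sgn(f))^\filledtriangleup_{|f|}$. At any $x\in M^+_\mathbf{v}(|f|)$ one has $|f_\mathbf{v}(x)|=\Vert f\Vert_\infty>0$, so $f_\mathbf{v}(x)\ne 0$ and along a $\mathbf{v}$-sequence $f(x_n)\to f_\mathbf{v}(x)$ keeps a constant sign for large $n$; therefore $g(x_n)\sgn(f(x_n))\to g_\mathbf{v}(x)\sgn(f_\mathbf{v}(x))$, and the quadrant-reduction argument applies to the product exactly as before. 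I expect this sign-stability step, together with the careful distinction between $Q_\mathbf{v}(x)$ and $\tilde Q_\mathbf{v}(x)$ in the reduction, to be the main (though minor) obstacle; everything else is the routine bookkeeping of passing from convergent sequences to quadrant limits.
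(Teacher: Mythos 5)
Your proposal is correct and follows essentially the same route as the paper: the paper's own (very brief) proof reduces to Corollary \ref{Corollary.Compact} via Remark \ref{Remark.quadrants} and the observation that any convergent sequence has a subsequence in a single quadrant $\tilde Q_{\mathbf{v}}(\mathbf{x})$, giving $A_0(h)=\cup_{\mathbf{v}\in\mathcal{V}} M_\mathbf{v}^+(h)$ and $B_0(h)=\cup_{\mathbf{v}\in\mathcal{V}} M_\mathbf{v}^-(h)$. Your write-up simply fills in the details the paper leaves implicit (the identification of $g^\filledtriangleup_f$, the sign-stability for $\delta$, and the duality for $\iota$ and $\alpha$), all of which check out.
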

\begin{proof}
This corollary can be proved as Corollary \ref{Corollary.Compact} by taking into account Remark \ref{Remark.quadrants} and the following fact: As the number of non-empty quadrants of each point in $\bar\R^d$ is finite, each sequence converging to a point $\mathbf{x}\in \bar\R^d$ has a subsequence contained in $\tilde Q_{\mathbf{v}}(\mathbf{x})$, for some $\mathbf{v}\in \mathcal{V}$. In particular, for every $h\in\D(\bar\R^d)$, it holds that $A_0(h)=\cup_{\mathbf{v}\in\mathcal{V}} M_\mathbf{v}^+(h)$ and $B_0(h)=\cup_{\mathbf{v}\in\mathcal{V}} M_\mathbf{v}^-(h)$, where $A_0(h)$ and $B_0(h)$ are defined in (\ref{A0B0}).
\end{proof}

The sets $M_\mathbf{v}^+(h)$  (respectively, $M_\mathbf{v}^-(h)$) in (\ref{Mf+}) might coincide for different $\mathbf{v}\in\mathcal{V}$. For instance, when $f$ is continuous,  $M_\mathbf{v}^+(|f|)=M^+(|f|)$, $M_\mathbf{v}^+(f)=M^+(f)$, and $M_\mathbf{v}^-(f)=M^-(f)$, for all $\mathbf{v}\in\mathcal{V}$, where $M^+(\cdot)$ and $M^-(\cdot)$ are defined in (\ref{M+M-}).

We emphasize that $g_\mathbf{v}\equiv g$, for all $\mathbf{v}\in\mathcal{V}$, whenever $g\in \mathcal{C}(\bar\R^d,d_e)$. The following corollary is important for applications because many stochastic processes that commonly appear as weak limits of other processes have continuous paths a.s.

\begin{corollary}\label{Corollary.Skorohod-2}
For any $f\in \D(\bar\R^d)\setminus \{0\}$, the maps $\delta$, $\sigma$, $\iota$ and $\alpha$ in (\ref{funtionals}) are Hadamard directionally differentiable at $f$ tangentially to $\mathcal{C}(\bar\R^d,d_e)$. For $g\in\mathcal{C}(\bar\R^d,d_e)$, their derivatives are given by
\begin{equation*}
\begin{aligned}
\delta^\prime_f(g) & =  \max_{\mathbf{v}\in\mathcal{V}}\sup_{M_\mathbf{v}^+(|f|)}\left( g \cdot \sgn(f_\mathbf{v})\right) ,\quad   &  \sigma^\prime_f(g) &= \max_{\mathbf{v}\in\mathcal{V}}\sup_{M^+_\mathbf{v}(f)} g,\\[2 mm]
\iota^\prime_f(g) &=  \min_{\mathbf{v}\in\mathcal{V}}\inf_{M^-_\mathbf{v}(f)} g,\quad   &   \alpha^\prime_f(g) &= \max_{\mathbf{v}\in\mathcal{V}}\sup_{M^+_\mathbf{v}(f)} g- \min_{\mathbf{v}\in\mathcal{V}}\inf_{M^-_\mathbf{v}(f)}g,
\end{aligned}
\end{equation*}
with $M_\mathbf{v}^+(\cdot)$ and $M_\mathbf{v}^-(\cdot)$ defined in (\ref{Mf+}).

If additionally $f\in\mathcal{C}(\bar\R^d,d_e)$, the derivatives are as in (\ref{derivative-continuous}).
\end{corollary}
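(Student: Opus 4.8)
The plan is to deduce this corollary directly from Corollary \ref{Corollary.Skorohod} by restricting the set of tangent directions from $\D(\bar\R^d)$ to its subset of continuous functions. First I would observe that $\mathcal{C}(\bar\R^d,d_e)\subset \D(\bar\R^d)$: a bounded continuous function is trivially continuous from above and, for every $\mathbf{x}\in\bar\R^d$ and $\mathbf{v}\in\mathcal{V}$ with $Q_{\mathbf{v}}(\mathbf{x})\ne\emptyset$, its $\mathbf{v}$-limit exists and equals the value of the function at $\mathbf{x}$. Since Hadamard directional differentiability at $f$ tangentially to a set remains valid when the tangency set is shrunk (the derivative being merely restricted), Corollary \ref{Corollary.Skorohod} immediately yields the differentiability of $\delta,\sigma,\iota,\alpha$ at $f\in\D(\bar\R^d)\setminus\{0\}$ tangentially to $\mathcal{C}(\bar\R^d,d_e)$, with derivatives obtained by evaluating the formulas in (\ref{derivatives-Skorohod}) on continuous $g$.

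The second step is the simplification of those formulas. For $g\in\mathcal{C}(\bar\R^d,d_e)$ the sequential characterisation of continuity gives $g_\mathbf{v}(\mathbf{x})=g(\mathbf{x})$ for every $\mathbf{x}\in\bar\R^d$ and every admissible $\mathbf{v}$, because any sequence in a quadrant converging to $\mathbf{x}$ forces $g$ to converge to $g(\mathbf{x})$. Substituting $g_\mathbf{v}=g$ into (\ref{derivatives-Skorohod}) --- while keeping $\sgn(f_\mathbf{v})$, which still genuinely depends on $\mathbf{v}$ since $f$ need not be continuous --- produces exactly the four expressions claimed in the statement. In particular the suprema and infima over $M_\mathbf{v}^+(\cdot)$ and $M_\mathbf{v}^-(\cdot)$ in (\ref{Mf+}) are now taken of $g$ itself rather than of its quadrant limits.

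Finally, for the last assertion I would invoke the observation following Corollary \ref{Corollary.Skorohod}: when $f\in\mathcal{C}(\bar\R^d,d_e)$ one has $f_\mathbf{v}=f$, and the index sets coincide, $M_\mathbf{v}^+(|f|)=M^+(|f|)$, $M_\mathbf{v}^+(f)=M^+(f)$ and $M_\mathbf{v}^-(f)=M^-(f)$ for every $\mathbf{v}\in\mathcal{V}$, with $M^+(\cdot)$ and $M^-(\cdot)$ as in (\ref{M+M-}). Consequently every term inside $\max_{\mathbf{v}}$ and $\min_{\mathbf{v}}$ is identical, the outer extrema over $\mathbf{v}$ collapse, and the four derivatives reduce to those in (\ref{derivative-continuous}). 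I do not expect any genuine obstacle here: the whole argument is a specialisation of Corollary \ref{Corollary.Skorohod}, and the only point demanding (routine) care is the verification that continuity collapses all quadrant limits of $g$ to a single value, which is immediate from the definition of $g_\mathbf{v}$.
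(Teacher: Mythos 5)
Your proposal is correct and matches the paper's (implicit) argument: the paper states this corollary without a separate proof, relying precisely on the remark that $g_\mathbf{v}\equiv g$ for continuous $g$ applied to the formulas of Corollary \ref{Corollary.Skorohod}, and on the observation that $M_\mathbf{v}^{+}$ and $M_\mathbf{v}^{-}$ collapse to $M^{+}$ and $M^{-}$ when $f$ is continuous. The restriction of the tangency set from $\D(\bar\R^d)$ to its subset $\mathcal{C}(\bar\R^d,d_e)$ is indeed automatic, so there is nothing further to verify.
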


\subsection{Statistical applications}\label{Subsection.Statistical.Applications}

In a wide variety of situations Theorem \ref{Theorem.differentiability} and its subsequent corollaries, joint with the extended Delta method in Proposition \ref{Proposition.delta}, provide the right framework to obtain a number of significant examples in which the asymptotic distribution of a statistic of interest can be determined with ease. The combination of these results is summarized in the following theorem.

\begin{theorem}\label{Theorem.main}
Let $q\in \ell^\infty(\X)\setminus \{ 0 \}$ and assume that there exists $\mathbb{Q}_n$ taking values in $\ell^\infty(\X)$ a.s. such that $r_n(\mathbb{Q}_n-q)\rightsquigarrow \mathbb{Q}$, for a sequence of real numbers satisfying that $r_n\to\infty$ and a Borel random element $\mathbb{Q}$ in $\ell^\infty(\X)$. Then, for $\phi\in\{\delta,\sigma,\iota, \alpha\}$ in (\ref{funtionals}), we have that
\begin{equation}
r_n(\phi(\mathbb{Q}_n)-\phi(q))\cd \phi_q^\prime(\mathbb{Q}),
\end{equation}
where the derivatives $\phi_q^\prime$ are given in (\ref{derivatives}). Moreover, we have that $r_n(\phi(\mathbb{Q}_n)-\phi(q))=\phi_q^\prime(r_n(\mathbb{Q}_n-q))+o_\P(1)$.
\end{theorem}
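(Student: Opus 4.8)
The plan is to read this theorem as the direct conjunction of the two pillars already established: the Hadamard directional differentiability of the four functionals (Theorem \ref{Theorem.differentiability}) and the extended functional Delta method (Proposition \ref{Proposition.delta}). First I would verify that all hypotheses of Proposition \ref{Proposition.delta} are in place, with the specialization $\mathcal{D}=\ell^\infty(\X)$ and $\mathcal{E}=\R$. By Theorem \ref{Theorem.differentiability}, since $q\in\ell^\infty(\X)\setminus\{0\}$, each $\phi\in\{\delta,\sigma,\iota,\alpha\}$ is Hadamard directionally differentiable at $q$; crucially, its derivative $\phi_q^\prime$ is defined on all of $\ell^\infty(\X)$, so the tangent set is the entire space $\mathcal{D}_0=\ell^\infty(\X)$. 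Consequently the limit $\mathbb{Q}$, which takes values in $\ell^\infty(\X)$ by assumption, automatically lies in $\mathcal{D}_0$, and no extra restriction on the support of $\mathbb{Q}$ is needed.

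Next I would invoke Proposition \ref{Proposition.delta} with $T_n=\mathbb{Q}_n$, $\theta=q$, $T=\mathbb{Q}$ and $r_n\to\infty$. This immediately yields $r_n(\phi(\mathbb{Q}_n)-\phi(q))\rightsquigarrow \phi_q^\prime(\mathbb{Q})$. Since the functionals take values in $\R$, weak convergence in the sense of Hoffmann-J{\o}rgensen reduces to ordinary convergence in distribution of real random variables, so the arrow may be written as $\cd$, and the expressions for $\phi_q^\prime$ are exactly those recorded in (\ref{derivatives}).

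For the asymptotic linearization I would check the additional hypothesis of Proposition \ref{Proposition.delta}, namely that $\phi_q^\prime$ admits a continuous extension to $\mathcal{D}$. Here this is immediate: by \citet[Proposition 3.1]{Shapiro-1990} the directional derivative $\phi_q^\prime$ is already continuous (and positively homogeneous of degree one) on the whole domain $\mathcal{D}_0=\mathcal{D}=\ell^\infty(\X)$, so it is trivially its own continuous extension. The second conclusion of Proposition \ref{Proposition.delta} then gives $r_n(\phi(\mathbb{Q}_n)-\phi(q))=\phi_q^\prime(r_n(\mathbb{Q}_n-q))+o_\P(1)$. I do not expect any genuine obstacle here: all the analytic difficulty was absorbed into Theorem \ref{Theorem.differentiability}, and the only points meriting care are the bookkeeping ones just mentioned, that the tangent set is the full space and that the real-valuedness turns $\rightsquigarrow$ into $\cd$.
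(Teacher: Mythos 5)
Your proposal is correct and follows exactly the route the paper intends: Theorem \ref{Theorem.main} is presented there precisely as the combination of Theorem \ref{Theorem.differentiability} (which gives Hadamard directional differentiability at every $q\neq 0$ with tangent set all of $\ell^\infty(\X)$) and Proposition \ref{Proposition.delta}, with the continuity of $\phi_q^\prime$ from \citet[Proposition 3.1]{Shapiro-1990} supplying the extension hypothesis for the asymptotic linearization. Nothing is missing.
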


Theorem \ref{Theorem.main} is still valid for the maps $\sigma$, $\iota$ and $\alpha$ when $q\equiv 0$ as $\sigma_0^\prime(g)\equiv\sup_{\X} g$, $\iota_0^\prime(g)\equiv\inf_{\X} g$ and $\alpha_0^\prime(g)\equiv\amp_{\X}(g)$ are continuous maps. Further, for those $q\in\ell^\infty(\X)$ such that $\phi_q^\prime$ is linear, i.e., $\phi$ is fully Hadamard differentiable at $q$ (see Corollary \ref{Corollary-Full-differentiability} and Remarks \ref{Remark.Full.Totally.bounded} and \ref{Remark.Full.Weakly}), and when $\mathbb{Q}$ is Gaussian, we conclude that $\phi_q^\prime(\mathbb{Q})$ is normally distributed.

In this setting, despite $\phi\in\{\delta,\sigma,\iota, \alpha\}$ in (\ref{funtionals}) not being uniformly Hadamard diffe\-ren\-tiable (see Example \ref{Example-not-uniform}), if we know that $q_n\to q$ in $\ell^\infty(\X)$ and $r_n(\mathbb{Q}_n-q_n)\rightsquigarrow \mathbb{Q}$, we can still conclude that $r_n(\phi(\mathbb{Q}_n)-\phi(q_n))\cd \phi_q^\prime(\mathbb{Q})$ if we assume: (i) $\phi$ is fully Hadamard diffe\-ren\-tiable; (ii) the sequence $r_n(q_n-q)$ is relatively compact. Further, if (i) is not satisfied and $r_n(q_n-q)\to h$, we still have that $r_n(\phi(\mathbb{Q}_n)-\phi(q_n))\to_d  \phi_q^\prime(\mathbb{Q}+h) - \phi_q^\prime(h)$. See \citet[p. 375]{van der Vaart-Wellner} for details.

In what follows we will apply Theorem \ref{Theorem.main} in different contexts to obtain the asymptotic distribution of several statistics.


\section{Distribution functions}\label{Section.Distribution.Functions}

Let $\mathbf{X}$ and $\mathbf{Y}$ be two non-degenerate random vectors ta\-king values on $\R^d$ ($d\ge 1$) with joint cumulative distribution functions $F(\x):=\P(\mathbf{X}\le \x)$ and $G(\x):=\P(\mathbf{Y}\le \x)$, $\x\in\R^d$, where `$\le$' stands for the coordinatewise order in $\R^d$. The goal in this section is to estimate $\phi(F-G)$, where $\phi\in\{\delta, \sigma, \alpha \}$ are defined in (\ref{funtionals}).

\medskip

\noindent \textbf{\textsl{One-sample case:}} In this situation we have at our disposal a random sample $\mathbf{X}_1,\dots,\mathbf{X}_n$ from $\mathbf{X}$. We estimate $F-G$ with $F_n-G$, where $F_n$ is the empirical distribution function of the observed sample, that is,
\begin{equation*}
{F}_n(\x):=\frac{1}{n}\sum_{i=1}^n 1_{\{\mathbf{X}_i\le \x\}}, \quad \x\in\R^d,
\end{equation*}
and $1_A$ stands for the indicator function of the set $A$.

The problem consists in finding the behaviour, as $n\to\infty$, of
\begin{equation}\label{Dn}
\begin{split}
D_n(\delta)&= \sqrt{n}\, \left(\Vert{F}_n-G\Vert_\infty-\Vert F-G\Vert_\infty\right),\\
D_n(\sigma)&=\sqrt{n}\,\left(   \sup ({F}_n-G) - \sup (F-G) \right)\\
D_n(\alpha)&=\sqrt{n}\, \left(   \amp({F}_n-G) - \amp(F-G) \right).
\end{split}
\end{equation}
When $F\not=G$, the asymptotic distribution of the statistics $D_n(\delta)$, $D_n(\sigma)$ and $D_n(\alpha)$ in (\ref{Dn}) can be viewed as the limit under the alternative hypothesis of the corresponding two-sided and one-sided Kolmogorov-Smirnov test statistics and Kuiper statistic, respectively.

In this example, for $\phi\in\{\delta, \sigma, \alpha \}$, the statistics in (\ref{Dn}) are $D_n(\phi)\equiv D_\phi (q,\mathbb{Q}_n,r_n)$ in (\ref{problem}) with $q=F-G$, $\mathbb{Q}_n=F_n-G$, and $r_n=\sqrt{n}$.
The underlying normalized process, i.e., $r_n(\mathbb{Q}_n-q)$, is nothing but the multivariate \emph{empirical process} (indexed by points),
\begin{equation}\label{empirical process}
\Emp_{n,F}(\x):=\sqrt{n}({F}_n(\x)-F(\x)),\quad n\in \mathbb{N},\quad \x\in \R^d.
\end{equation}
When there is no confusion with respect to the underlying distribution, we simply use the notation $\Emp_{n}$ for the empirical process in (\ref{empirical process}).
As the collection of all indicator functions of lower (hyper)rectangles of $\bar\R^d$, $\{ 1_{(-\infty, x_1]\times\cdots\times (-\infty, x_d]} : (x_1,\dots,x_d)\in\bar\R^d \}$, is Donsker (see \citet[Example 2.1.3, p. 82]{van der Vaart-Wellner}), the empirical process converges in law in $\ell^\infty(\bar\R^d)$. The weak limit of $\Emp_n$, denoted in the following by $\B_F$, is a \textit{$F$-Brownian bridge}, that is, a centered Gaussian process with covariance function $\E (\B_F(\x)\B_F(\mathbf{y}))=F(\x\wedge\mathbf{y})-F(\x)F(\mathbf{y})$. (Here $\x\wedge\mathbf{y}\equiv(x_1\wedge y_1,\dots,x_d\wedge y_d)$ if $\x=(x_1,\dots,x_d)$ and $\mathbf{y}=(y_1,\dots,y_d)$.) If $d=1$, the assertion ``$\Emp_n \rightsquigarrow \B_F$ in $\ell^\infty(\bar\R)$" is nothing but the celebrated Donsker's theorem (Kolmogorov-Doob-Donsker-Dudley central limit theorem). In such a case, $\B_F=\B\circ F$, where $\B$ is a standard Brownian bridge on $[0,1]$. When $d\ge 2$, $\B_F$ is also called a tied-down or pinned Brownian sheet based on the measure with distribution function $F$.





In this particular case we have that $F-G\in \D(\bar\R^d)$, $\Emp_n\in\D(\bar\R^d)$ a.s., and $\Emp_n\rightsquigarrow \B_F$ in $\ell^\infty(\bar\R^d)$. Therefore, as a direct consequence of Theorem \ref{Theorem.main} and Corollary \ref{Corollary.Skorohod} we obtain the following result.

\begin{proposition}\label{Proposition.Empirical}
Assume that $F\ne G$ and let $\B_F$ be an $F$-Brownian bridge. For $\phi\in\{\delta, \sigma, \alpha \}$, we consider the statistics $D_n(\phi)$ defined in (\ref{Dn}). We have that $D_n(\phi)\cd \phi_{F-G}^\prime(\B_F)$, where the derivatives $\phi_{F-G}^\prime$ are given as in (\ref{derivatives-Skorohod}).
\end{proposition}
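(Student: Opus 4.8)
The plan is to recognize the three statistics in (\ref{Dn}) as instances of the abstract scheme (\ref{problem}) and then to invoke the machinery already assembled. First I would set $\X=\bar\R^d$, $q=F-G$, $\mathbb{Q}_n=F_n-G$ and $r_n=\sqrt n$, so that $D_n(\phi)$ coincides with $r_n(\phi(\mathbb{Q}_n)-\phi(q))$ and, crucially, the driving process $r_n(\mathbb{Q}_n-q)=\sqrt n\,(F_n-F)=\Emp_n$ is exactly the empirical process in (\ref{empirical process}). The fixed term $G$ cancels in the difference, which is the reason the one-sample problem reduces to a statement purely about $\Emp_n$.

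Second, I would record the weak-convergence input: since the collection of indicator functions of lower (hyper)rectangles of $\bar\R^d$ is Donsker, $\Emp_n\rightsquigarrow \B_F$ in $\ell^\infty(\bar\R^d)$, with $\B_F$ a tight, Borel measurable, centered Gaussian element. This is precisely hypothesis (\ref{weak}) of Theorem \ref{Theorem.main} with $\mathbb{Q}=\B_F$. Because $F\ne G$, the center satisfies $q=F-G\in\D(\bar\R^d)\setminus\{0\}$, so the differentiability results at a nonzero point are applicable.

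Third, I would apply Theorem \ref{Theorem.main} to obtain $D_n(\phi)=r_n(\phi(\mathbb{Q}_n)-\phi(q))\cd \phi_{F-G}^\prime(\B_F)$ for $\phi\in\{\delta,\sigma,\alpha\}$, where $\phi_{F-G}^\prime$ is the general derivative in (\ref{derivatives}), together with the asymptotic linearization $D_n(\phi)=\phi_{F-G}^\prime(\Emp_n)+o_\P(1)$. It then remains to replace the general expression (\ref{derivatives}) by the Skorohod expression (\ref{derivatives-Skorohod}). Since $(\bar\R^d,d_e)$ is compact and $F-G\in\D(\bar\R^d)\setminus\{0\}$, Corollary \ref{Corollary.Skorohod} identifies $\phi_{F-G}^\prime(g)$ with the right-hand sides of (\ref{derivatives-Skorohod}) for every direction $g\in\D(\bar\R^d)$.

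The one genuinely nontrivial point, and the step I would treat with care, is that the limit $\B_F$ must take its values in the tangent set $\D(\bar\R^d)$ almost surely, so that Corollary \ref{Corollary.Skorohod} may legitimately be evaluated at the random direction $g=\B_F$. I would obtain this from path regularity combined with the portmanteau theorem: each $\Emp_n$ has sample paths in $\D(\bar\R^d)$ almost surely, and $\D(\bar\R^d)$ is closed in $(\ell^\infty(\bar\R^d),\Vert\cdot\Vert_\infty)$ because uniform limits preserve continuity from above and the existence of quadrant limits. Applying the portmanteau inequality for weak convergence to the closed set $\D(\bar\R^d)$ gives $1=\limsup_{n}\P^*(\Emp_n\in\D(\bar\R^d))\le \P(\B_F\in\D(\bar\R^d))$, whence $\B_F\in\D(\bar\R^d)$ a.s. Combining this with the previous two steps yields $D_n(\phi)\cd \phi_{F-G}^\prime(\B_F)$ with the derivatives given by (\ref{derivatives-Skorohod}), as claimed.
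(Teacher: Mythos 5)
Your proof is correct and follows essentially the same route as the paper: identify $D_n(\phi)$ with the scheme (\ref{problem}), use the Donsker property of lower rectangles to get $\Emp_n\rightsquigarrow\B_F$ in $\ell^\infty(\bar\R^d)$, and combine Theorem \ref{Theorem.main} with Corollary \ref{Corollary.Skorohod} at the point $F-G\in\D(\bar\R^d)\setminus\{0\}$. Your explicit portmanteau argument that $\B_F$ concentrates on the closed set $\D(\bar\R^d)$, so that the Skorohod form (\ref{derivatives-Skorohod}) of the derivative may be evaluated at the random direction $\B_F$, makes rigorous a point the paper leaves implicit.
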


When $d=1$, Proposition \ref{Proposition.Empirical} improves \citet[Theorems 1, 2 and 3]{Raghavachari} as here $F$ and $G$ are not assumed to be continuous. 
If $F$ is continuous, then $\B_F\in \mathcal{C}(\bar\R^d,d_e)$ a.s., and the limiting distributions in Proposition \ref{Proposition.Empirical} have simpler expressions (see (\ref{derivative-continuous})). The following corollary provides a multidimensional extension of the results in \cite{Raghavachari}.

\begin{corollary}
In the conditions of Proposition \ref{Proposition.Empirical}, let us further assume that $F, G\in\mathcal{C}(\bar\R^d,d_e)$ and we consider the sets $M^+(\cdot)$ and $M^-(\cdot)$  defined in (\ref{M+M-}). We have that:
\begin{itemize}
\item[(i)] $\displaystyle D_n(\delta)   \cd \sup_{M^+(|F-G|)}\left( \B_F \cdot \sgn(F-G)\right)$;

\item[(ii)] $\displaystyle D_n(\sigma)  \cd  \sup_{M^+(F-G)} \B_F$;

\item[(iii)] $\displaystyle D_n(\alpha)  \cd \sup_{M^+(F-G)} \B_F -  \inf_{M^-(F-G)} \B_F$.
\end{itemize}
\end{corollary}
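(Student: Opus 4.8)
The plan is to treat this corollary as a specialization of Proposition \ref{Proposition.Empirical}: that result already delivers $D_n(\phi)\cd \phi^\prime_{F-G}(\B_F)$ for $\phi\in\{\delta,\sigma,\alpha\}$, so the only remaining task is to rewrite the directional derivatives $\phi^\prime_{F-G}(\B_F)$ in the simpler form (\ref{derivative-continuous}) once the extra hypothesis $F,G\in\mathcal{C}(\bar\R^d,d_e)$ is imposed. I would not re-prove any weak convergence; the continuity hypotheses serve only to collapse the quadrant-indexed expressions in (\ref{derivatives-Skorohod}).

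First I would record two consequences of continuity. Since $F$ and $G$ are continuous, $f:=F-G$ lies in $\mathcal{C}(\bar\R^d,d_e)$ (and in $\D(\bar\R^d)$), with $f\ne0$ because $F\ne G$. Second, continuity of $F$ makes the limiting $F$-Brownian bridge $\B_F$ have continuous sample paths almost surely, i.e.\ $\B_F\in\mathcal{C}(\bar\R^d,d_e)$ with probability one---the sample-path continuity of the pinned Brownian sheet attached to a continuous $F$ that is already noted just before the statement. Given these, the reduction is immediate: for continuous $f$ the quadrant extremal sets satisfy $M^+_\mathbf{v}(|f|)=M^+(|f|)$, $M^+_\mathbf{v}(f)=M^+(f)$ and $M^-_\mathbf{v}(f)=M^-(f)$ for every $\mathbf{v}\in\mathcal{V}$ (the observation following Corollary \ref{Corollary.Skorohod}), while on the almost-sure continuity event of $\B_F$ its quadrant limits collapse to $(\B_F)_\mathbf{v}=\B_F$ and $\sgn(f_\mathbf{v})=\sgn(f)$. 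Substituting these identities into (\ref{derivatives-Skorohod}) makes the outer $\max_\mathbf{v}$ and $\min_\mathbf{v}$ redundant, turning the derivatives into exactly the expressions (\ref{derivative-continuous}); equivalently one may invoke the ``$f\in\mathcal{C}(\bar\R^d,d_e)$'' case of Corollary \ref{Corollary.Skorohod-2}. Plugging $f=F-G$ and $g=\B_F$ into (\ref{derivative-continuous}) then yields precisely (i)--(iii).

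There is no genuinely hard analytic step---the heavy lifting resides in Theorem \ref{Theorem.main}, Corollary \ref{Corollary.Skorohod}, and the empirical-process convergence used to establish Proposition \ref{Proposition.Empirical}. The one point I would be careful about is that the simplification (\ref{derivative-continuous}) is valid only on the full-probability event where $\B_F$ is continuous; since the derivative maps are measurable functionals of the realized path and agree with the claimed limits off a null set, the identification of the limiting law is unaffected, and (i)--(iii) follow.
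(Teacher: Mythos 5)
Your proposal is correct and follows exactly the route the paper takes: the corollary is obtained from Proposition \ref{Proposition.Empirical} by observing that continuity of $F$ and $G$ forces $F-G\in\mathcal{C}(\bar\R^d,d_e)$ and $\B_F\in\mathcal{C}(\bar\R^d,d_e)$ a.s., so the quadrant-indexed derivatives in (\ref{derivatives-Skorohod}) collapse to the expressions in (\ref{derivative-continuous}). Your additional remark about the reduction holding only on the full-probability continuity event of $\B_F$ is a careful (and harmless) refinement of the same argument.
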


\begin{remark}
In the setting of the previous corollary, when $M^+(|F-G|)$ (respectively, $M^+(F-G)$, and $M^+(F-G)$ and $M^-(F-G)$) contains only one point, the mapping $\delta$ (respectively, $\sigma$ and $\alpha$) is fully Hadamard differentiable at $F-G$ (see Corollary \ref{Corollary-Full-differentiability}). In particular, the asymptotic distribution of $D_n(\delta)$ (respectively, $D_n(\sigma)$ and $D_n(\alpha)$) is a zero mean Gaussian distribution. The asymptotic variance can be directly computed from the covariances of $\B_F$.
\end{remark}


\begin{remark}\label{Remark.Dette}
In  \citet[Theorem 6.1]{Dette-Kokot-Aue}, the authors obtained a similar version of the results in \cite{Raghavachari} for convergence of suprema of non-centered processes indexed by directed sets. Using the results in Section \ref{Subsection.Compact.Spaces} we can state the following slightly more general result: Let $(T,d)$ be a compact metric space and $\mu \in \mathcal{C}(T,d)\setminus\{0\}$. Let $\{X_a : a\in A\}$ be a net of random variables taking values in $\ell^\infty(T)$ and $r:A\longrightarrow [0,\infty)$ satisfying that $\lim_{a} r_a = \infty$ (with $r_a=r(a)$). Assume that $Z_a:=r_a(X_a-\mu)\cd Z$ in $\ell^\infty(T)$, where $Z$ is a Gaussian random variable with paths in $\mathcal{C}(T,d)$ a.s., then
$$D_a(\delta)=r_a ( \Vert X_a\Vert_\infty-\Vert \mu\Vert_\infty)\cd \delta^\prime_{\mu}(Z)=\sup_{M^+(|\mu|)} \sgn(\mu) Z.$$
It is worth noting that we can drop the assumption on the normalizing sequence $r_a$ in \citet[Theorem 6.1]{Dette-Kokot-Aue}. A similar result can be provided when $(T,d)$ is a totally bounded metric space by using the results in Section \ref{Subsection.Totally.bounded} (see Corollary \ref{Corollary.Totally.Compact} (b)).
\end{remark}

\begin{remark}
The results in the paper can be used to make inferences on the quantity $\delta(F-F_0)=\Vert F-F_0 \Vert_\infty$, where $F_0$ is a fixed and known distribution function. It should be taken into account that in general the corresponding limiting distribution, $\delta^\prime_{F-F_0}(\B_F)$, cannot be approximated by a standard bootstrap approach. It is known that the standard bootstrap fails when the mapping is not fully Hadamard differentiable and the limit of the underlying process is Gaussian; see \citet[Theorem 3.1]{Fang-Santos}. Observe that the map $\delta$ is fully differentiable at $F-F_0$ if and only if $F-F_0$ is a peaking function (see Corollary \ref{Corollary-Full-differentiability}). Therefore, an alternative approach has to be used if we do not assume this ``peaking condition" on $F-F_0$. In \citet[Theorem 3.2]{Fang-Santos} a method to consistently estimate this type of asymptotic distributions is proposed. The key idea is estimating in a suitable way the directional derivative. A detailed study of these topics is beyond the scope and space limitations of the present paper.
\end{remark}

\medskip

\noindent \textbf{\textsl{Two-sample case:}} Here, two (mutually independent) random samples are available, one of size $n$ from $F$ and another one of size $m$ from $G$. Let ${F}_n$ and ${G}_m$ be the empirical distribution functions of the two samples, respectively, and set $N\equiv\frac{nm}{n+m}$. The two-sided, and one-sided Kolmogorov-Smirnov and Kuiper statistics in the two sample case are given by
\begin{equation}\label{Dnm}
\begin{split}
D_{n,m}(\delta)&:=\sqrt{N}(\Vert{F}_n-{G}_m\Vert_\infty-\Vert F-G\Vert_\infty),\\
D_{n,m}(\sigma)&:=\sqrt{N}\left(\sup({F}_n-{G}_m)-\sup(F-G)\right)\\
D_{n,m}(\alpha)&:=\sqrt{N}(\amp({F}_n-{G}_m)-\amp (F-G)).
\end{split}
\end{equation}

In the general setting specified in (\ref{problem}), this situation corresponds to the case $q=F-G$, $\mathbb{Q}_{n,m}=F_n-G_m$ and $r_{n,m}=\sqrt{N}$. Hence, we have that
\begin{equation*}
r_{n,m}(\mathbb{Q}_{n,m}-q)=\sqrt{\frac{m}{n+m}}\,\Emp_{n,F}-\sqrt{\frac{n}{n+m}}\,\tilde\Emp_{m,G}
\end{equation*}
with $\Emp_{n,F}$ and $\tilde\Emp_{m,G}$ independent empirical processes. We further observe that if the sampling scheme is balanced, that is, $n/(n + m)\to\lambda$, with $0 < \lambda < 1$ as $n,m\to\infty$, then
$r_{n,m}(\mathbb{Q}_{n,m}-q)\rightsquigarrow \sqrt{1-\lambda}\,\B_{F}-\sqrt{\lambda}\,\tilde\B_{G} $ in $\ell^\infty(\bar\R^d)$, where $\B_{F}$ and $\tilde\B_{G}$ are two independent Brownian bridges associated with $F$ and $G$, respectively. Hence, Theorem \ref{Theorem.main} and Corollary \ref{Corollary.Skorohod-2} directly imply the following result which improves and generalizes \citet[Theorems 4 and 5]{Raghavachari}.

\begin{proposition}
Let us consider a sampling scheme such that as $n$, $m\to\infty$, $n/(n + m)\to\lambda$, with $0 < \lambda < 1$ and let  $\B_{F}$ and $\tilde\B_{G}$ be two independent Brownian bridges associated with $F$ and $G$, respectively.  For $\phi\in\{\delta, \sigma, \alpha \}$, we consider the statistics $D_{n,m}(\phi)$ defined in (\ref{Dnm}).  We have that $D_{n,m}(\phi)\cd \phi_{F-G}^\prime(\sqrt{1-\lambda}\,\B_{F}-\sqrt{\lambda}\,\tilde\B_{G})$, where the derivatives $\phi_{F-G}^\prime$ are given in (\ref{derivatives-Skorohod}).
If we further have that $F, G\in\mathcal{C}(\bar\R^d, d_e)$, then the derivatives can be expressed as in (\ref{derivative-continuous}).
\end{proposition}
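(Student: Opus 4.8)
The plan is to recognize $D_{n,m}(\phi)$ as an instance of the generic statistic in \eqref{problem} and then to invoke Theorem \ref{Theorem.main} together with the Skorohod-space derivative formulas of Corollary \ref{Corollary.Skorohod}. Concretely, I would set $q = F - G$, $\mathbb{Q}_{n,m} = F_n - G_m$ and $r_{n,m} = \sqrt{N}$. Since $F \ne G$ and every difference of distribution functions lies in $\D(\bar\R^d)$, we have $q \in \D(\bar\R^d) \setminus \{0\}$, so the differentiability hypotheses are in force. The entire argument then reduces to verifying the single weak-convergence hypothesis of Theorem \ref{Theorem.main}, namely that $r_{n,m}(\mathbb{Q}_{n,m} - q) \rightsquigarrow \sqrt{1-\lambda}\,\B_F - \sqrt{\lambda}\,\tilde\B_G$ in $\ell^\infty(\bar\R^d)$.

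To establish this convergence I would start from the exact identity
\begin{equation*}
r_{n,m}(\mathbb{Q}_{n,m} - q) = \sqrt{\tfrac{m}{n+m}}\,\Emp_{n,F} - \sqrt{\tfrac{n}{n+m}}\,\tilde\Emp_{m,G}
\end{equation*}
recorded just before the statement. By the multivariate Donsker theorem for the class of indicators of lower rectangles, $\Emp_{n,F} \rightsquigarrow \B_F$ and $\tilde\Emp_{m,G} \rightsquigarrow \tilde\B_G$ in $\ell^\infty(\bar\R^d)$; because the two samples are mutually independent, the joint law of $(\Emp_{n,F}, \tilde\Emp_{m,G})$ converges to that of $(\B_F, \tilde\B_G)$, with the two bridges independent. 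Under the balanced scheme the deterministic coefficients satisfy $\sqrt{m/(n+m)} \to \sqrt{1-\lambda}$ and $\sqrt{n/(n+m)} \to \sqrt{\lambda}$, so a Slutsky-type argument, equivalently the extended continuous mapping theorem applied to the jointly continuous linear-combination map with convergent coefficients, yields the desired limit $\mathbb{Q} := \sqrt{1-\lambda}\,\B_F - \sqrt{\lambda}\,\tilde\B_G$.

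With the convergence in hand I would then check that $\mathbb{Q}$ is an admissible limit for the Delta method: being a linear combination of Brownian bridges, its sample paths lie in $\D(\bar\R^d)$ a.s., so the tangency tangentially to $\D(\bar\R^d)$ required by Corollary \ref{Corollary.Skorohod} is satisfied. Theorem \ref{Theorem.main} then gives $D_{n,m}(\phi) \cd \phi'_{F-G}(\mathbb{Q})$ for $\phi \in \{\delta, \sigma, \alpha\}$, with the derivatives read off from \eqref{derivatives-Skorohod}. For the final assertion, when $F, G \in \mathcal{C}(\bar\R^d, d_e)$ both bridges have continuous paths a.s., hence $\mathbb{Q} \in \mathcal{C}(\bar\R^d, d_e)$ a.s.; replacing Corollary \ref{Corollary.Skorohod} by Corollary \ref{Corollary.Skorohod-2} collapses the quadrant maxima and delivers the simplified expressions \eqref{derivative-continuous}.

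The one point that genuinely requires care is the weak-convergence step, specifically pulling the $(n,m)$-dependent but convergent coefficients through the Hoffmann-J{\o}rgensen weak limit in the non-separable space $\ell^\infty(\bar\R^d)$. The hard part will be to secure the joint convergence of the two independent empirical processes and then to justify the Slutsky replacement of the coefficients by their limits uniformly over $\bar\R^d$, treating the double index $(n,m)$ with $n/(n+m)\to\lambda$ as a net. Once this is done, everything else is a bookkeeping application of the already-proved differentiability results, which is precisely why the overall argument is so short.
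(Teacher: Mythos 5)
Your proposal is correct and follows essentially the same route as the paper: identify $q=F-G$, $\mathbb{Q}_{n,m}=F_n-G_m$, $r_{n,m}=\sqrt{N}$, establish the weak convergence $r_{n,m}(\mathbb{Q}_{n,m}-q)\rightsquigarrow \sqrt{1-\lambda}\,\B_F-\sqrt{\lambda}\,\tilde\B_G$ from the joint convergence of the two independent empirical processes under the balanced scheme, and then apply Theorem \ref{Theorem.main} with Corollaries \ref{Corollary.Skorohod} and \ref{Corollary.Skorohod-2}. The extra care you devote to the Slutsky step for the convergent coefficients is a fair filling-in of detail the paper leaves implicit, but it is not a different argument.
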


\section{Copulas}\label{Section-Copula}

In this section, for simplicity, we will assume that the involved distribution functions are continuous. Let us assume that the $d$-dimensional distribution function $F$ has copula $C$ and continuous marginal distribution functions $F_1,\dots,F_d$. In other words, $F(\mathbf{x})=C(F_1(x_1),\dots,F_d(x_d))$, for $\mathbf{x}=(x_1,\dots,x_d)\in\R^d$. Let ${F}_n$ and ${F}_{n,i}$ ($i=1,\dots,d$) be the empirical joint and $i$-th marginal distribution functions of a random sample of size $n$ from $F$.
The \textit{empirical copula} is
\begin{equation}\label{empirical copula}
{C}_n(\mathbf{u}):={F}_n( {F}_{n,1}^{-1}(u_1),\dots, {F}_{n,d}^{-1}(u_d) ),\quad \mathbf{u}:=(u_1,\dots,u_d)\in [0,1]^d,
\end{equation}
where $ {F}_{n,i}^{-1}$ stands for the generalized inverse of ${F}_{n,i}$, i.e., the marginal quantile function of the $i$-th coordinate sample. The \textit{empirical copula process} is defined by
\begin{equation}\label{empirical copula process}
\mathbb{C}_n(\mathbf{u}):=\sqrt{n}({C}_n(\mathbf{u})-C(\mathbf{u})),\quad n\in \mathbb{N},\quad \mathbf{u}\in [0,1]^d.
\end{equation}
Empirical copula processes play the same role for copulas as empirical processes for distribution functions and they have been extensively used in goodness-of-fit testing problems for copulas (see \cite{Fermanian} for an overview about this subject).

Several works have been devoted to discuss the asymptotic behaviour of $\mathbb{C}_n$ in (\ref{empirical copula process}). For instance, in \cite{Segers} (see also the references therein) it is shown that, under certain not very restrictive smoothness assumptions on the underlying copula $C$, $\mathbb{C}_n$ converges weakly in $\ell^\infty([0,1]^d)$. Specifically, let us assume that $C$ satisfies the following regularity condition:

\textbf{Condition 1.} For each $i\in\{1,\dots,d\}$, the $i$-th first order partial derivative of $C$, $\partial_i C$, exists and is continuous on the set $\{ \mathbf{u}=(u_1,\dots.u_d)\in[0,1]^d: 0<u_i<1\}$.

If Condition 1 is satisfied, $\mathbb{C}_n \rightsquigarrow \mathbb{C}$ in $\ell^\infty([0,1]^d)$ (see \citet[Proposition 3.1]{Segers}), where $\mathbb{C}$ is a Gaussian process that can be represented as
\begin{equation}\label{limit copula}
\mathbb{C}(\mathbf{u})=\B_C(\mathbf{u})-\sum_{i=1}^d \partial_i {C}(\mathbf{u}) \B_C^{(i)}(u_i),\quad  \mathbf{u}=(u_1,\dots,u_d)\in [0,1]^d,
\end{equation}
with $\B_C$ a $C$-Brownian bridge (see Section \ref{Section.Distribution.Functions}) and $\B_C^{(i)}(u_i):=\B_C(1,\dots,1,u_i,1,\dots,1)$, the variable $u_i$ appearing at the $i$-th entry.

Using Theorem \ref{Theorem.main} and Corollary \ref{Corollary.Skorohod-2}, we immediately obtain the following result. Though details are omitted, similar results can be stated for the unilateral Kolmogorov-Smirnov and Kuiper statistics and the associated two sample problems. Therefore, we obtain analogous outcomes to those of \cite{Raghavachari} for copulas instead of distribution functions.

\begin{proposition}\label{Proposition Copulas}
Let $C$ be a copula satisfying Condition 1 and let $C_n$ be as in (\ref{empirical copula}). For any copula $D\ne C$, the statistic
\begin{equation*}
T_n(C,D):=\sqrt{n}(\Vert C_n- D\Vert_\infty-\Vert C-D\Vert_\infty)
\end{equation*}
converges in distribution to $\delta^\prime_{C-D}(\mathbb{C})=\sup_{M^+(|C-D|)}\left( \mathbb{C} \cdot \sgn(C-D)\right)$, with $\mathbb{C}$ defined in (\ref{limit copula}) and the set $M^+(\cdot)$ is given in (\ref{M+M-}).
\end{proposition}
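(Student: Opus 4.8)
The plan is to recognize $T_n(C,D)$ as an instance of the generic statistic $D_n(\delta)$ in (\ref{problem}) over the compact metric space $\X=[0,1]^d$ and then invoke the extended Delta method packaged in Theorem \ref{Theorem.main}. Concretely, I would set $q:=C-D$, $\mathbb{Q}_n:=C_n-D$ and $r_n:=\sqrt{n}$, so that
$$r_n(\mathbb{Q}_n-q)=\sqrt{n}\big((C_n-D)-(C-D)\big)=\sqrt{n}(C_n-C)=\mathbb{C}_n,$$
the empirical copula process in (\ref{empirical copula process}), and $T_n(C,D)=\sqrt{n}(\delta(\mathbb{Q}_n)-\delta(q))$ has exactly the form (\ref{problem}) with $\phi=\delta$.

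First I would verify the two hypotheses of Theorem \ref{Theorem.main}. The requirement $q\ne 0$ holds because two distinct copulas differ as functions, so $C-D\not\equiv 0$. The weak-convergence requirement $r_n(\mathbb{Q}_n-q)\rightsquigarrow\mathbb{Q}$ is precisely $\mathbb{C}_n\rightsquigarrow\mathbb{C}$ in $\ell^\infty([0,1]^d)$, which is available under Condition 1 by \citet[Proposition 3.1]{Segers}, with $\mathbb{C}$ the Gaussian limit displayed in (\ref{limit copula}). Theorem \ref{Theorem.main} then delivers $T_n(C,D)\cd\delta^\prime_{C-D}(\mathbb{C})$, where $\delta^\prime_{C-D}$ is the Hadamard directional derivative of the sup-norm functional at $C-D$.

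The remaining step is to trade the general derivative of Theorem \ref{Theorem.differentiability} (a limit over shrinking superlevel sets) for the closed form claimed in the statement. I would use two continuity facts. On one hand, $C-D\in\mathcal{C}([0,1]^d,d_e)\setminus\{0\}$, since both copulas are continuous under the standing assumption of this section. On the other hand, the tangent direction $\mathbb{C}$ has continuous sample paths almost surely, as follows from its representation (\ref{limit copula}) together with the regularity guaranteed by Condition 1. Hence the Delta method is being applied tangentially to $\mathcal{C}([0,1]^d,d_e)$, and the simplified expression in Corollary \ref{Corollary.Compact.2} (equation (\ref{derivative-continuous})) applies, giving
$$\delta^\prime_{C-D}(\mathbb{C})=\sup_{M^+(|C-D|)}\big(\mathbb{C}\cdot\sgn(C-D)\big),$$
with $M^+(\cdot)$ as in (\ref{M+M-}). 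This is exactly the asserted limit.

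The only point demanding genuine care is the tangency to the continuous functions: one must confirm that the tight limit $\mathbb{C}$ concentrates on $\mathcal{C}([0,1]^d,d_e)$, since this is what licenses passing from the general derivative of Theorem \ref{Theorem.differentiability} to the sharper one of Corollary \ref{Corollary.Compact.2}. Once the almost-sure continuity of the paths of $\mathbb{C}$ is in hand, every other ingredient is a direct specialization of the machinery already assembled in Section \ref{Section.Main}, so no new estimates are required.
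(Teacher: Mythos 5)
Your proposal is correct and follows essentially the same route as the paper, which obtains the result directly from Theorem \ref{Theorem.main} together with the weak convergence $\mathbb{C}_n\rightsquigarrow\mathbb{C}$ from \citet[Proposition 3.1]{Segers} and the simplified derivative formula (\ref{derivative-continuous}). The only cosmetic difference is that you invoke Corollary \ref{Corollary.Compact.2} on the compact space $[0,1]^d$ while the paper points to Corollary \ref{Corollary.Skorohod-2}; both lead to the same expression since $C-D$ is continuous and $\mathbb{C}$ has continuous sample paths a.s., a point you rightly flag as the one step requiring care.
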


For any bivariate copula $C$, we consider the survival copula $\bar C$ defined by
\begin{equation*}
\bar C(u,v):=u+v-1+C(1-u,1-v),\quad  (u,v)\in[0,1]^2.
\end{equation*}
The statistic
\begin{equation} \label{statistics-symmetry}
\bar T_n(C):=\sqrt{n}(\Vert C_n- \bar C_n\Vert_\infty-\Vert C-\bar C\Vert_\infty),
\end{equation}
where $C_n$ is given in (\ref{empirical copula}), has been used in \cite{Genest-Neslehova} to derive a test of radial symmetry for bivariate copulas. The next proposition provides the asymptotic distribution of such statistic.
\begin{proposition}\label{Proposition Copulas-2}
Let $C$ be a bivariate copula satisfying Condition 1 (for $d=2$). The statistic $\bar T_n(C)$ in \eqref{statistics-symmetry}
converges in distribution to
\begin{equation*}
\delta^\prime_{C-\bar C}(\mathbb{C}^*)=\sup_{M^+(|C-\bar C|)}\left( \mathbb{C}^* \cdot \sgn(C-\bar C)\right),
\end{equation*}
where $\mathbb{C}^*(u,v):=\mathbb{C}(u,v)-\mathbb{C}(1-u,1-v)$, $(u,v)\in[0,1]^2$, and $\mathbb{C}$ and the set $M^+(\cdot)$ are defined in (\ref{limit copula}) and (\ref{M+M-}), respectively.
\end{proposition}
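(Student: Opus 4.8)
The plan is to recognize $\bar T_n(C)$ as a $\delta$-functional of the form (\ref{problem}) and to apply Theorem \ref{Theorem.main}, the only real work being the identification of the weak limit of the associated normalized process. Writing $\mathbb{Q}_n := C_n - \bar C_n$, $q := C - \bar C$, $\phi := \delta$ and $r_n := \sqrt{n}$ (with $\bar C_n(u,v) := u + v - 1 + C_n(1-u,1-v)$ the empirical survival copula), we have $\bar T_n(C) = r_n(\delta(\mathbb{Q}_n) - \delta(q))$. Since $C$ is continuous, $\bar C(u,v) = u + v - 1 + C(1-u,1-v)$ is continuous too, so $q = C - \bar C \in \mathcal{C}([0,1]^2, d_e)$; as in the radial-symmetry testing problem under the alternative, we work under $C \neq \bar C$, so that $q \neq 0$ and $\delta$ is directionally differentiable at $q$.

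First I would carry out the reflection computation. From the definitions, $\bar C_n(u,v) - \bar C(u,v) = C_n(1-u,1-v) - C(1-u,1-v)$, whence
\begin{equation*}
r_n(\mathbb{Q}_n - q)(u,v) = \mathbb{C}_n(u,v) - \mathbb{C}_n(1-u,1-v), \qquad (u,v)\in[0,1]^2,
\end{equation*}
with $\mathbb{C}_n$ the empirical copula process in (\ref{empirical copula process}). The key point is to write this as a single continuous transformation of $\mathbb{C}_n$: the operator $\Psi:\ell^\infty([0,1]^2)\longrightarrow\ell^\infty([0,1]^2)$ given by $\Psi(h)(u,v) := h(u,v) - h(1-u,1-v)$ is linear and bounded, since $\Vert\Psi(h)\Vert_\infty \le 2\Vert h\Vert_\infty$, hence continuous, and $r_n(\mathbb{Q}_n - q) = \Psi(\mathbb{C}_n)$.

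Next I would invoke Condition 1 together with \citet[Proposition 3.1]{Segers} to get $\mathbb{C}_n \rightsquigarrow \mathbb{C}$ in $\ell^\infty([0,1]^2)$, with $\mathbb{C}$ as in (\ref{limit copula}). The continuous mapping theorem applied to $\Psi$ then yields $r_n(\mathbb{Q}_n - q) = \Psi(\mathbb{C}_n) \rightsquigarrow \Psi(\mathbb{C}) = \mathbb{C}^*$, where $\mathbb{C}^*(u,v) = \mathbb{C}(u,v) - \mathbb{C}(1-u,1-v)$ is precisely the limit process in the statement. With this weak convergence established, Theorem \ref{Theorem.main} gives $\bar T_n(C) \rightsquigarrow \delta'_{C-\bar C}(\mathbb{C}^*)$. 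Finally, because $q = C - \bar C$ is continuous and $\mathbb{C}^*$ has continuous paths almost surely, the continuous-function case of Corollary \ref{Corollary.Skorohod-2} (equivalently (\ref{derivative-continuous})) identifies the derivative as $\delta'_{C-\bar C}(\mathbb{C}^*) = \sup_{M^+(|C-\bar C|)}(\mathbb{C}^* \cdot \sgn(C-\bar C))$ with $M^+(\cdot)$ as in (\ref{M+M-}), which is the claimed limit.

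The computations are essentially routine, and I do not expect a serious obstacle; the one point deserving care is that the limit must be obtained by passing the single process $\mathbb{C}_n$ through the fixed continuous reflection operator $\Psi$, rather than by trying to combine two a priori separate weak limits at the arguments $(u,v)$ and $(1-u,1-v)$. Expressing $r_n(\mathbb{Q}_n - q)$ as $\Psi(\mathbb{C}_n)$ sidesteps any such joint-convergence issue and lets the continuous mapping theorem do the work, while continuity (and non-vanishing) of $C - \bar C$ ensures both the applicability of Segers' weak convergence via Condition 1 and the simplified form (\ref{derivative-continuous}) of the directional derivative.
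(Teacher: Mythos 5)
Your proposal is correct and follows essentially the same route as the paper: reduce to the weak convergence of $\sqrt{n}\bigl(C_n-\bar C_n-(C-\bar C)\bigr)$, identify it as $\mathbb{C}_n(u,v)-\mathbb{C}_n(1-u,1-v)$, and conclude via Condition 1, \citet[Proposition 3.1]{Segers}, the continuous mapping theorem, and Theorem \ref{Theorem.main}. Your only addition is to make the reflection operator $\Psi$ explicit as a bounded linear map, which is a harmless elaboration of the paper's one-line appeal to the continuous mapping theorem.
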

\begin{proof}
From Theorem \ref{Theorem.main}, it will suffices to show that
\begin{equation*}
C_n^*:=\sqrt{n} \left( C_n-\bar C_n -(C-\bar C)     \right) \rightsquigarrow \mathbb{C}^* \quad \text{in }\ell^\infty([0,1]^2).
\end{equation*}
Observe that $C_n^*(u,v)= \mathbb{C}_n(u,v)-\mathbb{C}_n(1-u,1-v)$, $(u,v)\in[0,1]^2$, with $\mathbb{C}_n$ being the empirical copula process defined in \eqref{empirical copula process}. Therefore, from Condition 1 together with \citet[Proposition 3.1]{Segers}, and the continuous mapping theorem, we have that $C_n^* \rightsquigarrow \mathbb{C}^*$ in $\ell^\infty([0,1]^2)$ and the proof is complete.
\end{proof}

\section{On a question by Jager and Wellner related to the Berk-Jones statistic}\label{Setion.Jager.Weller}

Let $F_n$ be the empirical distribution function of a sample of size $n$ from a univariate random variable with continuous distribution function $F$. Suppose that we want to test the null hypothesis $\text{H}_0: F=G$ versus the alternative $\text{H}_1: F\ne G$, where $G$ is a fixed (and usually known) continuous distribution function.  \cite{Berk-Jones} (see also \citet[Chapter 26.7]{DasGupta}) introduced the test statistic
\begin{equation}\label{R}
R(F_n,G):=\sup_{x\in\R} K(F_n(x),G(x)),
\end{equation}
where
\begin{equation*}
K(x,y):=x\log\left(\frac{x}{y}\right)+(1-x)\log\left(\frac{1-x}{1-y}\right),
\end{equation*}
for $x \in [0,1]$ and $y \in (0,1)$. (The values of $K(x,y)$ when $x=0$ and $x=1$ are taken by continuity.)

For each $x\in\R$, $nK(F_n(x),G(x))$ is the log-likelihood ratio statistic for testing $\text{H}_0: F(x)=G(x)$ against $\text{H}_1:F(x)\ne G(x)$. Hence, $R(F_n,G)$ in (\ref{R}) is nothing but the supremum of these pointwise likelihood ratio tests statistics. Additionally, $K(x,y)$ is the Kullback-Leibler divergence between two Bernoulli distributions with means $x$ and $y$. Hence, $K(x,y)\ge 0$ with equality if and only if $x=y$. In particular, $R(F_n,G)=\Vert K(F_n,G) \Vert_\infty$.

\cite{Berk-Jones} computed the asymptotic
distribution of (the normalized version of) $R(F_n,F)$, i.e., the distribution of the statistic under the null hypothesis $F=G$. For a detailed proof, see \citet[Theorem 1.1]{Wellner-Koltchinskii} or \citet[Theorem 3.1]{Jager-Wellner-2007}. It holds that
\begin{equation}\label{BJ null}
n R(F_n,F)-d_n\cd Y_4, \quad \text{as }n\to\infty,
\end{equation}
where $\P(Y_4\le x)=\exp(-4\exp(-x))$ for $x\in\R$, i.e., $Y_4$ has double-exponential extreme value distribution, and
\begin{equation*}
d_n:= \log_2 n -\frac{1}{2} \log_3 n - \frac{1}{2} \log (4\pi),
\end{equation*}
with $\log_2 n:= \log (\log n )$ and $\log_3 n:= \log(\log_2 n)$.

In \citet[Question 2, p. 329]{Jager-Wellner-2004}, it was set out the open problem of finding the asymptotic behaviour of the Berk-Jones statistic under the alternative hypothesis. In other words, assuming that $F\ne G$, the question consists in finding conditions on $F$ and $G$ for which the statistic
\begin{equation}\label{Bn}
B_n:=\sqrt{n} \big( R(F_n,G)- R(F,G) \big),
\end{equation}
converges in distribution and, in such a case, identifying its weak limit, where $R(F_n,G)$ is given in (\ref{R}) and $R(F,G):=\sup_{x\in\R} K(F(x),G(x))$.

Here we give a precise answer for the previous question. First, we note that $B_n$ in (\ref{Bn}) has the general form of (\ref{problem}). In other words,
\begin{equation}\label{representation Bn}
B_n=D_\sigma(q=K(F,G),\mathbb{Q}_n=K(F_n,G),r_n=\sqrt{n}),
\end{equation}
where $\sigma$ is defined in (\ref{funtionals}). 
Therefore, from (\ref{representation Bn}) and Theorem \ref{Theorem.main}, to obtain the asymptotic distribution of $B_n$ in (\ref{Bn}) it is enough to find the weak limit of the process $\mathbb{W}_n$ given by
\begin{equation}\label{Wn process}
\mathbb{W}_n:= \sqrt{n}(K(F_n,G)-K(F,G)).
\end{equation}
This result is stated in the following theorem.

\begin{theorem}\label{Theorem BJ}
Let us assume that the function $\log\left(\frac{F\,(1-G)}{G\,(1-F)}\right)$ is monotone around $\pm \infty$ and
\begin{equation*}
\int_\R \log^2\left(        \frac{F(t)(1-G(t))}{G(t)(1-F(t))}    \right)\,\d F(t)<\infty.
\end{equation*}
The process $\mathbb{W}_n$ defined in (\ref{Wn process}) satisfies that $\mathbb{W}_n\rightsquigarrow \mathbb{W}$ in $\ell^\infty(\bar\R)$, where
\begin{equation}\label{W}
\mathbb{W}:=\B_F \log\frac{F(1-G)}{G(1-F)},
\end{equation}
and $\B_F$ is an $F$-Brownian bridge.
\end{theorem}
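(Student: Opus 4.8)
The plan is to linearize $\mathbb{W}_n$ through a first-order Taylor expansion of $K$ in its first argument and to recognize the leading term as a fixed deterministic multiple of the empirical process. A direct computation gives
\begin{equation*}
\partial_1 K(x,y)=\log\frac{x(1-y)}{y(1-x)},\qquad \partial_1^2 K(x,y)=\frac{1}{x(1-x)},
\end{equation*}
so that $\partial_1 K(F,G)=\psi$, where $\psi:=\log\frac{F(1-G)}{G(1-F)}$ is precisely the multiplier appearing in $\mathbb{W}=\psi\,\B_F$. With $\Emp_n:=\sqrt{n}(F_n-F)$ the empirical process, the mean value theorem yields
\begin{equation}\label{BJ-decomp}
\mathbb{W}_n=\psi\,\Emp_n+R_n,\qquad R_n=\frac{1}{2\sqrt{n}}\,\frac{\Emp_n^2}{\xi_n(1-\xi_n)},
\end{equation}
where $\xi_n(x)$ lies between $F_n(x)$ and $F(x)$. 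Since $F$ is continuous, it is convenient to pass to uniform coordinates $t=F(x)$, so that $\Emp_n$ becomes the uniform empirical process, $\d F$ becomes Lebesgue measure on $[0,1]$, and the integrability hypothesis reads $\int_0^1\psi^2<\infty$.

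First I would settle convergence on a central region $[a,b]\subset\R$ on which $F$ and $G$ are bounded away from $0$ and $1$. There $\psi$ is bounded and continuous and $\partial_1^2 K$ is bounded, so, as $\Emp_n=O_\P(1)$ uniformly, the remainder satisfies $\sup_{[a,b]}|R_n|=O_\P(n^{-1/2})\to0$, while multiplication by the fixed bounded continuous function $\psi$ is continuous on $\ell^\infty([a,b])$; hence $\psi\,\Emp_n\rightsquigarrow\psi\,\B_F$ by the continuous mapping theorem and $\Emp_n\rightsquigarrow\B_F$. Together with the pointwise delta method and the multivariate central limit theorem, this also gives convergence of all finite-dimensional distributions of $\mathbb{W}_n$ to those of $\mathbb{W}$ on the whole of $\bar\R$.

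The decisive step is to show that the two tails are asymptotically negligible, i.e. that for every $\eta>0$ there exist $a,b$ with
\begin{equation*}
\limsup_{n\to\infty}\P^*\Big(\sup_{x\notin[a,b]}|\mathbb{W}_n(x)|>\eta\Big)<\eta
\quad\text{and}\quad
\P\Big(\sup_{x\notin[a,b]}|\mathbb{W}(x)|>\eta\Big)<\eta .
\end{equation*}
Here both hypotheses enter. Since $\psi$ is monotone near $\pm\infty$ and $\int\psi^2\,\d F<\infty$, the tail of the convergent integral dominates $F(1-F)\,\psi^2$, forcing the pointwise decay $\psi(x)^2F(x)(1-F(x))\to0$ as $x\to\pm\infty$; this makes the variance of $\psi\,\B_F$ vanish in the tails, controlling $\sup_{x\notin[a,b]}|\mathbb{W}(x)|$, and, through a maximal inequality for the weighted empirical process $\psi\,\Emp_n$, also controls the linear part of $\mathbb{W}_n$ there. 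For the nonlinear remainder $R_n$ one exploits that $F_n$ is constant ($0$ or $1$) beyond the extreme order statistics, which renders $K(F_n,G)$ explicit and permits a direct bound that avoids $\partial_1^2 K$, whose blow-up is exactly what obstructs a naive global Taylor estimate. Simultaneously taming the unboundedness of $\psi$ in the linear term and the exploding second derivative in $R_n$ is the main obstacle; the rest is routine.

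Finally, combining the finite-dimensional convergence with the negligibility of the tails yields convergence of the marginals together with asymptotic tightness of $\{\mathbb{W}_n\}$, and hence $\mathbb{W}_n\rightsquigarrow\mathbb{W}$ in $\ell^\infty(\bar\R)$ by the standard characterization of weak convergence in this space (\citet[Theorem 1.5.4]{van der Vaart-Wellner}); the same tail estimates show that $\mathbb{W}$ is a tight Borel element of $\ell^\infty(\bar\R)$. Inserted into Theorem \ref{Theorem.main} via the representation \eqref{representation Bn}, this delivers the limiting law of the Berk--Jones statistic $B_n$.
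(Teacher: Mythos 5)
Your decomposition is the same as the paper's: Taylor-expand $K$ in its first argument, identify the linear term $\psi\,\Emp_n$ with $\psi=\log\frac{F(1-G)}{G(1-F)}$, and show the remainder is uniformly negligible; the linear term is then a weighted empirical process whose convergence under the stated monotonicity and square-integrability hypotheses is exactly the Chibisov--O'Reilly-type result the paper cites (\citet[Example 19.12]{van der Vaart}). Up to that point the proposal is sound.

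The gap is in the remainder $R_n=\frac{\sqrt{n}}{2}\,(F_n-F)^2/\bigl(\xi_n(1-\xi_n)\bigr)$, which you yourself flag as ``the main obstacle'' and then do not resolve. Your two devices do not cover the critical region: boundedness of $\partial_1^2K$ works only on a compact $[a,b]$ where $F$ is bounded away from $0$ and $1$, and the ``$F_n$ is constant beyond the extreme order statistics'' argument handles only $x<X_{(1)}$ and $x>X_{(n)}$. In the intermediate tail (between $X_{(1)}$ and $a$, say) the denominator $\xi_n(1-\xi_n)$ can be of order $1/n$, and in fact $\sup_x (F_n-F)^2/(\xi_n(1-\xi_n))$ is \emph{not} $O_\P(1/n)$ --- the standardized empirical process $\sqrt{n}|F_n-F|/\sqrt{F(1-F)}$ has supremum growing like $\sqrt{\log\log n}$ --- so no naive uniform bound closes this. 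The paper's resolution is the observation that $\partial_1^2K(x,y)=1/(x(1-x))$ does not depend on $y$, so the Taylor remainder is \emph{identically} $K(F_n,F)$; hence $\Vert \mathbb{W}_n-\psi\,\Emp_n\Vert_\infty=\sqrt{n}\,R(F_n,F)$, the null Berk--Jones statistic, and the known limit (\ref{BJ null}) with centering $d_n\asymp\log\log n$ gives $\sqrt{n}\,R(F_n,F)=O_\P(\log\log n/\sqrt{n})\to0$. Without this identification (or an equivalent maximal inequality for the standardized empirical process over the intermediate tail), the negligibility of $R_n$ --- which is the substantive content of the theorem --- remains unproved.
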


\begin{proof}
Using Taylor's theorem, we have that
\begin{equation}\label{Taylor}
K(F_n,G)-K(F,G)=(F_n-F) \log\frac{F(1-G)}{G(1-F)}+\frac{1}{2}\frac{(F_n-F)^2}{F_n^*(1-F_n^*)},
\end{equation}
where $F_n^*$ is between $F$ and $F_n$. We set
\begin{equation}\label{Weighted empirical process}
\tilde{\mathbb{W}}_n:= \sqrt{n}(F_n-F)\,  \log\frac{F(1-G)}{G(1-F)}.
\end{equation}
From (\ref{Wn process}) and (\ref{Taylor}), we have that
\begin{equation}\label{Wn-tildeWn}
\Vert \mathbb{W}_n-\tilde{\mathbb{W}}_n\Vert_\infty=\frac{\sqrt{n}}{2}\left\Vert \frac{(F_n-F)^2}{F_n^*(1-F_n^*)}\right\Vert_\infty.
\end{equation}
Now, from (\ref{Wn-tildeWn}) and \citet[equation (2.2)]{Wellner-Koltchinskii} (see also \citet[equation (9)]{Jager-Wellner-2007}, we obtain that
\begin{equation}\label{Wn-tildeWn-2}
\begin{split}
\Vert \mathbb{W}_n-\tilde{\mathbb{W}}_n\Vert_\infty  &  =_{\text{st}}   \sqrt{n} R(F_n,F)\\ 
&=\frac{1}{\sqrt{n}}(nR(F_n,F) - d_n)+ \frac{d_n}{\sqrt{n}},
\end{split}
\end{equation}
where `$=_{\text{st}} $' stands for equality in distribution.
From (\ref{BJ null}) and (\ref{Wn-tildeWn-2}), we conclude that $\Vert \mathbb{W}_n-\tilde{\mathbb{W}}_n\Vert_\infty\cd 0$.
Hence, the processes $\mathbb{W}_n$ and $\tilde{\mathbb{W}}_n$ have the same asymptotic behaviour (see \citet[Theorem 18.10]{van der Vaart}).
Finally, the conclusion follows from \citet[Example 19.12, p. 273]{van der Vaart}.
\end{proof}

\begin{remark}
As it follows from the proof of Theorem \ref{Theorem BJ}, the process $\mathbb{W}_n$ behaves asymptotically as $\tilde{\mathbb{W}}_n$ in (\ref{Weighted empirical process}), which is a \textit{weighted empirical process}. Therefore, necessary
and sufficient conditions for the convergence of the process $\mathbb{W}_n$ defined in (\ref{Wn process}) are given by the Chibisov-O'Reilly theorem (see \citet[p. 462]{Shorack-Wellner}).
\end{remark}

We are now in position to solve the question proposed in \cite{Jager-Wellner-2004}.

\begin{corollary}
In the conditions of Theorem \ref{Theorem BJ}, the statistic $B_n$ in (\ref{Bn}) satisfies that
\begin{equation*}
B_n\cd \sigma^\prime_{K(F,G)}(\mathbb{W})=\sup_{M^+(K(F,G))} \mathbb{W},\quad \text{as }n\to\infty,
\end{equation*}
where $\mathbb{W}$ is given in (\ref{W}) and the set $M^+(\cdot)$ is defined in (\ref{M+M-}).
\end{corollary}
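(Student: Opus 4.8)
The statement is a direct instance of the extended Delta method, once the process-level convergence of Theorem~\ref{Theorem BJ} is in hand. Recall from~\eqref{representation Bn} that $B_n=D_\sigma(q,\mathbb{Q}_n,r_n)$ with $q=K(F,G)$, $\mathbb{Q}_n=K(F_n,G)$ and $r_n=\sqrt{n}$, so that $B_n=r_n(\sigma(\mathbb{Q}_n)-\sigma(q))$ is exactly of the form~\eqref{problem} for the supremum functional $\sigma$ in~\eqref{funtionals}. To apply Theorem~\ref{Theorem.main} I would first verify its hypothesis $q\in\ell^\infty(\bar\R)\setminus\{0\}$: under the standing continuity assumptions on $F$ and $G$ the map $x\mapsto K(F(x),G(x))$ is bounded (so that $R(F,G)$ is finite) and, since $K(x,y)=0$ if and only if $x=y$ while $F\ne G$, it is not identically zero.

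The second step is to feed in the convergence of the centred process, which is precisely the content of Theorem~\ref{Theorem BJ}: under the monotonicity around $\pm\infty$ and the $L^2(\d F)$ integrability of the log-ratio, $\mathbb{W}_n=r_n(\mathbb{Q}_n-q)\rightsquigarrow\mathbb{W}$ in $\ell^\infty(\bar\R)$, with $\mathbb{W}$ given in~\eqref{W}. Theorem~\ref{Theorem.main} with $\phi=\sigma$ then yields at once $B_n\cd\sigma^\prime_{K(F,G)}(\mathbb{W})$, where $\sigma^\prime_{K(F,G)}$ is the a priori non-linear derivative of~\eqref{derivatives}, namely $\sigma^\prime_{K(F,G)}(g)=\lim_{\epsilon\downarrow 0}\sup_{A_\epsilon(K(F,G))}g$. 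It then remains only to collapse this abstract derivative into the closed form $\sup_{M^+(K(F,G))}\mathbb{W}$. Since $(\bar\R,d_e)$ is a compact metric space and $K(F,G)$ is continuous on $\bar\R$ (a composition of the continuous $K$ with the continuous $F$ and $G$), the identification $A_0(K(F,G))=M^+(K(F,G))$ recorded before Corollary~\ref{Corollary.Compact.2} applies, and Corollary~\ref{Corollary.Skorohod-2} (equivalently Corollary~\ref{Corollary.Compact.2}) gives $\sigma^\prime_{K(F,G)}(g)=\sup_{M^+(K(F,G))}g$ for every continuous tangent direction $g$; evaluating at $g=\mathbb{W}$ delivers the claim.

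The delicate point, and the one I would treat most carefully, is justifying that this continuous-direction simplification is legitimate for $\mathbb{W}$, i.e.\ that the sample paths of $\mathbb{W}$ lie in the tangency set $\mathcal{C}(\bar\R,d_e)$ almost surely. Because $F$ is continuous, $\B_F$ has continuous paths a.s.; the subtlety is the multiplier $\log\frac{F(1-G)}{G(1-F)}$, which may diverge near $\pm\infty$ (and near any boundary of the supports) while $\B_F$ simultaneously vanishes there. The monotonicity and integrability hypotheses of Theorem~\ref{Theorem BJ} are precisely what control this product and render $\mathbb{W}$ a tight, path-continuous element of $\ell^\infty(\bar\R)$, so I would verify continuity of $\mathbb{W}$ at the two infinite endpoints separately. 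Should continuity fail at some point, the fallback is to retain the $\D(\bar\R)$ representation of Corollary~\ref{Corollary.Skorohod} and check that the relevant quadrant limits of $\mathbb{W}$ coincide with its values on $M^+(K(F,G))$, which again reduces to the same endpoint analysis.
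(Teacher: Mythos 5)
Your proposal is correct and follows exactly the route the paper intends: represent $B_n$ as in \eqref{representation Bn}, invoke Theorem \ref{Theorem BJ} for the weak convergence $\mathbb{W}_n\rightsquigarrow\mathbb{W}$ in $\ell^\infty(\bar\R)$, apply Theorem \ref{Theorem.main} with $\phi=\sigma$, and collapse the derivative to $\sup_{M^+(K(F,G))}\mathbb{W}$ via the continuity of $K(F,G)$ and of the paths of $\mathbb{W}$ (Corollary \ref{Corollary.Compact.2}). Your additional care about the behaviour of the multiplier $\log\frac{F(1-G)}{G(1-F)}$ at $\pm\infty$ is a sensible check but does not change the argument, since the hypotheses of Theorem \ref{Theorem BJ} already guarantee that $\mathbb{W}$ is a tight, path-continuous element of $\ell^\infty(\bar\R)$.
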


\begin{remark}
Similar results can be stated for the family of test statistics $S_n(s)$ based on $\phi$-divergences introduced by \cite{Jager-Wellner-2007}. Details are omitted.
\end{remark}

\section{Maximum mean discrepancies}\label{Setion.Maximum.mean.discrepancies}

\subsection{Definition and examples}\label{Subsection.MMD.definition}

Let $X$ and $Y$ be two random variables taking values on a topological space $(\mathcal{X},\tau)$ with Borel probability measures $\P$ and $\Q$, respectively. Throughout this section we will use  the notation
$\E_\P (f)$ to detone the mathematical expectation of $f$ with respect to the probability measure $\P$.
We consider a statistic to measure the dissimilarity between $\P$ and $\Q$ (see \cite{Fortet-Mourier} and \cite{Muller}).

\begin{definition}
Let us consider a class $\X$ of measurable functions $f:\mathcal{X}\longrightarrow\R$. The {\rm maximum mean discrepancy} (MMD in short) between $\P$ and $\Q$ with respect to the class $\X$ is defined by
\begin{equation}\label{MMD}
\text{\rm MMD}[\X,\P, \Q]:= \sup_{f\in\X} \left( \E_\P (f)-\E_\Q (f)       \right).
\end{equation}
\end{definition}

To avoid indeterminate forms in the difference between expectations in (\ref{MMD}), it is usually assumed that $\X$ is a subset of $\mathcal{C}(\mathcal{X},\tau)$, the class of bounded and continuous real functions on $\mathcal{X}$. The probability distribution of the variables is usually completely identified with the MMD with respect to $\mathcal{C}(\mathcal{X},\tau)$. In fact,
if $(\mathcal{X},d)$ is a metric space, then $\P=\Q$ if and only if $\E_\P (f)=\E_\Q (f)$, for all $f\in\mathcal{C}(\mathcal{X},d)$ (see \citet[Lemma 9.3.2]{Dudley}).
However, the class $\mathcal{C}(\mathcal{X},d)$ is in general too large to deal with, so that suitable subsets are usually employed in practice. Another possibility is assuming that the functions $f\in \mathcal{F}$ satisfy that $\sup_{x\in\mathcal{X}}|f(x)|/b(x)<\infty$, for a measurable function $b: \mathcal{X}\longrightarrow [1,\infty)$ such that $\E_\P(b)<\infty$ and $\E_\Q(b)<\infty$. For simplicity, in the following we will not mention these necessary integrability requirements and we will assume that $\sup_{f\in \mathcal{F}}\E_\P(f), \sup_{f\in \mathcal{F}}\E_\Q(f)<\infty$.

We observe that when $\X$ is symmetric, that is, $-f\in\X$ whenever $f\in\X$, we have that $\text{MMD}[\X,\P, \Q]=\sup_{f\in\X} \left| \E_\P (f)-\E_\Q (f)\right|$. In other words, the MMD in (\ref{MMD}) is the \textit{integral probability metric} generated by $\X$ (see \cite{Muller}). In \citet[Section 4.4]{Rachev}, it is also said that the metric has a $\zeta$-structure; see \cite{Zolotarev}. In this section we will also assume that  $\X$ is symmetric.

Some frequently used probability metrics can be expressed as $\text{MMD}[\X,\P, \Q]$, for a suitable choice of the set of functions $\X$. In the following examples $X$ and $Y$ are two random variables with distribution functions $F$ and $G$ and associated probability measures $\P$ and $\Q$, respectively. 

\begin{enumerate}
\item[1.] \textsl{Kolmogorov metric.} This distance is $\Vert F-G\Vert_\infty$, which is the integral probability metric generated by $\X=\{ 1_{(-\infty, x]} : x\in\R \}$. Further, it is also generated by the set of all functions of bounded variation $1$ (see \citet[Theorem 5.2]{Muller}).

\item[2.] \textsl{$L^p$ metrics.} For $1\le p < \infty$, this metric is defined by $d_p(F,G):=\Vert  F-G\Vert_p$  ($\Vert \cdot\Vert_p$ being the usual $L^p$-norm). When $X$ and $Y$ are integrable, $d_p$ admits the dual representation (see \citet[p. 73]{Rachev}) $d_p(F,G)=\text{MMD}[\X_p,\P, \Q]$, where $\mathcal{F}_p$ is the class of all Lebesgue a.e. differentiable functions $f$ such that the derivative $f'$ satisfies $\Vert f' \Vert_q\le 1$ ($q$ being the conjugate of $p$, i.e., $q$ is such that $1/p+1/q=1$).

\item[3.] \textsl{Wasserstein metric.}  This distance is a particular and important case of the $L^p$-metric with $p=1$. Its generator is also the class $\X_{\text{W}}\equiv$ the set of functions $f:\R\longrightarrow\R$ satisfying the Lipschitz condition $|f(x)-f(y)|\le |x-y|$, for all $(x,y)\in\R^2$. By the Kantorovich-Rubinstein theorem, $\Vert F-G\Vert_1=\text{MMD}[\X_{\text{W}},\P, \Q]$. 
    In the context of image processing, this metric is called the \textit{earth mover's distance} (see \cite{Rubner-Tomasi-Guibas}). The importance of the Wasserstein metric, as well as its relevance for optimal transport problems, has been summarized in \citet[Section 6]{Villani}.

\item[4.]  \textsl{Bounded Lipschitz metric.} This metric (see \citet[p. 29]{Huber}) is the integral probability metric generated by $\mathcal{F}_{\text{BL}}:=\{ f : \Vert f \Vert_{\text{BL}} \le 1 \}$, where $\Vert f \Vert_{\text{BL}}:=\Vert f\Vert_{\text{L}}+\Vert f\Vert_\infty$ and $\Vert \cdot\Vert_{\text{L}}$ is the Lipschitz norm given by
$$\Vert f\Vert_{\text{L}}:=\sup_{x\ne y \in \R}\frac{|f(x)-f(y)|}{|x-y|}.$$

\item[5.]  \textsl{Zolotarev ideal metrics of order $r$.} For $r\in \mathbb{N}$, let $\mathcal{Z}_r$ be the class of $(r-1)$-times continuously differentiable functions $f:\R\longrightarrow\R$ satisfying the Lipschitz condition $|f^{(r-1)}(x)-f^{(r-1)}(y)|\le |x-y|$, for all $(x,y)\in\R^2$. (Here we use the notation $f^{(0)}\equiv f$.) The class $\mathcal{Z}_r$ can also be substituted by the set
of functions $f$ having $r$-th derivative $f^{(r)}$ a.e. and such that $|f^{(r)}|\le 1$ a.e. The metric $\zeta_r\equiv\text{MMD}[\mathcal{Z}_r,\P, \Q]$ is called the \textit{Zolotarev metric of order $r$} (see \cite{Rachev} for a general reference and properties of these distances). Convergence in $\zeta_{r}$-metric implies weak convergence plus convergence of the $r$-th absolute moment. Zolotarev metrics have been
used in \cite{Rao} to obtain a CLT for independent, non-identically distributed random variables. As mentioned in \citet[Section 15]{Rachev}, the case $r=2$ is appropriate for investigating some
ageing properties of lifetime distributions. In \cite{Baillo}, $\zeta_2$ has also been used to generate new distance measures for classifying X-ray astronomy data into stellar classes. The metric $\zeta_3$ has been considered in the context of distributional recurrences (see \cite{Neininger-Ruschendorf-1} and \cite{Neininger-Ruschendorf-2}).

\item[6.] \textsl{Zolotarev metric of order $r$ in $L^p$}: For $r\in \mathbb{N}$, and $1\le p \le \infty$, the metric $\zeta_{r,p}$ is generated by $\mathcal{Z}_{r,p}$, the set of functions $f:\R \longrightarrow\R$ for which $f^{(r+1)}$ exists and satisfies $\Vert f^{(r+1)}\Vert_q\le 1$, where $q$ is the conjugate of $p$. Note that $\zeta_{r,1}\equiv\zeta_{r+1}$ (the Zolotarev ideal metric of order $r+1$).
    In risk theory, the metrics $\zeta_{1,\infty}$ and $\zeta_{1,1}$ are respectively called the \textit{stop-loss distance} and the \textit{integrated stop-loss distance} (see \cite{Denuit et al}).

\item[7.] \textsl{Kernel distances}: When $\mathcal{F} = \{ f : \Vert f \Vert_{\mathcal{H}} \le 1\}$ is the unit ball in a reproducing
kernel Hilbert space $\mathcal{H}$, the associated MMD is called \textit{kernel distance}.

\end{enumerate}

\subsection{An asymptotic result for the MMD over Donsker classes}\label{Subsection.MMD.general}

The use of the empirical counterpart of the MMD was already considered in \cite{Fortet-Mourier} and it has been extensively employed in machine learning when $\mathcal{F}$ is the unit ball in a reproducing kernel Hilbert space (RKHS) (see \cite{Gretton-et-al-2012}). In \cite{Sriperumbudur et al}, the authors showed the consistency and rate of convergence of some estimators of various integral probability metrics. The asymptotic behaviour of an estimator of the Zolotarev metric of order $r$ in $L^p$ has been discussed in \cite{Carcamo}. Here we provide a general result regarding the estimation of the MMD. We only consider the two sample case as this situation is the most frequently considered in the literature, but similar results can be obtained in the one sample case.

Let $X_1,\dots,X_n$ and $Y_1,\dots,Y_m$ be two independent random samples from $X$ and $Y$ with probability measures $\P$ and $\Q$, respectively. We denote by $\P_n$ and $\Q_m$ the empirical measures associated with these samples, that is, $\P_n=n^{-1}\sum_{i=1}^n \delta_{X_i}$ and $\Q_m=m^{-1}\sum_{j=1}^m \delta_{Y_j}$, where $\delta_a$ stands for the Dirac delta at the point $a$. Given a class of functions $\mathcal{F}$, the empirical counterpart of $\text{MMD}[\X,\P,\Q]$ in (\ref{MMD}) is given by
\begin{equation}\label{MMD-estimator}
\text{MMD}[\X,\P_n,\Q_m]=\sup_{f\in\X} \left( \frac{1}{n}\sum_{i=1}^n f(X_i) - \frac{1}{m}\sum_{j=1}^m f(Y_j) \right).
\end{equation}

In this section we are interested in the asymptotic behaviour of the quantity
\begin{equation}\label{Mnm}
M_{m,n}:=\sqrt{N} \left( \text{MMD}[\X,\P_n,\Q_m] -  \text{MMD}[\X,\P,\Q] \right),\quad \text{with}\quad N\equiv\frac{nm}{n+m}.
\end{equation}
We observe that $M_{m,n}$ is precisely $D_{n,m}(\sigma)=D_\sigma(D,\mathbb{D}_{n,m},r_{n,m})$ in (\ref{problem}), where the underlying space is $\X=\X$; the target functional is $D\in \ell^\infty(\X)$ given by
\begin{equation}\label{D-RKHS}
D(f):=\E_\P(f)-\E_\Q (f),\quad f\in\X;
\end{equation}
its estimator is
\begin{equation*}
\mathbb{D}_{n,m}(f):= \E_{\P_n}(f)-\E_{\Q_m} (f)=\frac{1}{n}\sum_{i=1}^n f(X_i) - \frac{1}{m}\sum_{j=1}^m f(Y_j), \quad f\in\X;
\end{equation*}
and $r_{n,m}:=\sqrt{N}$. Therefore, from Theorem \ref{Theorem.main}, to derive the asymptotic distribution of $M_{m,n}$ in (\ref{Mnm}) we only need to study the weak convergence in $\ell^\infty (\X)$ of the process $r_{n,m}(\mathbb{D}_{n,m}-D)=:\mathbb{G}_{n,m}$ given by
\begin{equation}\label{Gnm}
\mathbb{G}_{n,m}:= \sqrt{\frac{m}{n+m}}  \mathbb{G}_{n,\P} - \sqrt{\frac{n}{n+m}}  \mathbb{G}_{m,\Q},
\end{equation}
where
\begin{equation*}
\mathbb{G}_{n,\P}:=  \sqrt{n}(\P_n-\P)\quad \text{and}\quad \mathbb{G}_{m,\Q}:=\sqrt{m}(\Q_m-\Q)
\end{equation*}
are two independent $\X$-indexed empirical processes associated with $\P$ and $\Q$, respectively. In other words, for $f\in\X$, we have that
\begin{equation*}
\mathbb{G}_{n,\P}(f)=n^{-1/2} \sum_{i=1}^n (f(X_i)-\E_\P(f))\quad \text{and}\quad \mathbb{G}_{m,\Q}(f)=m^{-1/2} \sum_{j=1}^m (f(Y_j)-\E_\Q(f)).
\end{equation*}

Given a probability measure $\P$, we recall that a class of functions $\mathcal{F}$ is said to be \textit{$\P$-Donsker} if $\mathbb{G}_{n,\P} \rightsquigarrow \mathbb{G}_\P$ in $\ell^\infty(\X)$, where $\mathbb{G}_\P$ is a $\P$-Brownian bridge, that is, $\{ \mathbb{G}_\P (f) : f\in  \X \}$ is a zero-mean Gaussian process with covariance function
\begin{equation*}
\E  \left[\mathbb{G}_\P (f_1)  \mathbb{G}_\P (f_2)\right] =\E_\P (f_1 f_2)-\E_\P(f_1) \E_\P (f_2),\quad f_1,f_2\in \X.
\end{equation*}
Additionally,  $\mathcal{F}$ is \textit{universal Donsker} if it is \textit{$\P$-Donsker}, for every probability measure $\P$ on the sample space.

We observe that whenever $\mathcal{F}$ is $\P$-Donsker, the process $\mathbb{G}_\P $ can be uniquely extended to the $d_\P$-closure of the symmetric convex hull generated by $\mathcal{F}$ (see \citet[Theorem 3.7.28]{Gine-Nickl}), where $d_\P$ is the intrinsic pseudo-metric on $\mathcal{F}$ defined by
\begin{equation*}
d_\P^2(f,g):= \E(\mathbb{G}_\P(f)-\mathbb{G}_\P(g))^2= {\E_\P (f-g)^2-(\E_\P(f-g))^2},\quad f,g\in \mathcal{F}.
\end{equation*}
To simplify the writing, we will not use a different notation for this extension of $\mathbb{G}_\P$.

We are in position to state the main result in this section that determines the asymptotic distribution of the statistic $M_{n,m}$ in (\ref{Mnm}) over Donsker classes.

\begin{theorem} \label{Theorem.MMD}
Let $X$ and $Y$ be two random variables with probability measures $\P$ and $\Q$, respectively. Let us assume that
\begin{enumerate}
\item[(a)] The sampling scheme is balanced, that is, $n/(n + m)\to\lambda$, with $0 < \lambda < 1$, as $n,m\to\infty$.
\item[(b)] The class $\X$ is simultaneously $\P$ and $\Q$-Donsker.
\end{enumerate}
We consider the metric $d$ on $\mathcal{F}$ given by
\begin{equation}\label{metric.d.sum}
d(f,g):= \sqrt{\E_\P(f-g)^2} + \sqrt{\E_\Q (f-g)^2}, \quad f,g\in\mathcal{F}.
\end{equation}
We have that $(\mathcal{F},d)$ is a totally bounded metric space, the function $D$ in (\ref{D-RKHS}) belongs to $\mathcal{C}_u(\mathcal{F},d)$ and the statistic $M_{n,m}$ defined in (\ref{Mnm}) satisfies that
\begin{equation*}
M_{n,m}\cd \sup_{\bar M^+ (D,d)}   \mathbb{G},
\end{equation*}
where $\mathbb{G}:=\sqrt{1-\lambda} \mathbb{G}_\P-\sqrt{\lambda} \mathbb{G}_\Q $ is a zero-mean Gaussian process with $\mathbb{G}_\P$ and $\mathbb{G}_\Q$ two independent $\X$-indexed Brownian bridges associated with $\P$ and $\Q$, respectively, and
\begin{equation*}
\bar M^+ (D,d):=\left\{   f\in (\bar{\mathcal{F}},d) : \E_\P (f)-\E_\Q (f) = \text{\rm MMD}[\X,\P,\Q]      \right\}
\end{equation*}
with $\bar{\X}$ being the $d$-completion of $\X$.
\end{theorem}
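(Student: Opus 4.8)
The plan is to recognize $M_{n,m}$ as the canonical Delta-method statistic $D_\sigma(q,\mathbb{Q}_n,r_n)$ of (\ref{problem}) with $q=D$, $\mathbb{Q}_{n}=\mathbb{D}_{n,m}$ and $r_{n,m}=\sqrt N$, and then to invoke Theorem \ref{Theorem.main} together with the totally-bounded-space derivative formula in Corollary \ref{Corollary.Totally.Compact}(b). Accordingly, the proof splits into two essentially independent tasks: first, the purely analytic verification that $(\mathcal{F},d)$ is totally bounded and that $D\in\mathcal{C}_u(\mathcal{F},d)\setminus\{0\}$, so that Corollary \ref{Corollary.Totally.Compact}(b) applies with tangent set $\mathcal{C}_u(\mathcal{F},d)$; and second, the probabilistic verification that the normalized process $\mathbb{G}_{n,m}$ of (\ref{Gnm}) converges weakly in $\ell^\infty(\mathcal{F})$ to $\mathbb{G}$ and that $\mathbb{G}$ takes values in that tangent set almost surely.

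For the analytic part, I would first record the two auxiliary pseudo-metrics $\rho_{\P}(f,g):=\sqrt{\E_\P(f-g)^2}$ and $\rho_{\Q}(f,g):=\sqrt{\E_\Q(f-g)^2}$, so that $d=\rho_\P+\rho_\Q$, together with the intrinsic covariance pseudo-metric $d_\P$ defined just before the theorem, which satisfies the identity $\rho_\P^2=d_\P^2+(\E_\P f-\E_\P g)^2$. Since $\mathcal{F}$ is $\P$-Donsker, the limit $\mathbb{G}_\P$ is a tight Gaussian element of $\ell^\infty(\mathcal{F})$, so by \citet[Lemma 1.5.9]{van der Vaart-Wellner} (invoked at the start of Section \ref{Subsection.Totally.bounded}) $\mathcal{F}$ is totally bounded for $d_\P$. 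The key extra observation is that symmetry of $\mathcal{F}$ together with $\sup_{f\in\mathcal{F}}\E_\P(f)<\infty$ forces the image $\{\E_\P f:f\in\mathcal{F}\}$ to be a bounded, hence totally bounded, subset of $\R$, so the pseudo-metric $(f,g)\mapsto|\E_\P f-\E_\P g|$ is totally bounded as well. Because a sum of two totally bounded pseudo-metrics is totally bounded and $\rho_\P\le d_\P+|\E_\P f-\E_\P g|$, the metric $\rho_\P$ is totally bounded; the same argument gives $\rho_\Q$, and hence $d=\rho_\P+\rho_\Q$, totally bounded. Uniform continuity of $D$ is then immediate from $|D(f)-D(g)|\le|\E_\P(f-g)|+|\E_\Q(f-g)|\le\rho_\P(f,g)+\rho_\Q(f,g)=d(f,g)$ (the middle step by Cauchy--Schwarz), so $D$ is $1$-Lipschitz and bounded, i.e. $D\in\mathcal{C}_u(\mathcal{F},d)$; assuming $\P\ne\Q$ yields $D\ne0$, while the degenerate case $D\equiv0$ is covered by the observation following Theorem \ref{Theorem.main}.

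For the probabilistic part, since $\mathbb{G}_{n,\P}\rightsquigarrow\mathbb{G}_\P$ and $\mathbb{G}_{m,\Q}\rightsquigarrow\mathbb{G}_\Q$ with the two samples independent, the pair converges jointly to $(\mathbb{G}_\P,\mathbb{G}_\Q)$ with independent coordinates; combining this with the convergence of the deterministic weights $\sqrt{m/(n+m)}\to\sqrt{1-\lambda}$ and $\sqrt{n/(n+m)}\to\sqrt{\lambda}$ and the continuous mapping theorem gives $\mathbb{G}_{n,m}\rightsquigarrow\mathbb{G}=\sqrt{1-\lambda}\,\mathbb{G}_\P-\sqrt\lambda\,\mathbb{G}_\Q$ in $\ell^\infty(\mathcal{F})$. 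Because $d\ge d_\P$ and $d\ge d_\Q$, the $d_\P$- and $d_\Q$-uniformly continuous sample paths of $\mathbb{G}_\P$ and $\mathbb{G}_\Q$ are $d$-uniformly continuous, so $\mathbb{G}$ has paths in $\mathcal{C}_u(\mathcal{F},d)$ almost surely and therefore lies in the required tangent set. Applying Theorem \ref{Theorem.main} (equivalently, Proposition \ref{Proposition.delta} with $\phi=\sigma$) and reading off the derivative from Corollary \ref{Corollary.Totally.Compact}(b) yields $M_{n,m}\cd\sigma'_D(\mathbb{G})=\sup_{\bar M^+(D)}\bar{\mathbb{G}}$, where $\bar{\mathbb{G}}$ is the unique $d$-continuous extension of $\mathbb{G}$ to $\bar{\mathcal{F}}$ and $\bar M^+(D)=\{f\in\bar{\mathcal{F}}:\bar D(f)=\sup_{\mathcal{F}}D\}$. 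Since $\sup_{\mathcal{F}}D=\text{\rm MMD}[\mathcal{F},\P,\Q]$ by definition and $\bar D(f)=\E_\P(f)-\E_\Q(f)$ on $\bar{\mathcal{F}}$, the set $\bar M^+(D)$ coincides with $\bar M^+(D,d)$ in the statement, which finishes the argument.

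I expect the main obstacle to be the total-boundedness step, more precisely the passage from total boundedness in the intrinsic covariance pseudo-metric $d_\P$ (which is what the Donsker property directly supplies) to total boundedness in the uncentered $L^2(\P)$ metric $\rho_\P$ appearing in $d$. This is genuinely non-trivial: a Donsker class need not be $L^2(\P)$-totally bounded without a control on its means, and it is exactly the symmetry of $\mathcal{F}$ together with the standing finiteness assumption $\sup_{f\in\mathcal{F}}\E_\P(f)<\infty$ that rescues the argument. Everything else---uniform continuity of $D$, the weak convergence of $\mathbb{G}_{n,m}$, and the identification of the derivative---is a routine consequence of the earlier results.
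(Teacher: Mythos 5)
Your proposal is correct and follows essentially the same route as the paper: identify $M_{n,m}$ as $D_\sigma(D,\mathbb{D}_{n,m},\sqrt{N})$, verify total boundedness of $(\mathcal{F},d)$ and $D\in\mathcal{C}_u(\mathcal{F},d)$, establish $\mathbb{G}_{n,m}\rightsquigarrow\mathbb{G}$ with paths in $\mathcal{C}_u(\mathcal{F},d)$, and conclude via Theorem \ref{Theorem.main} and Corollary \ref{Corollary.Totally.Compact}(b). The only difference is that where the paper passes from $d_\P$-total boundedness to $L^2(\P)$-total boundedness by citing the ideas of \citet[Theorem 3.7.40]{Gine-Nickl}, you make that step explicit via the decomposition $\rho_\P^2=d_\P^2+(\E_\P f-\E_\P g)^2$ and the boundedness of the means (which, as you note, follows from symmetry of $\mathcal{F}$ and the standing integrability assumption) --- a welcome, self-contained substitute for the citation, and you are also slightly more careful than the paper in covering the degenerate case $D\equiv 0$.
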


\begin{proof}
First, from (a) and (b) we have that $\mathbb{G}_{n,m} \cd  \mathbb{G}$, where $\mathbb{G}_{n,m} $ is in (\ref{Gnm}). Hence, by Theorem \ref{Theorem.main}, $M_{n,m}\cd \sigma_D^\prime (\mathbb{G})$.
Now, as $\X$ is $\P$ and $\Q$-Donsker, the pseudo-metric spaces $(\X,d_\P)$ and $(\X,d_\Q)$ are totally bounded, where $d_\P$ and $d_\Q$ are the natural pseudo-metrics given by
$d_{\text{S}}^2(f,g):= {\E_\text{S} (f-g)^2-(\E_\text{S}(f-g))^2}$, for $\text{S}\in\{ \P,\Q\}$ and $f,g\in \mathcal{F}$ (see \citet[Remark 3.7.27]{Gine-Nickl}). Further,  $\mathbb{G}_\P\in \mathcal{C}_u(\X,d_\P)$ and $\mathbb{G}_\Q\in \mathcal{C}_u(\X,d_\Q)$ a.s. Now, as the class $\X$ is bounded in $L^1(\P)$ and $L^1(\Q)$ (i.e., $\sup_{f\in\mathcal{F}} | \E_\P (f)|, \sup_{f\in\mathcal{F}} | \E_\Q (f)| <\infty$) and $(\X,d_\P)$, $(\X,d_\Q)$ are totally bounded, using the same ideas as in the proof of \citet[Theorem 3.7.40, p. 262]{Gine-Nickl} we conclude that $(\X,d_{L^2(\P)})$ and $(\X,d_{L^2(\Q)})$ are also totally bounded, where  $d_{L^2(\text{S})}^2(f,g):=\E_\text{S}(f-g)^2$ ($f,g\in \mathcal{F}$ and $\text{S}\in\{ \P,\Q\}$). It is easy to check that this implies that $(\X,d)$ is totally bounded, where $d$ is in (\ref{metric.d.sum}). On the other hand, by Cauchy-Schwarz inequality, we have that $|D(f)-D(g)|\le d(f,g)$ and hence $D\in \mathcal{C}_u(\X,d)$.
Further, the paths of $\mathbb{G}$ are in $\mathcal{C}_u(\mathcal{F},d)$ a.s. since $d_\P,d_\Q\le d$. Therefore, the conclusion follows by applying Corollary \ref{Corollary.Totally.Compact} (b).
\end{proof}

Condition (b) in Theorem \ref{Theorem.MMD} is the key assumption that has to be checked to apply the previous result. In other words, we have to ensure that $\X$ is $\P$ and $\Q$-Donsker. There are many results in the literature on empirical proceses guaranteeing that a class of functions is Donsker (see \cite{van der Vaart-Wellner}). For instance, it is well-known that the set of indicators generating the Kolmogorov distance is universal Donsker. The unit ball for the Bounded Lipschitz metric is $\P$-Donsker whenever $\P$ has some finite moments (see \citet[Corollary 5 and Remark 2]{Nickl-Potscher}). In the same work, \cite{Nickl-Potscher} showed that bounded subsets of general function spaces defined over $\R^d$ are Donsker under some appropriate conditions on the underlying probability measure. Examples include (weighted) Besov, Sobolev, H\"{o}lder, and Triebel type spaces. Some of these results have been extended in \cite{Sriperumbudur-2016}.

\section*{Acknowledgements}
This research has been supported by the Spanish MCyT grant MTM2016-78751-P. We thank Holger Dette for pointing out to us the reference \cite{Dette-Kokot-Aue}. The first author thanks Carlos Mora-Corral (Department of Mathematics, Universidad Aut\'onoma de Madrid) for showing to him the counterexample in Section \ref{Subsection.Weakly.compact} that proves that the linear extension $\tilde g$ of a function $g\in \mathcal{C}_{pl}(\X,d_{\mathcal{B}})$ is not necessarily continuous. We are also indebted to the reviewers, AE and Editor for their appropriate and constructive suggestions and the references \cite{Beare-Moon}, \cite{Kaido}, \cite{Seo} and \cite{Beare-Shi}. Their comments have led to a improved version of the original manuscript.

\end{document}